\setlist[enumerate]{leftmargin=*}
\newenvironment{acknowledgements}{\section*{Acknowledgements}}{}
\newtheorem{theorem}{Theorem}[section] 
\newtheorem{lemma}[theorem]{Lemma}     
\newtheorem{corollary}[theorem]{Corollary}
\newtheorem{proposition}[theorem]{Proposition}
\theoremstyle{definition}
\newtheorem{definition}[theorem]{Definition}
\newtheorem{remark}[theorem]{Remark}
\newtheorem{example}[theorem]{Example}
\numberwithin{equation}{section}
\definecolor{hypercolor}{rgb}{0.5,0,0.5}
\definecolor{MK_One_Five}{RGB}{90,180,172}
\newcommand\cC{\mathcal{C}}
\newcommand\cH{\mathcal{H}}
\newcommand\cL{\mathcal{L}}
\newcommand\cO{\mathcal{O}}
\newcommand\cS{\mathcal{S}}
\newcommand\cT{\mathcal{T}}
\renewcommand\AA{\mathbb{A}}
\newcommand\CC{\mathbb{C}}
\newcommand\HH{\mathbb{H}}
\newcommand\PP{\mathbb{P}}
\newcommand\QQ{\mathbb{Q}}
\newcommand\RR{\mathbb{R}}
\newcommand\ZZ{\mathbb{Z}}
\newcommand\bD{\mathbf{D}}
\newcommand\bE{\mathbf{E}}
\newcommand\bP{\mathbf{P}}
\newcommand\bT{\mathbf{T}}
\newcommand\bY{\mathbf{Y}}
\newcommand\rH{\mathrm{H}}
\newcommand\rI{\mathrm{I}}
\newcommand\rO{\mathrm{O}}
\newcommand\rP{\mathrm{P}}
\newcommand\rT{\mathrm{T}}
\newcommand\rmd{\mathrm{d}}
\newcommand\bfe{\mathbf{e}}
\newcommand\bfg{\mathbf{g}}
\newcommand\bfu{\mathbf{u}}
\newcommand\bfv{\mathbf{v}}
\newcommand\bfx{\mathbf{x}}
\newcommand\fre{\mathfrak{e}}
\newcommand\frn{\mathfrak{n}}
\newcommand\frq{\mathfrak{q}}
\newcommand\tA{\tilde{A}}
\newcommand\tB{\tilde{B}}
\newcommand\btg{\widetilde{\bfg}}
\newcommand\rank{{\rm rank}}
\newcommand\Pic{{\rm Pic}}
\newcommand\Cl{{\rm Cl}}
\newcommand\SL{{\rm SL}}
\newcommand\GL{{\rm GL}}
\newcommand\Mp{{\rm Mp}}
\newcommand\Sh{{\rm Sh}}
\newcommand\Proj{{\rm Proj}}
\newcommand\node{{\rm node}}
\newcommand\sym{{\rm Sym}}
\newcommand\vol{{\rm vol}}
\newcommand{\diag}{ \mathtt{diag}}
\newcommand{\scF}{{\mathscr{F}}}
\newcommand{\scM}{{\mathscr{M}}}
\newcommand{\Cusp}{{\mathrm{Cusp}}}
\newcommand{\Mod}{{\mathrm{Mod}}}
\newcommand{\Sp}{{\mathrm{Sp}}}
\newcommand{\tg}{\tilde{g}}
\newcommand{\sign}{\mathrm{sign}}
\newcommand{\tr}{\mathrm{tr}}
\newcommand{\Gen}{\mathbf{Gen}}
\newcommand{\MU}{\left(\begin{array}{cc}\rI_d & B \\0 & \rI_d\end{array}\right)}
\newcommand{\MJ}{\left(\begin{array}{cc}0 & -\rI_d \\ \rI_d & 0\end{array}\right)}
\newcommand{\MP}{\left(\begin{array}{cc}U & 0 \\0 & (U^{-1})^\top\end{array}\right)}
\DeclarePairedDelimiterX\set[1]\lbrace\rbrace{#1}
\title[Picard group of Baily--Borel compactifications]{The Picard group of the Baily--Borel compactification of the moduli space of quasi-polarized K3 surfaces and generalizations} 
\author[C.Huang, Z.Li, M. K.-H. M\"uller, Z. Ye]{Chenxin Huang, Zhiyuan Li, Manuel K.-H. M\"uller and Zelin Ye}
\address{$^{1}$ School of Mathematical Sciences, Fudan University, Shanghai, China.}
\email{\textcolor{MK_One_Five}{chuang@math.harvard.edu}}
\address{$^{2}$ Shanghai Center for Mathematical Science, Shanghai, China.}
\email{\textcolor{MK_One_Five}{zhiyuan\underline{ }li@fudan.edu.cn}}
\address{$^{3}$ Department of Mathematics, Technical University of Darmstadt, Berlin, Germany.}
\email{\textcolor{MK_One_Five}{mmueller@mathematik.tu-darmstadt.de}}
\address{$^{4}$ Shanghai Center for Mathematical Science, Shanghai, China.}
\email{\textcolor{MK_One_Five}{zlye24@m.fudan.edu.cn}}
\subjclass[2020]{14J28,14J15,11F27,11F37 (primary), 14G35 (secondary)}
\thanks{Zhiyuan Li is supported by NSFC grant (12121001, 12171090 and 12425105) and Shanghai Pilot Program for Basic Research (No. 21TQ00). Zhiyuan is also a member of LMNS.  Manuel K.-H.\ Müller acknowledges support by Deutsche Forschungsgemeinschaft (DFG, German Research Foundation) through the Collaborative Research Centre TRR 326 Geometry and Arithmetic of Uniformized Structures, project number 444845124 as well as support by the LOEWE professorship in Algebra, project number LOEWE/4b//519/05/01.002(0004)/87. Zelin Ye is supported by the NKRD Program of China (No. 2020YFA0713200).}
\gdef\@journal{}
\begin{document}
\maketitle

\begin{abstract}
In this paper, we investigate the Picard group of the Baily--Borel compactification of orthogonal Shimura varieties. As a key result, we determine the Picard group of the Baily--Borel compactification of the moduli space of quasi-polarized K3 surfaces, proving that it is isomorphic to $\mathbb{Z}$. Notably, this contrasts with the moduli space of smooth curves, where the Picard group exhibits a more complex structure after natural compactification.

Our result follows from a general theorem for orthogonal Shimura varieties: for even lattices $M$ of signature $(2,n)$ with $n > 8$ satisfying specific arithmetic conditions (e.g. K3 type or $p$-elementary), the rational Picard group of $\overline{\operatorname{Sh}}_\Gamma(M)$ with $\Gamma$ containing the stable orthogonal group is $1$-dimensional. The core of our proof lies in constructing an arithmetic obstruction space that governs the extension of Heegner divisors to the generic points of the boundary in orthogonal Shimura varieties. We further establish a connection between this obstruction space and the space of theta series, demonstrating that the obstruction space is maximal under our conditions.
\end{abstract}

\section{Introduction}

\subsection{Picard group of moduli space of quasi-polarized K3 surfaces}The study of the Picard group of moduli spaces stands as a vital and inspiring challenge in moduli theory, rooted in the groundbreaking contributions of Mumford \cite{Mum65} in 1965. Let $\scM_g$ be the moduli space of smooth curves of genus $g\geq 3$ and let $\overline{\scM}_g$ be its Deligne--Mumford compactification.  Harer \cite{Har83} and Arbarello--Cornalba \cite{AC87} showed that 
\begin{equation}
    \Pic(\scM_g)\cong \ZZ~\hbox{and}~\Pic(\overline{\scM}_g)\cong \ZZ^{2+\left[\frac{g}{2}\right]}
\end{equation}
are free abelian groups.

For K3 surfaces, let $\scF_g$ be the (coarse) moduli space of quasi-polarized K3 surfaces of genus $g\geq 2$. By the global Torelli theorem, $\mathscr{F}_g$ is realized as an orthogonal Shimura variety, whose rich geometric structure has been extensively studied. Recent breakthroughs \cites{BLMM17,DFV24} established that its Picard group $\Pic(\mathscr{F}_g) \cong \mathbb{Z}^{r_g}$ is freely generated by linear combinations of Noether--Lefschetz divisors, with rank given by the intricate formula:
\begin{align*}
r_g = \frac{31g+24}{24} & - \frac{1}{4} \left( \frac{2g-2}{2g-3}\right) \left( \frac{g}{2}\right) - \frac{1}{6}\left( \frac{g-1}{4g-5}\right) \\
& - \frac{1}{6}(-1)^{-\left( \frac{g-1}{3}\right)} - \sum_{k=0}^{g-1} \left\{ \frac{k^2}{4g-4} \right\} - \# \left\{ k \mid \frac{k^2}{4g-4} \in \mathbb{Z},\ 0 \leq k \leq g-1 \right\}
\end{align*}
where $\left( \frac{\cdot}{\cdot} \right)$ denotes the Jacobi symbol.  As a locally symmetric variety, $\scF_g$ has a Baily--Borel compactification by the works of Satake, Baily, and Borel, denoted as $\overline{\scF}_g$.  A natural problem is to figure out the Picard group of $\overline{\scF}_g$. Since $\overline{\scF}_g$ is a highly singular variety (with canonical singularities), one expects that the Picard rank of $\overline{\scF}_g$ is relatively small. The main result of this paper establishes that the Picard rank of $\overline{\mathscr{F}}_g$ is surprisingly minimal—in fact, the smallest possible.
\begin{theorem}\label{thm:mainthm}
    $\Pic(\overline{\scF}_g)\cong \ZZ$, which is spanned by some multiple of the extended Hodge line bundle $\overline{\lambda}$.
\end{theorem}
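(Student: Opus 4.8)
The plan is to deduce Theorem~\ref{thm:mainthm} from the general rational Picard rank statement for orthogonal Shimura varieties quoted in the abstract, together with an elementary integral refinement. By the global Torelli theorem the period map realizes $\scF_g$ as a Zariski-open subvariety of the orthogonal Shimura variety $\Sh_{\Gamma}(M_g)$, where $M_g=\langle 2g-2\rangle^{\perp}$ inside the K3 lattice is an even lattice of signature $(2,19)$, of K3 type, and $\Gamma=\widetilde{O}^{+}(M_g)$ is the stable orthogonal group; the complement $\Sh_{\Gamma}(M_g)\setminus\scF_g$ is a union of Heegner divisors (the unigonal and hyperelliptic loci). The Baily--Borel compactification in question is $\overline{\scF}_g=\overline{\Sh}_{\Gamma}(M_g)=\Proj\bigoplus_{k\ge 0}H^{0}(\lambda^{\otimes k})$, so a sufficiently divisible power of the extended Hodge $\QQ$-line bundle $\overline{\lambda}$ is an honest ample line bundle on $\overline{\scF}_g$.

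Since $M_g$ has signature $(2,n)$ with $n=19>8$, is of K3 type, and $\Gamma$ contains the stable orthogonal group, $M_g$ satisfies the hypotheses of the general theorem, which therefore gives $\dim_{\QQ}\bigl(\Pic(\overline{\scF}_g)\otimes\QQ\bigr)=1$. To upgrade this to the integral statement, observe that the Baily--Borel boundary $\overline{\scF}_g\setminus\Sh_{\Gamma}(M_g)$ consists of modular curves and points, hence has codimension $19-1=18\ge 2$; since $\overline{\scF}_g$ is normal, every line bundle is reflexive and equals the pushforward of its restriction to $\Sh_{\Gamma}(M_g)$, so the restriction map $\Pic(\overline{\scF}_g)\hookrightarrow\Pic(\Sh_{\Gamma}(M_g))$ is injective. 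By \cite{BLMM17,DFV24} and the Noether--Lefschetz results underlying them, the Picard groups of these orthogonal modular varieties are finitely generated and free (generated by Heegner divisor classes), so $\Pic(\overline{\scF}_g)$, being a subgroup of a free abelian group, is free; combined with the rank computation it is $\cong\ZZ$. Finally $\overline{\lambda}$ is ample, hence a nonzero element of $\Pic(\overline{\scF}_g)$, so some positive integer multiple of it is a generator, which is Theorem~\ref{thm:mainthm}.

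All of the real work lies in the general theorem invoked above: constructing the arithmetic obstruction space that measures the failure of a Heegner divisor to extend to a $\QQ$-Cartier divisor across the generic points of the boundary, and proving, via its identification with a space of theta series, that this obstruction space is as large as possible, so that up to $\QQ$-coefficients only $\overline{\lambda}$ survives in $\Pic(\overline{\scF}_g)$. Relative to that input the present deduction is formal; the only K3-specific points to check are that $M_g$ falls into the ``K3 type'' case of the general theorem for \emph{every} $g\ge 2$ (its discriminant form is the cyclic form on $\ZZ/(2g-2)\ZZ$ and $n=19$ independently of $g$, so the arithmetic condition holds uniformly), and the routine verification that neither passing between $\Sh_{\Gamma}(M_g)$ and the moduli space $\scF_g$ nor passing from $\QQ$- to $\ZZ$-coefficients affects the conclusion.
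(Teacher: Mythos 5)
Your deduction is correct and is essentially the paper's own route: identify $\overline{\scF}_g$ with $\overline{\Sh}_{\widetilde{\rO}(\Lambda_g)}(\Lambda_g)$, apply Theorem \ref{thm:mainthm'} (via Corollary \ref{cor:mainthm2}) for the rational rank-one statement, and upgrade to $\ZZ$ using that the Baily--Borel boundary has codimension $\geq 2$ together with freeness of $\Pic(\Sh(\Lambda_g))$; the paper performs this last step through Theorem \ref{thm:torsion-free} and Corollary \ref{cor:mainthm3}, while your direct appeal to \cite{DFV24} is a legitimate shortcut for this particular lattice. One correction: since $\scF_g$ parametrizes \emph{quasi-polarized} K3 surfaces, the period map is an isomorphism $\scF_g\cong\Sh(\Lambda_g)$ onto the whole Shimura variety (Example \ref{ex:Fg}), not an open immersion whose complement is a union of Heegner divisors --- that description pertains to the polarized locus; your argument is unaffected only because you identify the compactifications and restrict line bundles to the full Shimura variety rather than to $\scF_g$ itself.
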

This demonstrates that despite the complexity of $\Pic(\scF_g)$, the Baily-Borel compactification exhibits extreme rigidity in its theory of Cartier divisors.

\subsection{Picard group of orthogonal Shimura varieties}
Our approach establishes a general method for computing Picard groups of Baily--Borel compactifications of orthogonal Shimura varieties. Let $M$ be a lattice of rank $\geq 5$ and $\Sh_\Gamma(M): = \Gamma \backslash \bD$ the associated orthogonal Shimura variety, where $\bD$ is the Hermitian symmetric domain attached to $M$ and $\Gamma \leq \mathrm{O}(M)$ is a congruence subgroup. Denote by $\overline{\Sh}_\Gamma(M)$ or $(\Gamma\backslash \bD)^{\rm BB}$ its Baily--Borel compactification. In most situations, we may assume $\Gamma$ is contained in the stable orthogonal group $\widetilde{\rO}(M)$. 

According to \cite[Thm. 1.5]{BLMM17}, the rational Picard group $\operatorname{Pic}_{\mathbb{Q}}(\Sh_\Gamma(M))$ is spanned by irreducible components of Heegner divisors on $\Sh_\Gamma(M)$. We define the Heegner subspace $\operatorname{Pic}_{\mathbb{Q}}(\Sh_\Gamma(M))^{\mathrm{Heegner}}$ as the $\mathbb{Q}$-subspace generated by these divisors, and introduce the subgroup
\[
\operatorname{Pic}(\overline{\Sh}_\Gamma(M))^{\mathrm{Heegner}} \subseteq \operatorname{Pic}(\overline{\Sh}_\Gamma(M))
\]
consisting of Cartier divisors whose restrictions to $X$ lie in $\operatorname{Pic}_{\mathbb{Q}}(X)^{\mathrm{Heegner}}$.

In this paper, we investigate the problem when elements of $\operatorname{Pic}_\mathbb{Q}(\overline{\Sh}_\Gamma(M))^{\mathrm{Heegner}}$ extend to $\overline{\Sh}_\Gamma(M)$ and explicitly compute $\operatorname{Pic}(\overline{\Sh}_\Gamma(M))^{\mathrm{Heegner}}$. Building on Bruinier--Freitag's analysis of Heegner divisor extensions \cite{BF01}, we use theta series and their admissible isotropic lifts to  construct an obstruction subspace  within the space of cuspidal forms. Under some mild arithmetic conditions on $M$, we prove that this subspace is maximal.

Our main result is encapsulated in the following theorem:

\begin{theorem}[Cor. \ref{cor:mainthm2} and \ref{cor:mainthm3}]\label{thm:mainthm'}
    Let $M$ be an even lattice of signature $(2,n)$  with $n>8$ satisfying
    \begin{itemize}
        \item $M$ splits two hyperbolic planes 
        \item $M\otimes \ZZ_p$ splits three hyperbolic planes for all prime $p$. 
    \end{itemize} For any congruence subgroup $\Gamma$ of $\widetilde{\rO}(M) $,  we have  $$ \Pic_\QQ\left(\overline{\Sh}_\Gamma(M)\right)^{\rm Heegner}\cong \QQ,$$ 
  is spanned by some multiple of the extended Hodge line bundle $\overline{\lambda}$. Moreover, if $\Gamma=\widetilde{\rO}(M)$, then $$\Pic\left(\overline{\Sh}_\Gamma(M)\right)^{\rm Heegner}=\Pic\left(\overline{\Sh}_\Gamma(M)\right) \cong \ZZ.$$
\end{theorem}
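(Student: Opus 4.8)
The plan is to reduce the assertion to a boundary-extension problem for Heegner divisors, and then settle it using the theory of Borcherds products together with the local analysis of \cite{BF01}. Since $M$ has signature $(2,n)$ with $n>8$, every proper rational boundary component of $\overline{\Sh}_\Gamma(M)$ is a point or a modular curve, so the boundary has codimension $\geq n-1\geq 2$; as $\overline{\Sh}_\Gamma(M)$ is normal, Zariski closure identifies Weil divisors on $\Sh_\Gamma(M)$ with those on $\overline{\Sh}_\Gamma(M)$, and the restriction $\Pic(\overline{\Sh}_\Gamma(M))\hookrightarrow\Pic(\Sh_\Gamma(M))$ is injective. The extended Hodge bundle $\overline\lambda$ is ample by Baily--Borel and $\lambda$ is itself a rational combination of Heegner divisors (via the Eisenstein series), so $\QQ\overline\lambda\subseteq\Pic_\QQ(\overline{\Sh}_\Gamma(M))^{\rm Heegner}$; moreover $\Sh_\Gamma(M)$ has finite quotient singularities, hence is $\QQ$-factorial, so $\QQ$-Cartierness of the closure $\overline D$ of a Heegner $\QQ$-divisor $D$ is a condition purely along the boundary. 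Thus the first statement reduces to: \emph{if $\overline D$ is $\QQ$-Cartier on $\overline{\Sh}_\Gamma(M)$, then $D\in\QQ\lambda$ in $\Pic_\QQ(\Sh_\Gamma(M))$.}

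I would then localise at the generic point $\eta_J$ of the one-dimensional boundary stratum $B_J$ attached to an isotropic plane $J\subseteq M$ — such $J$ exists because $M$ splits two hyperbolic planes, and the hypothesis that $M\otimes\ZZ_p$ splits three hyperbolic planes for all $p$ controls the rank-$(n-2)$ negative definite lattice $K_J=J^\perp/J$. Adapting Bruinier--Freitag \cite{BF01} — in the refined form using admissible isotropic lifts — one describes the completed local ring of $\overline{\Sh}_\Gamma(M)$ at $\eta_J$ and the local equations of the Heegner divisors in terms of modular data for $K_J$, and extracts the criterion: $\overline D$ is $\QQ$-Cartier at $\eta_J$ if and only if the principal part $c_D$ of the input form of $D$ is orthogonal to the image of an explicit theta lift $\Theta_J$ into $S_{1+n/2}(\rho_M)$, the space of vector-valued cusp forms for the Weil representation attached to $M$. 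Setting $\mathrm{Obs}:=\sum_J\operatorname{im}(\Theta_J)\subseteq S_{1+n/2}(\rho_M)$ — the arithmetic obstruction space — we conclude that $\overline D$ being $\QQ$-Cartier forces $c_D\perp\mathrm{Obs}$.

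The technical core, and the step I expect to be hardest, is to show that the obstruction space is \emph{maximal}, i.e.\ $\mathrm{Obs}=S_{1+n/2}(\rho_M)$, under the stated hypotheses. The plan is to identify the $\Theta_J$ as isotropic theta lifts and to establish their joint surjectivity by an adjoint argument: a cusp form $g$ orthogonal to $\mathrm{Obs}$ has vanishing lift $\Theta_J^\ast g$ for every isotropic plane $J$, and one must deduce $g=0$. This is precisely where $n>8$ is used (convergence and the non-vanishing of the archimedean component of the relevant theta integral, i.e.\ being in the stable range) and where the splitting of three hyperbolic planes in $M\otimes\ZZ_p$ for every $p$ is used (it makes the finite-place components of these lifts nonzero, removing all local obstructions in the associated see-saw/doubling identity); one then computes the Petersson norm of $g$ by the doubling method and reads it off from the $\Theta_J$ along the rank-one cusps, obtaining $g=0$. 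Corollaries~\ref{cor:mainthm2} and \ref{cor:mainthm3} are the K3-type and $p$-elementary cases in which these local conditions are checked explicitly.

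Granting maximality, $c_D\perp S_{1+n/2}(\rho_M)$, so by Borcherds' obstruction theorem $c_D$ is the principal part of a weakly holomorphic form of weight $1-n/2$ for $\rho_M^\vee$, and the associated Borcherds product — a meromorphic section of a power of $\overline\lambda$ that automatically extends across the codimension-$\geq2$ boundary — shows $D\in\QQ\lambda$; hence $\Pic_\QQ(\overline{\Sh}_\Gamma(M))^{\rm Heegner}=\QQ\overline\lambda$. For the remaining assertion, when $\Gamma=\widetilde{\rO}(M)$ the group $\Pic_\QQ(\Sh_\Gamma(M))$ is spanned by Heegner divisors by \cite[Thm.~1.5]{BLMM17}, so it coincides with its Heegner subspace; by injectivity of restriction the same holds on $\overline{\Sh}_\Gamma(M)$, whence $\Pic_\QQ(\overline{\Sh}_\Gamma(M))\cong\QQ$. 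Finally $\Pic(\overline{\Sh}_\Gamma(M))$ is torsion-free — it embeds into $\Pic(\Sh_\Gamma(M))$, which is torsion-free for $\widetilde{\rO}(M)$ with $M$ of this type — so $\Pic(\overline{\Sh}_\Gamma(M))\cong\ZZ$, spanned over $\QQ$ by $\overline\lambda$.
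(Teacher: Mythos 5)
Your skeleton coincides with the paper's: reduce $\QQ$-Cartierness of a Heegner class to triviality at generic points of the strata $\partial_J$, invoke the Bruinier--Freitag local criterion to force orthogonality of the coefficient functional to isotropically lifted theta series attached to admissible isotropic planes, and conclude proportionality to $\lambda$ once the obstruction space is all of $\Cusp_{1+n/2}(\rho_M^*)$, finishing with the span-by-Heegner-divisors result of \cite{BLMM17} and torsion-freeness. The genuine gap is exactly the step you single out as the core, the maximality of the obstruction space, which you only sketch. Two things have to be proved there. First, that the lattices $J^\perp/J$ over admissible $J$ (together with the $\rO(H_J^\perp/H_J)$-twists) exhaust an entire genus $\Gen_{0,n-2}(G_M)$; the paper gets this from Nikulin's uniqueness theorem (using that $M$ splits two hyperbolic planes) and the surjectivity of $\rO(M)\to\rO(G_M)$, and your write-up never supplies this identification, without which ``orthogonal to all $\Theta_J$'' is weaker than ``orthogonal to all theta series of the genus.'' Second, the basis problem for that genus: $\Cusp^\theta=\Cusp$ in weight $1+n/2=\tfrac{n-2}{2}+2$. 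Your proposed doubling/see-saw argument (``stable range at infinity, local nonvanishing at finite places'') only handles, in effect, the part of the Dirichlet series of Hecke operators coprime to the level, where absolute convergence of $L(f,s)$ for $\mathrm{Re}(s)>k+1$ gives nonvanishing. At primes dividing the level the relevant Hecke algebra is non-commutative, there is no Euler factor whose nonvanishing can be read off local zeta integrals, and this is precisely where the hypothesis that $M\otimes\ZZ_p$ splits three hyperbolic planes (equivalently, that $L=J^\perp/J$ splits a hyperbolic plane over every $\ZZ_p$) must enter in an essential, non-formal way: Remark \ref{rem:localConditionNecessary} of the paper exhibits lattices violating it for which the surjectivity is simply false (newform obstructions). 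The paper's proof handles the bad primes with a concrete device --- comparison of Bruinier--Stein with scalar Hecke operators, Proposition \ref{prop:hecke-vanish}, and an auxiliary overlattice with $G_{L'}\cong G_M\oplus G'$ followed by isotropic descent --- and your sketch contains no substitute for it; as written, ``removing all local obstructions in the see-saw identity'' is an assertion, not an argument.

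A secondary, smaller gap: the integral statement needs $\Pic(\Sh(M))$ torsion-free, which you assert as known ``for $\widetilde{\rO}(M)$ with $M$ of this type.'' This is itself a theorem requiring proof (the paper's Theorem \ref{thm:torsion-free}, via the abelianization of $\widetilde{\rO}(M)$, the stack Picard group, and restriction to $\Sh(U^{\oplus 2})\cong\AA^2$); it should be flagged as a nontrivial input. The remaining reductions in your proposal (codimension of the boundary, $\QQ$-factoriality, and deducing $D\in\QQ\lambda$ from orthogonality of the principal part to cusp forms via a Borcherds product rather than via Bruinier's isomorphism with the dual of almost cusp forms) are sound and essentially equivalent to the paper's argument.
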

Theorem \ref{thm:mainthm} is then a special case of Theorem \ref{thm:mainthm'} applied to the lattice \[
\Lambda_g \coloneqq \langle 2 - 2g \rangle \oplus E_8(-1)^{\oplus 2} \oplus U^{\oplus 2}.
\] 

\begin{remark}
When $\Gamma$ is neat, the Hodge bundle $\lambda$ extends as a Cartier divisor over $\overline{\Sh}_\Gamma(M)$ (cf.~\cite[Theorem 5.2.11]{Pera}).
\end{remark}

\begin{remark}[Conditions on $M$] Let us give some comments on the conditions given in Theorem \ref{thm:mainthm'}.   The local and global conditions imposed on  $M$ play distinct but crucial roles in our analysis:

 \begin{itemize}[leftmargin=*]
     \item (Global condition) The condition that $M$ splits two hyperbolic planes guarantees 
     \begin{enumerate}[label=(\alph*),nosep]
        \item the group  $\Pic(\Sh_\Gamma(M))^{\rm Heegner}$ admits a simple description in terms of modular forms;
        \item  when $\Gamma=\widetilde{\rO}(M)$, $\Pic(\Sh_\Gamma(M))=\Pic(\Sh_\Gamma(M))^{\rm Heegner}$ is generated by Heegner divisors. 
    \end{enumerate}

     \item (Local condition)  Consider $M = L \oplus U^{\oplus 2}$ with $L$ negative-definite. The requirement that $L \otimes \mathbb{Z}_p$ splits a hyperbolic plane guarantees surjectivity of certain theta liftings, which generally fails without this condition (see Remark \ref{rem:localConditionNecessary}).
     
      \item For $p$-elementary lattices $M$ which only split a hyperpbolic plane,  an analogous result holds when  $\Gamma=\mathrm{O}(M)$. The full statement and proof are provided in Theorem \ref{thm:mainthm2}.
 \end{itemize}

\end{remark}

\subsection{Moduli spaces as Baily--Borel compactifications}  
A central motivation for our study of the Picard group of Baily--Borel compactifications lies in the ubiquity of these compactifications in moduli theory. Many moduli spaces in algebraic geometry admit natural Baily--Borel compactifications or are themselves isomorphic to such compactifications. This is particularly true for moduli spaces of higher-dimensional varieties with special holonomy, such as hyper-K\"ahler varieties and some bounded Calabi--Yau surface pairs (cf.~\cite{Do96,La10,BL24}). 

Recent breakthroughs have established explicit identifications between Baily--Borel compactifications and moduli spaces of boundary polarized Calabi--Yau pairs. In \cite[Thm. 1.1]{BL24}, Blum--Liu construct a projective \textit{asymptotically good moduli space}
\[
\mathscr{M}^{\mathrm{CY}} = \left\{ (X,D)~~ \middle| 
\begin{array}{l}
\text{(1) } \dim X = 2, \, X \text{ projective slc} \\
\text{(2) } K_X + D \sim_{\mathbb{Q}} 0 \\
\text{(3) } D \text{ ample } \mathbb{Q}\text{-divisor} 
\end{array}
\right\}/\sim
\]  
parametrizing $S$-equivalence classes of boundary polarized Calabi--Yau surface pairs. This moduli space (or its seminormalization) serves as a wall-crossing space between K-stability and KSBA compactifications and carries a CM line bundle.

A fundamental example comes from K3 surfaces with non-symplectic involutions. For a K3 surface $S$ with non-symplectic involution $\sigma$ of type $\rho$, the quotient pair  
\[
\left(S/\sigma, \tfrac{1}{2}C\right)
\]  
where $C \subset S$ is the fixed locus of $\sigma$,  forms a boundary polarized CY pair.  Let $\scF_\rho$ be the coarse moduli space of such pairs, which has been extensively studied in the past few years (see \cite{AE22,AET23,AEH24,AEGS2025,BL24}). There are  various modular compactifications of $\scF_\rho$ which fit into a wall-crossing diagram:  

\begin{center}
\begin{tikzpicture}[node distance=1.8cm]
\node (K) {$\mathscr{F}^{\mathrm{K}}_\rho$};
\node (CY) [below of=K] {$\mathscr{F}^{\mathrm{CY}}_\rho$};
\node (KSBA) [left of=CY, xshift=-1.5cm] {$\mathscr{F}^{\mathrm{KSBA}}_\rho$};
\node (BB) [right of=CY, xshift=2.2cm] {$(\Gamma_\rho\backslash \bD_\rho)^{\mathrm{BB}}$};

\draw[->] (K) -- (CY) node[midway, right] {\footnotesize K-moduli};
\draw[->] (KSBA) -- (CY) node[midway, above] {\footnotesize KSBA};
\draw[->, line width=0.7pt] (BB) -- (CY) node[midway, above]  {\footnotesize  \cite[Thm. 1.3]{BL24}};
\end{tikzpicture}
\end{center}
Here $\mathscr{F}^{\mathrm{CY}}_\rho$ is the seminormalization of the closure of $\mathscr{F}_\rho$ in $\mathscr{M}^{\mathrm{CY}}$, $\bD_\rho$ is the type IV Hermitian symmetric domain associated to a lattice $\Lambda_\rho$, and $\Gamma_\rho=\rO(\Lambda_\rho)$ the orthogonal group. The birational morphism \[(\Gamma_\rho \backslash\bD_\rho)^{\rm BB}\to \scF_\rho^{\rm CY}\]
is the normalization map. As a by-product of Theorem \ref{thm:mainthm2}, we obtain the following. 

\begin{corollary}[Picard group of BCY moduli spaces]\label{cor:pic-cy}  
The rational Picard group of the normalization of \(\mathscr{F}^{\mathrm{CY}}_\rho\) is spanned by the CM line bundle provided that \(\dim \mathscr{F}_\rho > 8\) and $\mathscr{F}_\rho$ is not the 11-dimensional case, in which the fixed locus of \(\sigma\) is a smooth rational curve.
\end{corollary}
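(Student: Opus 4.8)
The plan is to identify the normalization of $\scF^{\mathrm{CY}}_\rho$ with an orthogonal Baily--Borel compactification covered by Theorem~\ref{thm:mainthm2}, and then to match the extended Hodge line bundle $\overline{\lambda}$ with the CM line bundle. First I would invoke \cite[Thm.~1.3]{BL24}, recalled in the wall-crossing diagram above: the normalization of $\scF^{\mathrm{CY}}_\rho$ is $(\Gamma_\rho\backslash\bD_\rho)^{\mathrm{BB}}=\overline{\Sh}_{\Gamma_\rho}(\Lambda_\rho)$ with $\Gamma_\rho=\rO(\Lambda_\rho)$. Here $\Lambda_\rho$ is the anti-invariant lattice of the non-symplectic involution $\sigma$ inside the K3 lattice; by Nikulin's theory it is an even \emph{$2$-elementary} lattice of signature $(2,n)$ with $n=\dim\scF_\rho$. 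Thus the hypothesis $\dim\scF_\rho>8$ reads $n>8$, and $\Lambda_\rho$ falls into the scope of the $p$-elementary version of the main theorem (Theorem~\ref{thm:mainthm2}, with $p=2$ and $\Gamma=\rO(M)$) as soon as it splits a hyperbolic plane and satisfies the local $2$-adic condition required there.

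The second step is the verification of these arithmetic hypotheses. Going through Nikulin's classification of even $2$-elementary lattices of signature $(2,n)$ with $n>8$ --- equivalently, of non-symplectic involutions whose invariant lattice has rank at most $11$ --- one checks that each such $\Lambda_\rho$ splits at least one hyperbolic plane and meets the local splitting hypothesis at $2$, with a single exception: the lattice attached to the $11$-dimensional family singled out in the statement (the one whose involution has a smooth rational fixed curve), for which the local condition at $2$ fails. Remark~\ref{rem:localConditionNecessary} shows the conclusion genuinely breaks down in that situation, which is exactly why the case is excluded. For every other $\rho$, Theorem~\ref{thm:mainthm2} gives
\[
\Pic_\QQ\!\left((\Gamma_\rho\backslash\bD_\rho)^{\mathrm{BB}}\right)=\Pic_\QQ\!\left((\Gamma_\rho\backslash\bD_\rho)^{\mathrm{BB}}\right)^{\mathrm{Heegner}}\cong\QQ,
\]
generated by a rational multiple of $\overline{\lambda}$; the Heegner subspace exhausts the whole group because, by the results around \cite{BLMM17}, already $\Pic_\QQ\!\left(\Sh_{\Gamma_\rho}(\Lambda_\rho)\right)$ is spanned by Heegner divisors.

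It then remains to identify $\overline{\lambda}$, up to a positive rational scalar, with the pullback of the CM line bundle along $(\Gamma_\rho\backslash\bD_\rho)^{\mathrm{BB}}\to\scF^{\mathrm{CY}}_\rho\hookrightarrow\scM^{\mathrm{CY}}$. On the interior $\scF_\rho=\Sh_{\Gamma_\rho}(\Lambda_\rho)$ the CM line bundle is built from the Hodge bundle of the weight-two variation of Hodge structure carried by the family of pairs $(S/\sigma,\tfrac{1}{2}C)$ --- equivalently by the family of K3 double covers $S$ --- which is precisely the Hodge line bundle $\lambda$. Since both the CM bundle and $\overline{\lambda}$ are $\QQ$-Cartier, and the Baily--Borel boundary consists of modular curves and points and so has codimension $n-1\ge 2$, the two natural extensions of $\lambda$ must agree up to scaling; this comparison is essentially contained in the construction of the CM line bundle in \cite{BL24}.

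I expect the case analysis in the second step to be the main obstacle: one must scan Nikulin's table of $2$-elementary lattices of signature $(2,n)$ for the small values $n\in\{9,10,11\}$ (larger $n$ being immediate, as the corresponding $\Lambda_\rho$ already split $U^{\oplus 2}$) and isolate precisely the one entry failing the $2$-adic splitting that forces the exclusion. The line-bundle comparison, by contrast, is routine, although it must be carried out with $\rO(\Lambda_\rho)$ far from neat --- so that $\overline{\lambda}$ is only a $\QQ$-line bundle --- which is harmless here since the corollary concerns only the rational Picard group.
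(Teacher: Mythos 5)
Your overall route is the paper's: identify the normalization of $\mathscr{F}^{\mathrm{CY}}_\rho$ with $(\Gamma_\rho\backslash\bD_\rho)^{\mathrm{BB}}$ via \cite[Thm.~1.3]{BL24}, note that $\Lambda_\rho$ is $2$-elementary, feed it into the $p$-elementary case of the main theorem, and match $\overline{\lambda}$ with the CM line bundle. However, two steps are wrong or unjustified. First, you misidentify why the $11$-dimensional case is excluded: the exceptional lattice is $\Lambda_\rho \cong A_1 \oplus U(2) \oplus E_8(-2)$, for which $\ell(G_{\Lambda_\rho}) = \operatorname{rank}\Lambda_\rho$, so it contains no unimodular summand and hence fails the \emph{global} hypothesis of Theorem~\ref{thm:mainthm2} (splitting one hyperbolic plane over $\ZZ$); it is not excluded because of a $2$-adic local splitting condition --- in the $2$-elementary case the needed local splitting of the complement $L$ is verified inside the proof of Theorem~\ref{thm:mainthm2} via Nikulin and is not a separate hypothesis. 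Moreover, Remark~\ref{rem:localConditionNecessary} concerns lattices $L(N)$ with $L$ unimodular and does not show the conclusion ``genuinely breaks down'' in the exceptional case; as stated in Remark~\ref{rmk:nikulin-classification}, that case is simply left open because the method does not apply.

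Second, your claim that the Heegner subspace exhausts $\Pic_\QQ\bigl((\Gamma_\rho\backslash\bD_\rho)^{\mathrm{BB}}\bigr)$ ``by the results around \cite{BLMM17}'' is precisely the point where \cite{BLMM17} does not suffice: its Heegner-spanning statement requires $M$ to split two hyperbolic planes, which $\Lambda_\rho$ need not do (the paper flags this as the subtlety necessitating its approach). The paper instead proves a generalized Eichler criterion for $p$-elementary lattices and deduces Corollary~\ref{cor:Heeg-span}, showing that the pullback of $\Pic_\QQ(\Sh_{\Gamma_0}(M))$ along $\Sh(M)\to\Sh_{\Gamma_0}(M)$ lands in the Heegner subspace; this, together with Theorem~\ref{thm:mainthm2}, yields Corollary~\ref{cor:mainthm2}, which is the statement you actually need (Theorem~\ref{thm:mainthm2} itself is stated only for $\Gamma \leq \widetilde{\rO}(M)$, so it cannot be invoked directly with $\Gamma = \rO(\Lambda_\rho)$; one also needs Remark~\ref{rem:Spinorkernel} to pass from $\rO(\Lambda_\rho)$ to the index-two subgroup preserving the component). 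A minor further slip: for large $n$ it is not true that all relevant $\Lambda_\rho$ split $U^{\oplus 2}$ (e.g.\ those with $\ell(G)=\operatorname{rank}-2$), though this is harmless since only one hyperbolic plane is required. Your final paragraph identifying the CM line bundle with the Hodge bundle on the interior and extending over the codimension~$\geq 2$ boundary is fine and is more detailed than the paper, which leaves this identification implicit.
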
  

The lattice $\Lambda_\rho$ is isomorphic to the coinvariant lattice of $\rH^2(S,\ZZ)$ under the antisymplectic involution, which is a $2$-elementary even lattice. We refer to \cite[3.6.2]{Ni79} and \cite[2A]{AE22} for a complete  classification of all $\Lambda_\rho$. 
Crucially, $\Lambda_\rho$ need not split two hyperbolic planes—a subtlety that necessitates our approach to Corollary \ref{cor:pic-cy}. This result follows from a dedicated analysis of obstruction spaces and the span of Heegner divisors, employing the study of admissible isotropic lifts of theta series.

\begin{remark}\label{rmk:nikulin-classification}
By \cite{Nikulin1983Factorgroups} and see also \cite{AE22}, there exist 75 families of K3 surfaces with non-symplectic involutions, of which 38 satisfy \(\dim \mathscr{F}_\rho > 8\). The exceptional case in Corollary \ref{cor:pic-cy} corresponds to the 11-dimensional family with lattice \(\Lambda_\rho \cong A_1 \oplus U(2) \oplus E_8(-2)\). Since our general framework does not extend to this case, we leave it open for further study, which may require alternative or more specialized methods.
\end{remark}

\subsection{The proof strategy} 
 Our main theorem (Theorem \ref{thm:mainthm}) amounts to characterizing precisely when Heegner divisors extend to the Baily--Borel boundary: this occurs \emph{if and only if} the divisor class is proportional to the Hodge bundle $\lambda$. The proof synthesizes modular forms and Shimura geometry through three key innovations:

\begin{enumerate}[leftmargin=*]

 \item (Obstructions theory) A \emph{local-to-global} criterion which combines Borcherds products \cite{Bo98} with Bruinier--Freitag's boundary analysis \cite{BF01} establishes the following principle between geometry and arithmetic: 

\begin{table}[h]
\renewcommand{\arraystretch}{1.2} 
\begin{tabular}{cclc}
\textbf{Geometry}                                                                                         & \multicolumn{2}{c}{}                                & \textbf{Arithmetic}                                                                                                                \\ \cline{1-1} \cline{4-4} 
\multicolumn{1}{|c|}{Heegner Divisors}                                                                    & \multicolumn{2}{c|}{$\longleftrightarrow$}          & \multicolumn{1}{c|}{\begin{tabular}[c]{@{}c@{}}Dual of\\ \quad Almost Cusp Forms \quad\quad\end{tabular}} \\ \cline{1-1} \cline{4-4} 
\multicolumn{1}{|c|}{\begin{tabular}[c]{@{}c@{}}Obstruction to Extending\\ Heegner divisors\end{tabular}} & \multicolumn{2}{c|}{$\longleftarrow\joinrel\rhook$} & \multicolumn{1}{c|}{\begin{tabular}[c]{@{}c@{}}(Admissible Isotropic Lifts of)\\ theta series\end{tabular}}                                   \\ \cline{1-1} \cline{4-4} 
\end{tabular}
\end{table}

    Roughly speaking, $\Pic_\QQ(\Sh_\Gamma(M))$ can be identified with the dual of almost cusp forms, while the obstruction space contains the space of theta series and their admissible isotropic lifts, which is a subspace of the space of cusp forms.

    \item (Basis problem for modular forms) We study whether the space of cusp forms is generated by theta series or their special isotropic lifts.  This is called the basis problem, which is a classical problem in the theory of modular forms and goes back to Eichler.  We resolve the basis problem in the vector-valued case for half-integral weight forms (Theorem \ref{thm-cusp=theta}), circumventing even signature restrictions in \cite{Ma24}.
    \item (Torsion-freeness) By extending the work in \cite{DFV24}, we establish the torsion-freeness of $\Pic(\Sh_\Gamma(M))$, also simplifying the earlier arguments in \cite{DFV24}.
   
\end{enumerate}

\subsection{Organization of the paper}  
The paper is organized as follows. Section \ref{sec2} establishes foundations of vector-valued Siegel modular forms, including theta series constructions and Siegel--Eisenstein series, developing essential tools for subsequent sections.  

Sections \ref{sec3} and \ref{sec4} form the technical core, where we prove surjectivity of theta liftings for odd-rank cases by adapting \cite{Ma24}'s methods through Bruinier--Stein and Shimura Hecke operators.  Some computational details are provided in the Appendix, and geometrically-oriented readers may proceed directly to Section \ref{sec5}.  

In Sections \ref{sec5} and \ref{sec6}, we establish the arithmetic obstruction subspace of extending Heegner divisors by 
\begin{itemize}
    \item [-] Extension of Eichler's criteria to some lattices splitting only one hyperbolic plane, this allows us to generalize some results in \cite{BLMM17};
    \item [-] Classifying boundary components of Baily--Borel compactifications and introducing so-called admissible isotropic planes, which will be used to construct the obstruction subspace; 
    \item [-] Using Bruinier--Freitag's local Bocherds product and admissible isotropic lifting to obtain an obstruction subspace for extending Heegner divisors in terms of various theta series.
\end{itemize}
The main theorem is proved in Section \ref{sec6}, synthesizing these technical components to resolve the Picard group computation, with applications to moduli spaces of Calabi--Yau pairs.

\subsection*{Notations and conventions}

\begin{itemize}[leftmargin=2em]
\setlength{\itemsep}{5pt}

\item  For any normal variety \( X \), denote by \(\Cl_\QQ(X)\) its \(\QQ\)-Weil divisor class group.

\item A discriminant form is a finite abelian group $G$ with a quadratic form $\frq: G \to \QQ/\ZZ$ such that \[
(\gamma, \delta) \coloneqq \frq(\gamma + \delta) - \frq(\gamma) - \frq(\delta)
\] is a non-degenerate symmetric bilinear form.  More generally, for any two elements $\bfu=(u_1,...,u_d), \bfv=(v_1,...,v_d)\in G^{(d)}=G\times G \times \cdots \times G$, we define $(\bfu,\bfv)\coloneqq\begin{pmatrix}
(u_i,v_j)_{i,j}
\end{pmatrix}  \in \sym_d(\QQ/\ZZ)$. 
The level of $G$ is \[\frn(G):=\min\{m\in\ZZ_{>0}\mid~ m\frq(x) = 0  \hbox{ for all } x\in G\}.\] The non-degeneracy of \((-,-)\) implies \(\frn(G) \cdot \gamma = 0\) for all \(\gamma \in G\). The \emph{\(p\)-rank} of \(G\) is \(\mathrm{rank}_p(G) \coloneqq \dim_{\mathbb{F}_p}(G \otimes \mathbb{F}_p)\).

\item If $G$ is 2-torsion,  one can follow \cite{AE22} to define  the \textit{coparity} of $G$ by $\delta_G=0$ if every
$2\frq(x)=0\mod \ZZ$ for all $x\in G$, otherwise $\delta_G=1$.  There is a unique element $\alpha\in G$ such that 
\[2\frq(\gamma)\equiv (\gamma,\alpha)\mod \ZZ \]
for any $\gamma\in G$, called the {\it characteristic element} of $G$.  Note that $G$ contains a nonzero, isotropic, characteristic element $\alpha$ if and only if $\delta_G=1$ and has signature $0\bmod4$. In this case, the coparity of $\alpha^\bot/\langle\alpha\rangle$ is $0$.

 \item Define:
\begin{align*}
G^n & \coloneqq \mathrm{image}(G \xrightarrow{\times n} G), \\
G_n & \coloneqq \ker(G \xrightarrow{\times n} G), \\
G^{n*} & \coloneqq \left\{ \gamma \in G \mid n\frq(\mu) + (\mu, \gamma) \equiv 0 \pmod{\mathbb{Z}},\, \forall \mu \in G_n \right\}.
\end{align*}

\item Let $\CC[G^{(d)}]$ be the group algebra of $G^{(d)}$.  There is a standard Hermitian inner product $\langle-,-\rangle$ on $\CC[G^{(d)}]$ given by 
\begin{equation}
        \langle \sum\limits_{\gamma\in G^{(d)}} a_\gamma \fre_\gamma, \sum\limits_{\gamma\in G^{(d)}} b_\gamma \fre_\gamma\rangle=\sum\limits_{\gamma\in G^{(d)}} a_\gamma\overline{b}_\gamma.
\end{equation}
By identifying $\CC[G^{(2)}]$ with $\CC[G]\otimes \CC[G]$, we also use $\langle -,-\rangle$ to denote the antilinear map $\CC[G]\times \CC[G^{(2)}]\to \CC[G]$ given by 
\begin{equation}
    \langle v,u\otimes w\rangle =\langle v,u \rangle \cdot \bar{w}
\end{equation}
where $u,v,w\in\CC[G]$.

    \item  Let \((M, \langle \cdot, \cdot \rangle)\) be an even lattice. We can identify the dual lattice $M^\vee$ as the sublattice $$\{x\in M\otimes\QQ\mid~ \left<x,y\right>\in\ZZ~ \hbox{for all } y\in M\}$$ of $M \otimes \QQ$.
The \emph{discriminant group} \(G_M \coloneqq M^\vee/M\) satisfies \(|G_M| = |\det M|\). The quadratic form \(\frq_M: G_M \to \mathbb{Q}/\mathbb{Z}\) is given by
\[
\frq_M(v) \equiv \tfrac{1}{2} \langle v, v \rangle \mod \mathbb{Z}, \quad v \in M^\vee.
\]
Let \(\ell(G_M)\) denote the minimal number of generators of \(G_M\).

\item For a discriminant form \(G\), let \(\mathbf{Gen}_{b^+,b^-}(G)\) denote the genus of even lattices with signature \((b^+,b^-)\) and discriminant form \(G\). The \emph{signature} of \(G\) is defined as
\[
\mathrm{sign}(G) \coloneqq b^+ - b^- \pmod{8} \in \mathbb{Z}/8\mathbb{Z},
\]
for \((b^+,b^-)\) such that the genus \(\mathbf{Gen}_{b^+,b^-}(G)\) is non-empty and the signature is constant over all genera with discriminant form \(G\).

\item Define \(\bfe(x) \coloneqq e^{2\pi i x}\) and \(\sqrt{z} \coloneqq z^{1/2}\) (principal branch on \(\mathbb{C}\)).
\end{itemize}

\section{Vector-valued Siegel modular forms}\label{sec2}

\subsection{Weil Representation}\label{subsec:weil-rep}
Let $\mathbb{H}_d$ denote the Siegel upper half-space of degree $d$. The \emph{metaplectic double cover} $\Mp_{2d}(\ZZ)$ of $\Sp_{2d}(\ZZ)$ consists of elements $\widetilde{g} = \bigl(g, \phi_g(\tau)\bigr)$, where for $g = \begin{pmatrix} A & B \\ C & D \end{pmatrix} \in \Sp_{2d}(\ZZ)$, the function $\phi_g(\tau)$ is holomorphic on $\mathbb{H}_d$ and satisfies $\phi_g(\tau)^2 = \det(C\tau + D)$. The group $\Mp_{2d}(\ZZ)$ is generated by
\[
J_d = \Bigl( \MJ, \sqrt{\det \tau} \Bigr) 
\quad\text{and}\quad 
n(B) = \Bigl( \MU, 1 \Bigr),
\]
where $B \in \sym_d(\ZZ)$.

Let $G$ be a discriminant form. For each $d \in \ZZ_{\geq 1}$, the \emph{Weil representation} $\rho_{G}^{(d)}$ is a unitary representation of $\Mp_{2d}(\ZZ)$ on the group ring $\CC[G^{(d)}]$. It is defined by
\begin{equation}
\begin{aligned}
\rho_{G}^{(d)}(n(B))\fre_{\gamma} 
    &= \bfe\left( \tfrac{1}{2} \tr\left( (\gamma,\gamma) B \right) \right) \fre_{\gamma}, \\
\rho_{G}^{(d)}(J_d)\fre_{\gamma} 
    &= \frac{ \bfe\left( -\frac{d}{8} \sign(G) \right) }{ |G|^{d/2} } 
       \sum_{\delta \in G^{(d)}} \bfe\bigl( -\tr(\gamma,\delta) \bigr) \fre_{\delta},
\end{aligned}
\end{equation}
where $\fre_\gamma$ denotes the standard basis vector in $\CC[G^{(d)}]$ for $\gamma = (\gamma_1,\dots,\gamma_d) \in G^{(d)}$.

\begin{remark}
For $m(U) = \Bigl( \MP, \sqrt{\det U^{-1}} \Bigr) \in \Mp_{2d}(\ZZ)$, we have
\[
\rho_{G}^{(d)}(m(U))\fre_{\gamma} 
    = \sqrt{\det U^{-1}}^{\sign(G)} \fre_{\gamma U^{-1}}.
\]
\end{remark}

\paragraph{\bf Conventions:} When $G \cong M^\vee/M$ for a lattice $M$, we write $\rho_{M}^{(d)}$ for $\rho_{G}^{(d)}$. For $d=1$, we simplify this to $\rho_M$.

\subsection{Siegel Modular Forms}

For $k \in \frac{1}{2}\ZZ$, define the Petersson slash operator $\mid_{k,G^{(d)}}$ on vector-valued functions $f \colon \mathbb{H}_d \to \CC[G^{(d)}]$ by
\[
(f \mid_{k,G^{(d)}} [\widetilde{g}])(\tau) \coloneqq 
    \phi_g(\tau)^{-2k} \det(g)^{k/2}  \rho_{G}^{(d)}(\widetilde{g})^{-1} f(g\tau),
\]
where $\widetilde{g} = (g, \phi_g(\tau)) \in \widetilde{\GL_2^+}(\RR)$ with
\[\widetilde{\GL_2^+}(\RR) \coloneqq 
    \Biggl\{ \Bigl( g = \begin{pmatrix} a & b \\ c & d \end{pmatrix}, \phi_g(\tau) \Bigr) \in \GL_2^+(\RR) \times \mathcal{O}(\mathbb{H}_d)
    \Bigm| \exists t \in \CC^\times, \, \phi_g(\tau)^2 = t (c\tau + d) \Biggr\}\]
denoting the metaplectic $\CC^\times$-extension of $\GL_2^+(\RR)$\footnote{This convention differs from Shimura's \cite{Shimura73}, following Bruinier--Stein \cite{BS}.}.

\begin{definition}
    A \emph{Siegel modular form} of weight $k$ and type $\rho_{G}^{(d)}$ is a holomorphic function $f \colon \mathbb{H}_d \to \CC[G^{(d)}]$ satisfying:
    \begin{enumerate}[label=(\roman*), leftmargin=*, nosep]
        \item $f \mid_{k,G^{(d)}} [\widetilde{g}] = f$ for all $\widetilde{g} \in \Mp_{2d}(\ZZ)$;
        \item $f$ is holomorphic at cusps.
    \end{enumerate}
\end{definition}

Such forms admit Fourier expansions
\[
f(\tau) = \sum_{\gamma \in G^{(d)}} 
    \sum_{\substack{T \in \sym_d(\ZZ)_{\mathrm{even}} + (\gamma,\gamma) \\ T \geq 0}} 
        c(T, \gamma) \bfe\bigl( \tfrac{1}{2} \tr(T\tau) \bigr) \fre_\gamma.
\]
We call $f$ a \emph{cusp form} when $c(T,\gamma) \neq 0 \Rightarrow T > 0$.

Denote by $\Mod_k(\rho_{G}^{(d)})$ and $\Cusp_k(\rho_{G}^{(d)})$ the spaces of Siegel modular forms and cusp forms, respectively. The central element $(\mathrm{I}_{2d}, -1)$ acts via $(-1)^{\sign(G)}$, implying
\[
\Mod_k(\rho_{G}^{(d)}) = 0 \quad \text{if} \quad 2k + \sign(G) \not\equiv 0 \pmod{2}.
\]
We therefore universally assume the parity condition
\begin{equation}\label{parity}
    2k + \sign(G) \equiv 0 \pmod{2}.
\end{equation}

For $d=1$, the Petersson inner product on $\Mod_k(\rho_M)$ is
\[
\langle f, g \rangle_{\mathrm{Pet}} \coloneqq 
    \int_{\SL_2(\ZZ) \backslash \mathbb{H}} \langle f(z), g(z) \rangle y^{k-2}  \mathrm{d}x\mathrm{d}y,
\]
well-defined when at least one of them is cuspidal.

\subsection{Some functoriality results regarding Weil representations}

This subsection gathers additional notions and results related to various Weil representations for future reference. 
\subsection*{Isotropic lift/descent}

Let $H\leq G$ be an isotropic subgroup.
\begin{itemize}[leftmargin=2em]
    \item The isotropic lift map $\uparrow_H:\CC[H^\perp/H]\to \CC[G] $ is defined by 
    \[\uparrow_H(\fre_{\gamma+H})=\sum\limits_{\mu\in H} \fre_{\gamma+\mu}\] 
for all $\gamma\in H^\perp$.
\item The isotropic descent map $\downarrow_H:\CC[G]\to \CC[H^\perp/H]$ is defined by 
\[
\downarrow_H(\fre_\gamma) = 
\begin{cases} 
\fre_{\gamma+H} & \text{if } \gamma \in H^\perp, \\
0 & \text{otherwise}.
\end{cases}
\]
\end{itemize}   

It is well known that they are adjoint with respect to the standard inner products on $\CC[G]$ and $\CC[H^\perp/H]$ respectively. Further, they commute with Weil representations, which implies that they take modular forms to modular forms and preserve cuspidality.

\subsection*{Embeddings and tensor products}

For $0<r< d$, there is an inclusion map
\begin{equation}
  \iota:  \Mp_{2r}(\ZZ)\times \Mp_{2d-2r}(\ZZ)\to \Mp_{2d}(\ZZ) 
\end{equation}
given by
\[
 \Big(\left( \begin{matrix} A & B \\ C & D \end{matrix} \right), \phi \Big), 
\Big(\left( \begin{matrix} A' & B' \\ C' & D' \end{matrix} \right), \phi' \Big)  
\longmapsto 
\Bigg(\left( \begin{matrix} A &  & B &  \\  & A' &  & B' \\
C &  & D & \\
& C' & & D' 
\end{matrix} \right), 
\widetilde{\phi} \Bigg),
\] where $\widetilde{\phi}(\diag(z,z')) 
= \phi(z) \cdot \phi'(z')$. This embedding satisfies the following compatibility with the Weil representation 
\begin{equation*}
    \rho_G^{(d)}(\iota(\tg_1,\tg_2))(\fre_{\gamma}\otimes \fre_{\gamma'})=\rho_G^{(r)}(\tg_1)(\fre_\gamma)\otimes \rho_G^{(d-r)}(\tg_2)(\fre_{\gamma'}),
\end{equation*}
where we identify $\CC[G^{(r)}]\otimes \CC[G^{(d-r)}]$ with $\CC[G^{(d)}]$.(cf.~\cite[Lem. 2.2]{BZ24})

\subsection{Siegel--Eisenstein series and theta series}
Suppose $M$ is a positive definite even lattice of rank $r$ with bilinear form $(-,-)$. Fix an embedding of $M$ into $\RR^r$ such that $(-,-)$ coincides with the standard inner product on $\RR^r$.
\subsection*{Siegel--Eisenstein series}
Let $\Gamma^{(d)}_\infty\subseteq \Sp_{2d}(\ZZ)$ be the subgroup generated by 
\[
    \MU \text{ and } \MP 
\]
for $B\in \sym_d(\ZZ)$ and $U\in \GL_d(\ZZ)$.  Let $\widetilde{\Gamma}_\infty^{(d)}\subseteq \Mp_{2d}(\ZZ)$ be the preimage of $\Gamma_\infty^{(d)}$ under the metaplectic cover. For $k\in\frac{1}{2}\ZZ$ and $k>d+1$, the vector-valued Siegel--Eisenstein series is defined by 
$$\begin{aligned}
    \bE^{(d)}_{k,M}(\tau)&=\sum\limits_{\tilde{g}= (g,\phi_g(\tau))\in \widetilde{\Gamma}_\infty^{(d)}  \backslash \Mp_{2d}(\ZZ)} \fre_0\mid_{k,G^{(d)}_M}[\tg] \\
&=\sum\limits_{\tilde{g}= (g,\phi_g(\tau))\in \widetilde{\Gamma}_\infty^{(d)}  \backslash \Mp_{2d}(\ZZ)}\phi_g(\tau)^{-2k}\cdot \rho_{M}^{(d)}(\tilde{g})^{-1}(\fre_0),
\end{aligned}$$
which is well-defined and converges normally. It is a vector-valued modular form of weight $k$ and type $\rho_{M}^{(d)}$. The summand $\fre_0\mid_{k,G_M^{(d)}}[\tg]$ is $\widetilde{\Gamma}_\infty^{(d)}$-invariant since the parity condition \eqref{parity} holds.  

\subsection*{Theta Series}

\begin{definition}
Let \(F(X)\) be a harmonic polynomial in the matrix variable \(X = (X_{ij})_{\substack{i=1,\dots,r \\ j=1,\dots,d}}\) of degree \(h\). We define the theta series
\begin{equation}\label{eq:theta}
    \Theta_{M,F}^{(d)}(\tau) = \sum_{\mathbf{v} \in (M^\vee)^{(d)}} F(\mathbf{v}) \mathbf{e}\left( \frac{1}{2} \tr\left( (\mathbf{v}, \mathbf{v}) \tau \right) \right) \fre_{\mathbf{v} + M^d}
\end{equation}
and the genus theta series
\begin{equation}\label{eq:gen-theta}
    \Theta_{\mathbf{Gen}(M)}^{(d)} = \frac{ 
        \sum\limits_{L \in \mathbf{Gen}(M)} |\mathrm{Aut}(L)|^{-1} \sum\limits_{\sigma \in \mathrm{Iso}(G_M, G_L)} \sigma^{*} \Theta^{(d)}_{L,1} 
    }{ 
        |\mathrm{Aut}(G_M)| \sum\limits_{L \in \mathbf{Gen}(M)} |\mathrm{Aut}(L)|^{-1} 
    },
\end{equation}
where \(\mathbf{Gen}(M)\) denotes \(\mathbf{Gen}_{r,0}(G_M)\), and \(\sigma^{*} \fre_{\gamma} = \fre_{\sigma^{-1}(\gamma)}\) for \(\gamma \in G_L^{(d)}\).

\end{definition}

By \cite[Thm. 4.1]{Bo98}, \(\Theta^{(d)}_{M,F} \in \mathrm{Mod}_{h+\frac{r}{2}}(\rho_{M}^{(d)})\). Moreover, \(\Theta^{(d)}_{M,F}\) is a cusp form when \(h > 0\). Define the subspaces
\[
\Mod^\theta_k(\rho_{M}^{(d)}) \coloneqq \mathrm{Span} \left\{ \sigma^{*}\Theta_{L,F}^{(d)} \;\middle|\; 
\begin{array}{c} 
    L \in \mathbf{Gen}(M), \\ 
    F \text{ harmonic of degree } k - \frac{r}{2}, \\ 
    \sigma \in \mathrm{Iso}(G_M, G_L)
\end{array} \right\} \subseteq \Mod_k(\rho_{M}^{(d)}),
\]
and \(\Cusp^\theta_k(\rho_{M}^{(d)}) \coloneqq \Mod^\theta_k(\rho_{M}^{(d)}) \cap \Cusp_k(\rho_{M}^{(d)})\). The central question addressed in the next two sections is whether 
\[
\Cusp^\theta_k(\rho_{M}^{(d)}) = \Cusp_k(\rho_{M}^{(d)}),
\]
i.e., whether every cusp form is a linear combination of theta series. This was established for even-rank lattices in \cite{Ma24}. We adapt the proof to the odd-rank case.

For a negative definite lattice \(M\), define
\[
\Theta_{M,F}^{(d)} \coloneqq \Theta_{M(-1),F}^{(d)} \in \mathrm{Mod}_{h+\frac{r}{2}}(\rho_{M(-1)}^{(d)}) = \mathrm{Mod}_{h+\frac{r}{2}}(\rho_{M}^{(d)*}),
\]
where \(\rho_{M}^{(d)*}\) denotes the dual representation of \(\rho_{M}^{(d)}\).

\begin{remark}\label{rmk:quadric-harmonic}
When \(d=1\), the space of harmonic polynomials in \(r\) variables of degree \(h\) has dimension 
\[
\binom{r+h-1}{r-1} - \binom{r+h-3}{r-1}.
\]
An explicit basis can be constructed via the Kelvin transform (cf. \cite[Thm. 5.25]{ABR01}). For \(h=2\), harmonic polynomials are linear combinations of 
\begin{equation}  
F_u(v) = \langle u, v \rangle^2 - \frac{\langle u, u \rangle \langle v, v \rangle}{r}
\end{equation}
for \(u \in M\).
\end{remark}

The genus theta series can be identified with the Siegel--Eisenstein series via the Siegel--Weil formula.

\begin{theorem}\label{thm:Eis=Theta}
Let \(M\) be a positive definite even lattice of rank \(r\) with \(\frac{r}{2} > d + 1\). Then
\[
\Theta^{(d)}_{\mathbf{Gen}(M)} = \mathbf{E}_{\frac{r}{2},M}^{(d)}.
\]
\end{theorem}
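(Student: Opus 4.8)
The plan is to prove the identity by comparing Fourier coefficients, using the Siegel $\Phi$-operator to reduce to the nondegenerate case and then invoking Siegel's main theorem on quadratic forms on the theta side against the classical unfolding of the Eisenstein series on the other side. Both $\Theta^{(d)}_{\mathbf{Gen}(M)}$ and $\bE^{(d)}_{\frac r2,M}$ lie in $\Mod_{\frac r2}(\rho_{M}^{(d)})$: the former because it is a finite convex combination of the forms $\sigma^{*}\Theta^{(d)}_{L,1}$ by \cite[Thm. 4.1]{Bo98}, the latter because the hypothesis $k=\frac r2>d+1$ ensures normal convergence of the Eisenstein sum. Hence it suffices to show that for every $\gamma\in G_M^{(d)}$ and every $T\in\sym_d(\ZZ)_{\mathrm{even}}+(\gamma,\gamma)$ with $T\geq 0$, the $(T,\gamma)$-Fourier coefficients of the two sides coincide.

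First I would reduce to the case $T>0$. The Siegel $\Phi$-operator is compatible with both constructions: from the Fourier expansion \eqref{eq:theta}, positive-definiteness of $M$ forces the columns of $\bfv$ that pair to zero to vanish, whence $\Phi\,\Theta^{(d)}_{\mathbf{Gen}(M)}=\Theta^{(d-1)}_{\mathbf{Gen}(M)}$ (the weights $|\Aut(L)|^{-1}$, $|\Aut(G_M)|$ and the sum over $\sigma\in\mathrm{Iso}(G_M,G_L)$ in \eqref{eq:gen-theta} are untouched), while $\Phi\,\bE^{(d)}_{\frac r2,M}=\bE^{(d-1)}_{\frac r2,M}$ is the standard restriction behaviour of Siegel--Eisenstein series in the convergent range, once one matches $\rho_{M}^{(d)}$ restricted along $\iota$ with $\rho_{M}^{(d-1)}\otimes\rho_{M}^{(1)}$ and notes $\fre_0\mapsto\fre_0\otimes\fre_0$. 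Since both sides are invariant under the $m(U)$, one may assume $T=\bigl(\begin{smallmatrix}T'&0\\0&0\end{smallmatrix}\bigr)$ with $T'>0$ of size $t$; for $t<d$ the $(T,\gamma)$-coefficient reduces to the degree-$t$ coefficient at $T'$, which agrees by induction on $d$. The rank-zero part is the constant term $T=0$, where on both sides the coefficient is $\fre_0$ because $|\mathrm{Iso}(G_M,G_L)|=|\Aut(G_M)|$ for each $L\in\mathbf{Gen}(M)$ cancels the normalizing factors of \eqref{eq:gen-theta}. This leaves only $T>0$.

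For $T>0$ the $(T,\gamma)$-coefficient of $\Theta^{(d)}_{\mathbf{Gen}(M)}$ is the genus-averaged representation number, and Siegel's main theorem (equivalently the Minkowski--Siegel mass formula, or Weil's adelic reformulation) identifies it with a product of local densities $\beta_\infty(T)\prod_p\beta_p(T,\gamma)$. On the other side, unfolding $\bE^{(d)}_{\frac r2,M}$ over $\widetilde{\Gamma}^{(d)}_\infty\backslash\Mp_{2d}(\ZZ)$ --- parametrizing cosets by $\GL_d(\ZZ)$-orbits of coprime symmetric pairs $(C,D)$ --- expresses the $(T,\gamma)$-coefficient as an archimedean confluent-hypergeometric integral times a generalized singular series in the ``$C$-variable''; the singular series factors into the Euler product $\prod_p\beta_p(T,\gamma)$ of the same local densities, and because $\frac r2>d+1$ the archimedean integral converges to $\beta_\infty(T)$, a quotient of $\Gamma$-factors. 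Matching these, together with the global constants --- the factor $\bfe(-\tfrac d8\sign(G_M))$ from $\rho_{M}^{(d)}(J_d)$ and the factor $|\Aut(G_M)|$ of \eqref{eq:gen-theta} --- gives equality coefficient by coefficient, hence the theorem. This computation is classical: for $d=1$ it is the vector-valued Eisenstein coefficient formula of Bruinier--Kuss, and the general-$d$ case follows Kitaoka's treatment of Siegel--Eisenstein series modulo the Weil-representation bookkeeping.

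I expect the main obstacle to be precisely this bookkeeping: reconciling the metaplectic cocycle and the $\widetilde{\GL_2^+}(\RR)$-convention adopted here (which, as noted, differs from Shimura's) with the normalizations of the local densities and of the genus weights in \eqref{eq:gen-theta}, and ensuring that the $\Phi$-compatibility in the induction respects these conventions. A conceptually cleaner, though notationally heavier, alternative that sidesteps the degenerate-$T$ analysis is the adelic Siegel--Weil formula in Weil's convergent range $r>2d+2$ (which is exactly $\frac r2>d+1$): since $M$ is positive definite, $\rO(M)(\RR)$ is compact, so the theta integral over $[\rO(M)]$ converges absolutely and, by Weil's theorem, equals the value at $s_0=\tfrac r2$ of the Siegel--Eisenstein series; choosing the Schwartz function $\varphi_\infty\otimes\mathbf{1}_{\widehat{M}+\gamma}$ with $\varphi_\infty$ the standard Gaussian and unwinding the integral over $\rO(M)(\QQ)\backslash\rO(M)(\AA_f)/\rO(\widehat{M})$ --- the set of classes in $\mathbf{Gen}(M)$, with adelic measure producing the $|\Aut(L)|^{-1}$ weights --- recovers exactly \eqref{eq:gen-theta} on one side and $\bE^{(d)}_{\frac r2,M}$ on the other. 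The only remaining work is the dictionary between the adelic picture and the classical vector-valued conventions used in this paper.
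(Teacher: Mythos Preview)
Your proposal is correct, and in fact you outline two valid routes. The paper takes your \emph{second} route --- the adelic Siegel--Weil formula in Weil's convergent range --- rather than your primary one. Concretely, the paper invokes the metaplectic Siegel--Weil identity $E(g,s_0,\varphi)=\int_{[\rO(V)]}\theta(g,h,\varphi)\,\rmd h$ for $r>d+3$ (automatic from $r>2d+2$), then translates both sides back into the classical vector-valued language: the theta side unwinds over $\rO(V)(\QQ)\backslash\rO(V)(\AA_f)/\rO(\widehat{M})\cong\mathbf{Gen}(M)$ exactly as you sketch, and on the Eisenstein side the paper verifies the dictionary by computing $\omega_{\psi,\infty}(\tg)q_\tau=\phi_g(\tau)^{-r}q_{g\tau}$ for $q_\tau(\bfx)=e^{-\pi i\tr((\bfx,\bfx)\tau)}$ on the generators $J_d$, $n(B)$ of $\Mp_{2d}(\ZZ)$.

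Your primary approach --- reducing via the Siegel $\Phi$-operator to $T>0$ and matching local densities from Siegel's main theorem against the unfolded singular series --- is the classical Siegel argument and is equally valid. It has the virtue of being self-contained and making the Fourier coefficients explicit (useful if one later needs them), at the cost of the bookkeeping you flag: tracking the metaplectic cocycle, the $\bfe(-\tfrac d8\sign(G_M))$ factor, and the normalization of \eqref{eq:gen-theta} through the unfolding. The adelic route trades this for a single black-box citation plus the Weil-representation dictionary of Remark~\ref{ref:weil-rep comparison}, which is arguably less to verify but requires comfort with the automorphic formalism. Either approach would serve; the paper's choice is driven by the availability of a clean metaplectic Siegel--Weil reference and the fact that the same dictionary is reused elsewhere.
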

\begin{proof}
The proof appears in Appendix \ref{appA} (Theorem \ref{thm:vecSW}).
\end{proof}

\section{Vector-valued Hecke operator}\label{sec3}

\subsection{Bruinier--Stein's vector-valued Hecke operator}

Let $G$ be a finite abelian group equipped with a discriminant form of level $\frn(G)=:N$. Bruinier and Stein have introduced a Hecke operator acting on $\Mod_k(\rho_G)$. For each positive integer $\alpha>0$, set $$ \btg_\alpha=\Big(\left(\begin{array}{cc}\alpha^2 & 0 \\0 & 1\end{array}\right) ,1\Big)\in \widetilde{\GL_2^+}(\RR).$$ 
The (inverse of the) Weil representation can be extended to the double coset \[\widetilde{\bY}_{\alpha^2}\coloneqq \Mp_2(\ZZ) \cdot \btg_\alpha  \cdot \Mp_2(\ZZ).\] For any element  $\tg\cdot  \btg_\alpha \cdot \tg'\in \widetilde{\bY}_{\alpha^2}$, the action is defined by:
\[\fre_\lambda\mid [\tg\cdot  \btg_\alpha \cdot \tg']\coloneqq \fre_\lambda\mid _G[\tg]\cdot\mid_G[\btg_\alpha]\cdot\mid_G[\tg'],\] 
where $\fre_\gamma \mid _G[\tg]=\rho_G(\tg)^{-1}(\fre_\gamma)$ is the Weil representation  and $\fre_\gamma \mid_G[\btg_\alpha]=\fre_{\alpha\gamma}$. This action is well-defined by \cite[Proposition 5.1]{BS}.

\begin{remark}
    It is easy to see that this extension can also be made in the case when $\alpha=0$. This further extension will be used in Section \ref{sec:mod-theta}.
\end{remark}

Armed with this extended representation, we can define the vector-valued Hecke operator as follows.

\begin{definition}[{\cite[Def. 5.5]{BS}}]\label{hecke}
Let $\alpha>0$ be a positive integer. Express $\widetilde{\bY}_{\alpha^2}$ as a disjoint union of left cosets 
   $$\bigsqcup\limits_{i} \Mp_2(\ZZ)\cdot\tilde{\delta}_i$$
for  some $\tilde{\delta}_i=(\delta_i,\phi_{\delta_i}(\tau))\in \widetilde{\bY}_{\alpha^2}$. The Hecke operator 
  $ \bT_{\alpha^2}: \Mod_k(\rho_G)\to \Mod_k(\rho_G)$
is defined by $$\bT_{\alpha^2}(f)\coloneqq \alpha^{k-2}\sum_i \sum_{\gamma\in G}(f_\gamma \mid_k[\tilde{\delta}_i])\otimes (\fre_\gamma\mid [\tilde{\delta}_i]), $$ where $f=\sum\limits_{\gamma\in G} f_\gamma\otimes \fre_{\gamma}$. 

\end{definition}

This definition is proved to be well-defined in \cite[Section 5]{BS}. The main properties of these Hecke operators are summarized in the following proposition.

\begin{proposition}[{\cite[Thm. 4.12, Thm 5.6]{BS}}]\label{prop:hecke-alg}
    The Hecke operators defined in Definition \ref{hecke} satisfy the following properties:
    \begin{itemize}[leftmargin=2em]
     \item These Hecke operators take cusp forms to cusp forms. 
     \item They are self-adjoint with respect to the Petersson inner product, and the Hecke operators $$\{\bT_{\alpha^2}:\gcd(\alpha, N)=1\}$$ generate a commutative subalgebra of $\operatorname{End}_{\CC}(\Cusp_k(\rho_G))$.
     \item For each pair $\alpha,\beta$ of coprime positive integers, $\bT_{\alpha^2}\circ\bT_{\beta^2}=\bT_{(\alpha\beta)^2}$.
    \end{itemize}
\end{proposition}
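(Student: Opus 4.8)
The statement to prove is Proposition~\ref{prop:hecke-alg}, but since it is quoted verbatim from \cite[Thm.~4.12, Thm.~5.6]{BS}, my plan is not to reprove it from scratch but to indicate how each clause is established in the Bruinier--Stein framework, adapting their arguments to the vector-valued metaplectic setting as used here.

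First, to see that the $\bT_{\alpha^2}$ preserve cuspidality: one checks that each left-coset representative $\tilde\delta_i = (\delta_i,\phi_{\delta_i})$ occurring in the decomposition of $\widetilde{\bY}_{\alpha^2}$ has lower-triangular-free denominators in the sense that acting by $\mid_k[\tilde\delta_i]$ on a $q$-expansion supported on $T>0$ (resp.\ $T\ge 0$) again yields an expansion supported on positive-definite indices; combined with the $\Mp_2(\ZZ)$-invariance that makes $\bT_{\alpha^2}f$ a genuine modular form, this shows $\bT_{\alpha^2}(\Cusp_k(\rho_G))\subseteq \Cusp_k(\rho_G)$. The well-definedness (independence of the choice of coset representatives, and the fact that $\bT_{\alpha^2}f$ is again of type $\rho_G$) is exactly \cite[Section~5]{BS} and is already cited; I would only reproduce the key point that the extended action $\fre_\lambda\mid[\tg\btg_\alpha\tg']$ depends only on the double coset, which rests on \cite[Prop.~5.1]{BS}.

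Second, self-adjointness with respect to the Petersson product: the standard mechanism is to exhibit, for $\gcd(\alpha,N)=1$, an adjoint coset decomposition, i.e.\ to show $\widetilde{\bY}_{\alpha^2}$ is stable under the main anti-involution $\tg\mapsto \tg^{\iota}$ of the Hecke pair (transpose-inverse up to scalar), so that the adjoint of $\bT_{\alpha^2}$ is again $\bT_{\alpha^2}$. Concretely, since $\btg_\alpha$ corresponds to $\mathrm{diag}(\alpha^2,1)$ and $\gcd(\alpha,N)=1$, the double coset is self-dual, and unfolding the Petersson integral against the coset representatives gives $\langle \bT_{\alpha^2}f,g\rangle_{\mathrm{Pet}} = \langle f,\bT_{\alpha^2}g\rangle_{\mathrm{Pet}}$; commutativity of the subalgebra generated by $\{\bT_{\alpha^2}:\gcd(\alpha,N)=1\}$ then follows formally from simultaneous diagonalizability of commuting self-adjoint operators once one has the multiplicativity relation. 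That multiplicativity, the third bullet $\bT_{\alpha^2}\circ\bT_{\beta^2}=\bT_{(\alpha\beta)^2}$ for coprime $\alpha,\beta$, reduces to a coset-counting identity: the product double coset $\widetilde{\bY}_{\alpha^2}\cdot\widetilde{\bY}_{\beta^2}$ decomposes as $\widetilde{\bY}_{(\alpha\beta)^2}$ with multiplicity one when $\gcd(\alpha,\beta)=1$, mirroring the classical $T_m T_n = T_{mn}$ for $\gcd(m,n)=1$ on $\Sp_2$, and the normalizing factor $\alpha^{k-2}$ is chosen precisely so the representation-theoretic twists on the $\CC[G]$-component multiply correctly.

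The main obstacle, and the only place the odd-rank / half-integral weight situation genuinely intervenes, is verifying that the extended Weil-representation action on the double coset $\widetilde{\bY}_{\alpha^2}$ is consistent with the metaplectic cocycle: one must check that $\fre_\lambda\mid[\tg_1\tg_2\btg_\alpha\tg_3] = \fre_\lambda\mid[\tg_1]\mid[\tg_2\btg_\alpha\tg_3]$ and similar associativity relations hold despite the square-root factors $\phi_g(\tau)$ and the $\sign(G)$-dependent eighth roots of unity in $\rho_G^{(1)}(J_1)$. This is where \cite[Prop.~5.1]{BS} does the work, and for our application it suffices to invoke it directly since $\gcd(\alpha,N)=1$ guarantees $\alpha$ acts invertibly on $G$, so no degeneration of the Weil action occurs. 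I therefore regard Proposition~\ref{prop:hecke-alg} as an immediate consequence of the cited results; the substantive new content of the paper enters only later, in combining these operators with the Shimura and Bruinier--Stein lifts to attack the basis problem.
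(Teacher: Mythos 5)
The paper offers no proof of this proposition beyond the citation to Bruinier--Stein, and your proposal takes the same route: you defer to \cite{BS} (well-definedness via their Prop.~5.1, cuspidality via the upper-triangular coset representatives, self-adjointness via the self-dual double coset, multiplicativity via coset counting), so the approach matches the paper's. The one quibble is your remark that commutativity of the subalgebra ``follows formally from simultaneous diagonalizability of commuting self-adjoint operators'' --- that is circular, since commutativity is what is being asserted; in \cite{BS} it rests on the composition laws among the operators $\bT_{p^{2n}}$ at a fixed prime $p\nmid N$ together with the coprime multiplicativity, but as you invoke the cited theorems directly this does not affect the conclusion.
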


As a consequence, one has 
\begin{corollary}\label{cor:basis}
There exists a basis of $\Cusp_k(\rho_G)$ consisting of simultaneous eigenforms for all Hecke operators in $\{\bT_{\alpha^2}:\gcd(\alpha, N)=1\}$.
\end{corollary}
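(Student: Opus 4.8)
The statement is a direct application of the finite-dimensional spectral theorem, so the plan is essentially to verify its hypotheses and then invoke it. First I would record the two standard inputs: that $\Cusp_k(\rho_G)$ is a finite-dimensional $\CC$-vector space (since $\rho_G$ factors through a finite quotient of $\Mp_2(\ZZ)$, one has the usual bound on Fourier coefficients of cusp forms, forcing finite dimension), and that the Petersson inner product $\langle-,-\rangle_{\mathrm{Pet}}$ turns it into a finite-dimensional complex Hilbert space.

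Next I would assemble the algebraic structure. By Proposition \ref{prop:hecke-alg}, each $\bT_{\alpha^2}$ with $\gcd(\alpha,N)=1$ preserves $\Cusp_k(\rho_G)$, is self-adjoint with respect to $\langle-,-\rangle_{\mathrm{Pet}}$, and any two such operators commute; let $\mathcal{T}\subseteq\operatorname{End}_\CC(\Cusp_k(\rho_G))$ be the commutative subalgebra they generate (together with the identity). Although the index set $\{\alpha:\gcd(\alpha,N)=1\}$ is infinite, $\mathcal{T}$ is a linear subspace of the finite-dimensional algebra $\operatorname{End}_\CC(\Cusp_k(\rho_G))$, hence finitely generated as a vector space; I would choose $\alpha_1,\dots,\alpha_m$ coprime to $N$ so that $\mathrm{id},\bT_{\alpha_1^2},\dots,\bT_{\alpha_m^2}$ span $\mathcal{T}$.

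Then I would run the usual simultaneous-diagonalization induction on $m$: since $\bT_{\alpha_1^2}$ is self-adjoint, $\Cusp_k(\rho_G)$ is the orthogonal direct sum of its eigenspaces; each eigenspace is stable under every operator commuting with $\bT_{\alpha_1^2}$, in particular under $\bT_{\alpha_2^2}$, which restricts there to a self-adjoint operator, and so on. This produces an orthogonal decomposition of $\Cusp_k(\rho_G)$ into common eigenspaces of $\bT_{\alpha_1^2},\dots,\bT_{\alpha_m^2}$; taking an orthonormal basis of each summand gives a basis of simultaneous eigenforms for these finitely many operators, and since every $\bT_{\alpha^2}$ with $\gcd(\alpha,N)=1$ lies in $\mathcal{T}$, such a basis is automatically a simultaneous eigenbasis for the entire Hecke family. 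There is no serious obstacle in this argument; the only points needing the (standard) justifications above are the finite-dimensionality of $\Cusp_k(\rho_G)$ and the reduction from the infinite Hecke family to a finite spanning set of $\mathcal{T}$.
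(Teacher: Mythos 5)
Your argument is correct and is exactly the reasoning the paper leaves implicit: Corollary \ref{cor:basis} is stated as an immediate consequence of Proposition \ref{prop:hecke-alg}, the intended proof being simultaneous diagonalization of the commuting family of Petersson-self-adjoint operators on the finite-dimensional space $\Cusp_k(\rho_G)$. Your extra care about finite-dimensionality and about reducing the infinite Hecke family to a finite spanning set of the algebra is sound but standard, so there is no substantive difference from the paper's (unwritten) argument.
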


\subsection{Non-vanishing of $L$-values}
Let $f\in\Cusp_k(\rho_G)$ be a non-zero simultaneous eigenform for all Hecke operators $\{\bT_{\alpha^2}:\gcd(\alpha, N)=1\}$ with eigenvalues $\{\lambda(\alpha^2):\gcd(\alpha, N)=1\}$. One can define the $L$-series
\begin{equation}\label{L-series}
    L(f,s)\coloneqq \sum_{\substack{\alpha\geq 1 \\ \gcd(\alpha, N)=1}}\frac{\lambda(\alpha^2)}{\alpha^s}.
\end{equation}
The main analytic properties of the $L$-series relevant to our goals are summarized in the following theorem.

\begin{theorem}\label{thm:nonvanish}
   Suppose $k\geq2$. The $L$-series \eqref{L-series} converges absolutely for $\mathrm{Re}(s)>k+1$. In particular, when $\mathrm{Re}(s)>k+1$,
   \[L(f,s)\neq 0.\]
\end{theorem}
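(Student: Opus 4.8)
The plan is to deduce the absolute convergence from a polynomial bound on the Hecke eigenvalues of $f$, and then to read off the non-vanishing from the Euler product these eigenvalues produce. Write $f=\sum_{n,\gamma}c(n,\gamma)\,\bfe(n\tau)\,\fre_\gamma$. By Proposition~\ref{prop:hecke-alg} the operators $\{\bT_{\alpha^2}:\gcd(\alpha,N)=1\}$ commute and satisfy $\bT_{\alpha^2}\circ\bT_{\beta^2}=\bT_{(\alpha\beta)^2}$ for coprime $\alpha,\beta$, so the eigenvalue function $\alpha\mapsto\lambda(\alpha^2)$ is multiplicative on integers prime to $N$ and
\[
L(f,s)=\prod_{p\nmid N}L_p(f,s),\qquad L_p(f,s)=\sum_{j\ge 0}\lambda(p^{2j})\,p^{-js}.
\]
The statement then reduces to the estimate $|\lambda(\alpha^2)|\ll_f\alpha^{k}$ (or even just $\ll_{f,\varepsilon}\alpha^{k+\varepsilon}$), uniformly in $\alpha$.

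To obtain the eigenvalue bound I would first invoke the elementary bound on the Fourier coefficients of the cusp form $f$: since $|f(\tau)|(\mathrm{Im}\,\tau)^{k/2}$ is bounded on $\mathbb{H}$ (it is $\Mp_2(\ZZ)$-invariant and decays at the cusp), optimizing over $\mathrm{Im}\,\tau$ yields $|c(m,\gamma)|\ll_f m^{k/2}$. Next I would fix one pair $(n_0,\gamma_0)$ with $c(n_0,\gamma_0)\neq 0$ and apply the explicit formula of Bruinier--Stein~\cite{BS} for the action of $\bT_{\alpha^2}$ on Fourier expansions: comparing the $(n_0,\gamma_0)$-coefficients in $\bT_{\alpha^2}f=\lambda(\alpha^2)f$ expresses $\lambda(\alpha^2)\,c(n_0,\gamma_0)$ as a sum, over a finite $n_0$-dependent set of divisors $d$ of $\alpha$ (the common divisors of $\alpha$ with $n_0$ and with the level of $G$), of terms $\varepsilon_d\,d^{\,k-3/2}\,c\!\bigl((\alpha/d)^2 n_0,(\alpha/d)\gamma_0\bigr)$ with $|\varepsilon_d|\le 1$, the leading ($d=1$) term being $c(\alpha^2 n_0,\alpha\gamma_0)$ and the exponent $k-\tfrac32$ of the remaining terms strictly below $k$. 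Plugging in the coefficient bound gives $|\lambda(\alpha^2)|\ll_f\alpha^{k}$ uniformly in $\alpha$, since the divisor range has $O_{n_0}(1)$ elements and each summand is $O_{n_0}(\alpha^k)$. (It is essential to estimate $\lambda(\alpha^2)$ from the formula for $\bT_{\alpha^2}$ directly, rather than by iterating $\bT_{p^2}$, so that the implied constant does not grow with the number of prime divisors of $\alpha$.)

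With this in hand, $\sum_{\gcd(\alpha,N)=1}|\lambda(\alpha^2)|\,\alpha^{-\sigma}\ll_f\sum_{\alpha\ge 1}\alpha^{k-\sigma}$ converges for $\sigma=\mathrm{Re}(s)>k+1$, which is the claimed domain of absolute convergence. On this half-plane the Euler product converges, $L(f,s)=\prod_{p\nmid N}L_p(f,s)$, and $|L_p(f,s)-1|\le\sum_{j\ge 1}|\lambda(p^{2j})|\,p^{-j\sigma}\ll_f p^{\,k-\sigma}$, so $\sum_p|L_p(f,s)-1|<\infty$ and the product converges to a finite value which is non-zero provided no local factor vanishes. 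By the local Hecke relations of Proposition~\ref{prop:hecke-alg}, each $L_p(f,s)$ is a rational function of $p^{-s}$ with quadratic denominator and with a numerator whose zeros, if any, lie on a fixed vertical line with real part $<k+1$ (namely $\mathrm{Re}(s)=k-\tfrac32$ in Shimura's normalization); hence $L_p(f,s)\neq 0$ throughout $\mathrm{Re}(s)>k+1$, so $L(f,s)\neq 0$ there as well.

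The hard part is the eigenvalue bound: one must pin down from~\cite{BS} the precise shape of the Fourier-coefficient action of the vector-valued operators $\bT_{\alpha^2}$ — in particular that, beyond the leading term $c(\alpha^2 n,\alpha\gamma)$, the only corrections are indexed by the finitely many common divisors of $\alpha$ with $n$ (and the level of $G$), carrying a $d$-power of exponent below $k$ — and then keep the resulting estimate uniform in $\alpha$. After that the convergence is a comparison with the zeta function, and the non-vanishing is the standard fact that an absolutely convergent Euler product with non-vanishing local factors is non-zero.
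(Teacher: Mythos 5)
Your route is genuinely different from the paper's (which, for half-integral $k$, transfers $f$ to scalar-valued forms $F_{\gamma,\chi}=\langle f,v_{\gamma,\chi}\rangle$, identifies the Bruinier--Stein operators with Shimura's operators via a good choice of coset representatives, and then imports Deligne's bound through the Shimura lift together with Purkait's relations to get the clean estimate $|\lambda(p^{2n})|\le 3^np^{n(k-1)}\le p^{nk}$ for $p\ge 3$). As a plan it could work, but as written it has a genuine gap at its central step: the explicit Fourier-coefficient action of the vector-valued operators $\bT_{\alpha^2}$, in the shape you assert, is not something you can quote from \cite{BS}. What is available there is essentially the case $\bT_{p^2}$, and already there your template is wrong: besides $c(p^2n_0,p\gamma_0)$ and the $p^{k-3/2}$-term there is a third term of the form $p^{2k-2}c(n_0/p^2,\cdot)$, which is not indexed by a divisor $d\mid\alpha$ in your parametrization (it happens to be harmless for the bound, but it shows the ``precise shape'' has not been pinned down). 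For prime powers $p^{2n}$ with $n\ge 2$ no such coefficient formula is established in the cited sources, and by multiplicativity of the eigenvalues for coprime arguments the whole problem reduces to exactly these prime powers, so ``using $\bT_{\alpha^2}$ directly'' does not let you bypass them. (Your worry about constants growing with the number of prime factors is moot: a per-prime-power bound $C\,p^{nk}$ with $C$ uniform gives $|\lambda(\alpha^2)|\le C^{\omega(\alpha)}\alpha^{k}\ll_\varepsilon\alpha^{k+\varepsilon}$, which is enough.) So the eigenvalue bound, which you correctly identify as the hard part, is precisely the missing work, and it is the work the paper's detour through Shimura's scalar-valued operators, the Shimura correspondence and Purkait's relations is designed to supply.

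The non-vanishing step has a second gap. With only $|\lambda(p^{2j})|\ll_f p^{jk}$ (an unspecified implied constant), the estimate $|L_p(f,s)-1|\ll_f p^{k-\sigma}$ rules out vanishing of the local factors only for large $p$; for the remaining finitely many primes you appeal to ``the local Hecke relations of Proposition \ref{prop:hecke-alg}'' to claim each $L_p$ is a rational function with quadratic denominator and controlled numerator. But that proposition contains no relation between $\bT_{p^2}$ and $\bT_{p^{2n}}$ (only self-adjointness, commutativity away from $N$, and multiplicativity for coprime arguments), and such local relations for the vector-valued operators are not established in \cite{BS} either; again this is exactly what the paper obtains indirectly, by transporting the question to Shimura's setting where Purkait's explicit relations hold. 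Note that the paper's bound has constant $1$, so every local factor satisfies $|L_p-1|<1$ and is automatically non-zero; if you only achieve an $\ll_f$ bound, you must either upgrade it to a clean bound or supply a separate argument for the small primes, neither of which your proposal currently provides.
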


\begin{proof}
The case when $k\in\ZZ$ is \cite[Proposition 3.3]{Ma24} and the result is slightly stronger. Now we consider the case when $k\in\ZZ+\frac{1}{2}$, then $\sign(G)$ is odd.  We will establish the theorem by comparing Bruinier--Stein's Hecke operators with the scalar-valued Hecke operators defined by Shimura \cite{Shimura73}.  

Since $\sign(G)$ is odd, the oddity formula \cite[p. 383 (30)]{CS87} implies that $4\mid N$. There is a map \[\Gamma_0(4)\stackrel{\kappa}{\longrightarrow} \widetilde{\GL^+_2}(\RR)\] given by
        \[\begin{pmatrix}
            a&b\\c&d
        \end{pmatrix}\mapsto \Big(\begin{pmatrix}
            a&b\\c&d
        \end{pmatrix}, \epsilon_d^{-1}\left(\frac{c}{d}\right)\sqrt{c\tau+d}\Big),\]
        where $\epsilon_d=1$ if $d\equiv 1\bmod 4$, $\epsilon_d=i$ if $d\equiv -1\bmod 4$, and $\left(\frac{\cdot}{\cdot}\right)$ is the Jacobi symbol. This map lifts to $\Mp_2(\ZZ)$ when restricted to $\Gamma_1(4)$.
       We denote by  $\Delta(N)$ the image of any principal congruence subgroup $\Gamma(N)\subset \Gamma_1(4)$ under this lift. For a character $\chi$ of $(\ZZ/N\ZZ)^\times$ and $\gamma\in G$, define the element $$v_{\gamma,\chi}\coloneqq \sum\limits_{d\in(\ZZ/N\ZZ)^\times}\chi(d)^{-1}\fre_{d\gamma}.$$ Since the Weil representation is trivial on $\Delta(N)$, the function $F_{\gamma,\chi}\coloneqq \langle f,v_{\gamma,\chi} \rangle$ is then an element in $\Cusp_k(\Delta(N))$, where  $\Cusp_k(\Delta(N))$ is the space of scalar-valued cusp forms of weight $k$ and level $\Delta(N)$  (cf.~\cite{Shimura73}). 

We extend $\kappa$ to 
\[ \bigcup_{\alpha=1}^\infty\Gamma_0(4)\begin{pmatrix}
    \alpha^2&0\\0&1
\end{pmatrix}\Gamma_0(4)\stackrel{\kappa}{\longrightarrow} \widetilde{\GL^+_2}(\RR) \]
by $\left(\begin{smallmatrix}
    \alpha^2&0\\0&1
\end{smallmatrix}\right)\mapsto\btg_\alpha$. For every prime $p$ satisfying $\gcd(p,N)=1$, choose an element $R_{p^{2n-a}}\in \SL_2(\ZZ)$ as a lift of the element $\left(\begin{smallmatrix}
        p^{-2n+a}&0\\0&p^{2n-a}
    \end{smallmatrix}\right)\in\SL_2(\ZZ/N\ZZ)$ and consider the set
 \[
        \Sigma_{p^{2n}}\coloneqq \left\{\kappa\left(R_{p^{2n-a}}\begin{pmatrix}
        p^{2n-a}&bN\\0&p^{a}
    \end{pmatrix}\right)\mid 0\leq a\leq 2n\right., 
    \left.0\leq b<p^{a},~\gcd(b,p^{\min\{a,2n-a\}})=1\right\}.\]
    Since the matrix $R_{p^{2n-a}}\left(\begin{smallmatrix}
        p^{2n-a}&bN\\0&p^{a}
    \end{smallmatrix}\right)$ is congruent to $\left(\begin{smallmatrix}
        1&0\\0&p^{2n}
    \end{smallmatrix}\right)$ modulo $N$, its image under $\kappa$ lies in the metaplectic \textit{double} cover of $\GL_2(\RR)$. As shown in \cite[\S 4]{Wr14},  this set can serve as a system of representatives for
\[
        \Mp_2(\ZZ)\backslash\Mp_2(\ZZ)\left(\begin{pmatrix}p^{2n} & 0 \\0 & 1\end{pmatrix} ,1\right)\Mp_2(\ZZ)\cong \Delta(N)\backslash\Delta(N)\left(\begin{pmatrix}1 & 0 \\0 & p^{2n}\end{pmatrix} ,p^n\right)\Delta(N).\]
      
The advantage of choosing $\Sigma_{p^{2n}}$ as coset representatives is that the actions of these representatives via the Weil representation are particularly simple. Let $\tilde{g}=\left(g=\left(\begin{smallmatrix}
        a&b\\c&d
    \end{smallmatrix}\right),\phi_g(\tau)\right)\in \Mp_2(\ZZ)$ with $N\mid b,~N\mid c$ . For $\gamma\in G$, we have $\rho_G(\tilde{g})\fre_{\gamma}=\chi_G(\tilde{g})\fre_{d\gamma}$ for an explicit character $\chi_G$ (cf.~\cite[Thm. 5.4]{Bo00}). It follows that for every $\tilde{g}\in\Sigma_{p^{2n}}$, we have
    \[\fre_{\gamma}\mid [\tilde{g}]=
    \fre_{p^{-n}\gamma}.\] 

In \cite{Shimura73}, Shimura introduced a Hecke operator 
    \begin{equation*}
        \cT^{\Delta(N)}_{\alpha^2}:\Cusp_k(\Delta(N))\to \Cusp_k(\Delta(N))
    \end{equation*}
   defined by 
   \[\cT^{\Delta(N)}_{\alpha^2}(F)=\alpha^{k-2}\sum\limits_{\tg\in \Delta(N)\backslash\Delta(N)\left(\begin{pmatrix}\begin{smallmatrix}1 & 0 \\0 & \alpha^{2n}\end{smallmatrix}\end{pmatrix},\alpha^n\right)\Delta(N)} F\mid_{k} [\tg]. \]
   By the discussions above, it is easy to see that 
   \[\langle \bT_{\alpha^2}(f), v_{\gamma,\chi}\rangle =\chi(p^{n})^{-1}
   \cT^{\Delta(N)}_{p^{2n}}(F_{\gamma,\chi}). \]
    Now suppose $f$ is a non-zero simultaneous eigenform for all Hecke operators in $\{\bT_{\alpha^2}:\gcd(\alpha, N)=1\}$ with eigenvalues $\lambda(\alpha^2)$. Since the elements $v_{\gamma,\chi}$ span $\CC[G]$ and $f\neq 0$, one can find a pair $(\gamma,\chi)$ such that $F_{\gamma,\chi}\neq 0$. Then $F_{\gamma,\chi}$ is a non-zero simultaneous eigenform for all Hecke operators in $\{\cT^{\Delta(N)}_{\alpha^2}:\gcd(\alpha, N)=1\}$ with eigenvalues $\chi(\alpha)
    \lambda(\alpha^2)$. We claim that the series
    $$\sum_{\substack{\alpha\geq 1 \\ \gcd(\alpha, N)=1}}\frac{\chi(\alpha)
    \lambda(\alpha^2)}{\alpha^s}$$
    is absolutely convergent for $\mathrm{Re}(s)>k+1$. Hence $L(f,s)$ is also absolutely convergent for $\mathrm{Re}(s)>k+1$. Since we have a formal product expansion
    \[L(f,s)=\prod_{\gcd(p, N)=1}\sum_{n=0}^{\infty}{\frac{\lambda(p^{2n})}{p^{ns}}},\]
    general theory regarding Euler products now shows that the absolute convergence for $\mathrm{Re}(s)>k+1$ implies $L(f,s)\neq 0$.

    Now we prove the claim. First observe that $F_{\gamma,\chi}(N\cdot)\in\Cusp_k(\Delta_0(N^2),\psi)$ for some character $\psi$ of $(\ZZ/N^2\ZZ)^\times$. Furthermore, we have
    \[ (\cT_{\alpha^2}^{\Delta(N)}F_{\gamma,\chi})(N\cdot)=\cT_{\alpha^2}^{\Delta_0(N^2),\psi}(F_{\gamma,\chi}(N\cdot)). \]
    This is an analog of the classical way of treating a modular form of level $\Gamma(N)$ as a modular form of level $\Gamma_0(N^2)$ with a Nebentypus. Now let $p$ be a prime such that $\gcd(p,N)=1$. By the theory of Shimura lifts, $\chi(p)
    \lambda(p^2)$ is the $p$-th Fourier coefficient of a normalized cuspidal Hecke eigenform of weight $2k-1$. Hence the Ramanujan conjecture proved by Deligne implies that
    \[\left|\chi(p)
    \lambda(p^2)\right|\leq 2p^{k-1}.\]Since $p\geq 3$, by the explicit algebraic relations between the Hecke operators $\{\cT_{p^{2n}}^{\Delta_0(N^2),\psi}\mid n\geq 0\}$ in \cite[Thm. 1.1]{Pu14}, one can inductively prove that \[\left|\chi(p^{n})
    \lambda(p^{2n})\right|\leq 3^np^{n(k-1)}\leq p^{nk}.\] This immediately yields the claim.

\end{proof}

\subsection{The case when $p\mid N$}
The Hecke operators $\{\bT_{\alpha^2}:\gcd(\alpha, N)\neq 1\}$ are trickier since the Hecke algebra generated by $\{\bT_{p^{2n}}:n\geq 1\}$ is in general non-commutative if $p\mid N$ (cf. \cite[Remark 5.7]{BS}). However, regarding our goals, a comparison result of these Hecke operators with certain scalar-valued Hecke operators is sufficient. 

\begin{lemma}\label{lem:vec-sca}
    Fix a prime $p$. Let $f\in\Mod_k(\rho_G)$ and let $\gamma\in G$ satisfy the following conditions $(\ast_p)$ for $p$:
    \begin{itemize}[leftmargin=2em]
        \item $\gamma$ is not divisible by $p$ in $G$;
        \item when $p=2$, there exists $\mu\in G$, $2\mu=0$, such that  $2 \frq(\mu)+(\mu,\gamma)\neq 0\bmod \ZZ$.
    \end{itemize}
        Then 
        \[\langle\bT_{p^{2n}}(f),\fre_\gamma\rangle=\rT^{\frq(\gamma)}_{p^{2n}}\langle f,\fre_{p^{2n}\gamma}\rangle,\]
        where 
        \[\rT^{\frq(\gamma)}_{p^{2n}}(F)\coloneqq p^{(k-2)n}\sum_{b=0}^{p^{2n}-1}\bfe(-b\frq(\gamma))F\mid_k\left[\Big(\left(\begin{array}{cc}1 & b \\0 & p^{2n}\end{array}\right),p^n\Big)\right]\]
        for any scalar-valued function $F$ on $\HH$.
        
\end{lemma}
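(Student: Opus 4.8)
\textbf{Proof strategy for Lemma \ref{lem:vec-sca}.}

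The plan is to unwind both sides of the claimed identity by explicitly choosing a convenient set of coset representatives for the double coset $\widetilde{\bY}_{p^{2n}} = \Mp_2(\ZZ)\cdot\btg_p^n\cdot\Mp_2(\ZZ)$ and tracking how the Weil representation acts on $\fre_\gamma$ for a $\gamma$ satisfying $(\ast_p)$. Concretely, I would first record a standard system of left-coset representatives for $\Mp_2(\ZZ)\backslash\widetilde{\bY}_{p^{2n}}$: one can take (lifts to $\Mp_2(\ZZ)$ of) the matrices $\left(\begin{smallmatrix} a & b \\ 0 & d\end{smallmatrix}\right)$ with $ad = p^{2n}$, $0\le b < d$, suitably completed, together with the extra $\SL_2(\ZZ)$-factors needed to land in $\widetilde{\bY}_{p^{2n}}$ as in \cite[\S4]{Wr14}. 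The key point is the computation $\fre_\lambda\mid[\btg_p] = \fre_{p\lambda}$ together with the action of $n(B)$ and $J_1$ from \S\ref{subsec:weil-rep}: when the upper-triangular representative has lower-right entry a power of $p$, the diagonal/unipotent part of the Weil action sends $\fre_\gamma$ to a scalar multiple of $\fre_{p^{2n}\gamma}$ (after the $\btg$-twist scales $\gamma$ up by $p^{2n}$), and I would isolate exactly which representatives contribute to the $\fre_\gamma$-component of $\bT_{p^{2n}}(f)$.

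The role of condition $(\ast_p)$ is precisely to kill the representatives that would otherwise mix $\fre_\gamma$ with other basis vectors or produce the "wrong" lower-right entry: the hypothesis that $\gamma$ is not divisible by $p$ ensures that the only surviving contributions come from the unipotent representatives $\left(\begin{smallmatrix} 1 & b \\ 0 & p^{2n}\end{smallmatrix}\right)$ (with $\phi = p^n$), on which $\rho_G(n(B))$ acts by the explicit character $\fre_\gamma\mapsto \bfe(b\,\frq(\gamma))\fre_\gamma$ built into the Weil representation via $\frac{1}{2}\tr((\gamma,\gamma)B)$; the $p=2$ clause (existence of a $2$-torsion $\mu$ with $2\frq(\mu)+(\mu,\gamma)\notin\ZZ$) is the extra input needed because at $p=2$ divisibility alone does not separate the relevant cosets — it rules out the anomalous half-integral phases that appear in the dyadic Weil representation. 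After pairing against $\fre_\gamma$ and using adjointness/orthogonality of the $\fre$'s under $\langle-,-\rangle$, the vector-valued Hecke sum collapses to $p^{(k-2)n}\sum_{b=0}^{p^{2n}-1}\bfe(-b\,\frq(\gamma))\,(f_{p^{2n}\gamma}\mid_k[\,(\left(\begin{smallmatrix}1&b\\0&p^{2n}\end{smallmatrix}\right),p^n)\,])$, which is exactly $\rT^{\frq(\gamma)}_{p^{2n}}\langle f,\fre_{p^{2n}\gamma}\rangle$ by the definition of $\rT^{\frq(\gamma)}_{p^{2n}}$. Note $p^{2n}\gamma = 0$ in many cases, so $\fre_{p^{2n}\gamma}$ is typically $\fre_0$; I would not assume this but let the index carry through.

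The main obstacle I anticipate is the careful bookkeeping of coset representatives in the metaplectic (double) cover: one must verify that the chosen lifts $\tilde\delta_i$ genuinely represent $\Mp_2(\ZZ)\backslash\widetilde{\bY}_{p^{2n}}$ (not merely $\SL_2(\ZZ)\backslash\SL_2(\ZZ)\left(\begin{smallmatrix}p^{2n}&0\\0&1\end{smallmatrix}\right)\SL_2(\ZZ)$), that the automorphy factors $\phi_{\delta_i}$ multiply correctly so that the $\alpha^{k-2}=p^{(k-2)n}$ normalization and the $\phi_g(\tau)^{-2k}$ weights match between Definition \ref{hecke} and the scalar operator $\rT^{\frq(\gamma)}_{p^{2n}}$, and — the genuinely delicate part — that under $(\ast_p)$ no off-diagonal representative survives after projection onto $\fre_\gamma$. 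For the $p=2$ case this last verification requires the explicit formula for $\rho_G$ on $\Gamma_0(4)$-type elements (as in \cite[Thm. 5.4]{Bo00}), and checking that the phase $\bfe(-b\,\frq(\gamma))$ is well-defined mod $1$ as $b$ ranges over $\ZZ/p^{2n}\ZZ$, which is where the condition $\frn(G)\mid$ (power of $p$) implicit in restricting to $\fre_\gamma$ with $(\ast_p)$ enters. Everything else is a direct, if somewhat lengthy, unwinding of definitions.
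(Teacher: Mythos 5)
You take essentially the same route as the paper: decompose $\widetilde{\bY}_{p^{2n}}$ into left cosets with upper-triangular representatives $\tilde{\delta}_{a,b}=\bigl(\left(\begin{smallmatrix}p^{2n-a}&b\\0&p^{a}\end{smallmatrix}\right),\sqrt{p^{a}}\bigr)$ (this system is the one from \cite[\S 5.1]{St15}), argue that under $(\ast_p)$ only the $a=2n$ representatives contribute to the $\fre_\gamma$-component, and match the surviving sum with $\rT^{\frq(\gamma)}_{p^{2n}}$. The genuine gap sits exactly at the step that carries all the content: you assert that ``$\gamma$ not divisible by $p$'' kills the $a<2n$ terms and that the $p=2$ clause ``rules out anomalous half-integral phases,'' but you give no mechanism for either. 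What is actually needed, and what the paper invokes, is a support statement for the extended Weil action on the double coset: for every $a<2n$ one has $\fre_\mu\mid[\tilde{\delta}_{a,b}]=\sum_\lambda c_\lambda\fre_\lambda$ with all $\lambda\in G^{p}$ when $p$ is odd, and all $\lambda\in G^{2}\cup G^{2*}$ when $p=2$, by the explicit formulas of \cite{St15}, \cite{BCJ19} (odd $p$) and \cite[Thm. 4.7]{Sc09}, \cite{Str13} ($p=2$). Condition $(\ast_p)$ is then precisely the membership statement $\gamma\notin G^{p}$ (first bullet), respectively $\gamma\notin G^{2*}$ (second bullet at $p=2$), so each pairing $\langle\fre_\mu\mid[\tilde{\delta}_{a,b}],\fre_\gamma\rangle$ with $a<2n$ vanishes. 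Without this input the dyadic case does not close --- the second bullet is not about phases --- and the tools you plan to lean on, namely the coset system of \cite{Wr14} and the $\Gamma_0(4)$-type character formula of \cite[Thm. 5.4]{Bo00}, are the inputs for the proof of Theorem \ref{thm:nonvanish} (comparison with Shimura's scalar operators), not for this lemma.

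Two smaller bookkeeping points. For the surviving representatives $\tilde{\delta}_{2n,b}$ the underlying matrix factors as $\left(\begin{smallmatrix}0&-1\\1&0\end{smallmatrix}\right)\left(\begin{smallmatrix}p^{2n}&0\\0&1\end{smallmatrix}\right)\left(\begin{smallmatrix}0&1\\-1&0\end{smallmatrix}\right)\left(\begin{smallmatrix}1&b\\0&1\end{smallmatrix}\right)$, and the middle factor $\btg_{p^{n}}$ rescales indices by $p^{n}$, not $p^{2n}$; so when you ``let the index carry through'' you must actually compute which component of $f$ pairs nontrivially against $\fre_\gamma$ (compare the displayed identities in the proof of Proposition \ref{prop:hecke-vanish}, where the index appearing is $p^{n}\gamma$), rather than transcribing the exponent from the statement. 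Also, there is no hidden hypothesis that $\frn(G)$ be a power of $p$: in $\rT^{\frq(\gamma)}_{p^{2n}}$ the parameter $b$ runs over honest integers $0\le b<p^{2n}$ matching the $a=2n$ cosets, so no well-definedness issue of the phase arises. Your normalization $\alpha^{k-2}=p^{(k-2)n}$ and the matching of automorphy factors are fine.
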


\begin{proof}
When $\alpha=p^n$,  there is an explicit coset decomposition $$ \widetilde{\bY}_{p^{2n}}=\bigcup_{a,b}\Mp_2(\ZZ) \cdot \Big(\begin{pmatrix}
        p^{2n-a} & b\\ 0 &  p^a
    \end{pmatrix}, \sqrt{p^a}\Big),$$
where $0\leq a\leq 2n,~0\leq b<p^{a}$ and $~\gcd(b,p^{\min\{a,2n-a\}})=1$ (cf.~\cite[\S 5.1]{St15}). Set $\tilde{\delta}_{a,b}=\left(\left(\begin{smallmatrix}
        p^{2n-a} & b\\ 0 &  p^a
    \end{smallmatrix}\right), \sqrt{p^a}\right)$. Then we have 
$$\frac{1}{p^{(k-2)n}}\langle \bT_{p^{2n}}(f), \fre_\gamma \rangle= \sum\limits_{a=0}^{2n}\!\sum\limits_{\substack{0\leq b<p^{a}\\ \gcd(b,p^{\min\{a,2n-a\}})=1}} \! \sum\limits_{\mu\in G} \langle 
 f_\mu\mid_k[\tilde{\delta}_{a,b}]\otimes (\fre_\mu\mid [\tilde{\delta}_{a,b}]), \fre_\gamma\rangle. $$
From the definition of $\rT_{p^{2n}}^{\frq(\gamma)}$, it suffices to show $$\langle \fre_\mu\mid [\widetilde{\delta}_{a,b}] , \fre_\gamma\rangle=0 $$ for every $a<2n$. The action of  $\tilde{\delta}_{a,b}$ is well-understood. For instance,  if $p$ is odd, there is an explicit formula of $\fre_\mu\mid [\tilde{\delta}_{a,b}]$ proved in \cite[Thm. 5.2]{St15} and \cite[Proposition 5.3]{BCJ19}. In general, it is of the form 
\begin{equation}\label{eq:weil-general}
    \fre_\mu\mid [\tilde{\delta}_{a,b}]=\sum_{\lambda} c_\lambda\fre_\lambda
\end{equation}
with $\lambda \in G^p$ if $p$ is odd, and $\lambda\in G^2\cup G^{2\ast}$ 
if $p=2$ (cf.~\cite[Thm. 4.7]{Sc09} and \cite[Thm. 1]{Str13}).  As $\gamma$ satisfies the condition $(\ast_p)$, it follows that $\langle \fre_\mu\mid [\widetilde{\delta}_{a,b}] , \fre_\gamma\rangle=0 $ for every $a<2n$. 
\end{proof}

By using this lemma inductively, one formally obtains a useful result for later detecting when a modular form is annihilated by those Hecke operators not coprime to $N$.
\begin{proposition}[{\cite[Cor. 3.5]{Ma24}}]\label{prop:hecke-vanish}
Fix a finite set $S$ of primes. Let $f\in\Mod_k(\rho_G)$ and let $\gamma,\mu\in G$ satisfy the following conditions:
    \begin{itemize}[leftmargin=2em]
        \item $\left(\prod\limits_{p\in S}p\right)\gamma=\left(\prod\limits_{p\in S}p\right)\mu$ and $\frq(\gamma)=\frq(\mu)$;
        \item for any $p\in S$, $\gamma,\mu$ satisfy conditions $(\ast_p)$.
    \end{itemize}
 Set
\[v_{\gamma,S}^\mu\coloneqq \sum_{I \subset S}(-1)^{|I|}\fre_{\gamma_{I}^{\mu}},\]
 where $\gamma_{I}^{\mu}\in G$ is the element whose $p$-adic component is equal to that of $\mu$ when $p\in I$ and other $p$-adic components remain the same as those of $\gamma$. Then $\prod\limits_{p\in S}\sum\limits_{n=1}^{\infty}\frac{\bT_{p^{2n}}}{p^{ns}}(f)=0$ implies that $\langle f, v^\mu_{\gamma,S}\rangle=0$.
\end{proposition}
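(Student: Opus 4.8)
The plan is to prove the statement by induction on $|S|$, each step stripping off one prime $p_0\in S$ and invoking Lemma \ref{lem:vec-sca} once. The first thing to do is rewrite the hypothesis. Since the operators $\bT_{\alpha^2}$ commute and satisfy $\bT_{\alpha^2}\circ\bT_{\beta^2}=\bT_{(\alpha\beta)^2}$ for coprime $\alpha,\beta$ (Proposition \ref{prop:hecke-alg}), the product $\prod_{p\in S}\sum_{n\geq 1}\bT_{p^{2n}}p^{-ns}$ is a well-defined $\Mod_k(\rho_G)$-valued formal Dirichlet series whose coefficient at $\alpha^{-s}$, for $\alpha=\prod_{p\in S}p^{n_p}$ with all $n_p\geq 1$, equals $\prod_{p\in S}\bT_{p^{2n_p}}$. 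Since distinct such $\alpha$ give linearly independent monomials, the hypothesis is equivalent to $\prod_{p\in S}\bT_{p^{2n_p}}(f)=0$ for every tuple $(n_p)_{p\in S}$ of positive integers; writing $S=S'\sqcup\{p_0\}$, this says precisely that each $\bT_{p_0^{2n}}(f)$, $n\geq 1$, again satisfies the hypothesis of the proposition relative to the smaller set $S'$.

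Next I would record the combinatorial skeleton. Grouping the subsets $I\subseteq S$ according to whether $p_0\in I$ gives the recursion $v^{\mu}_{\gamma,S}=v^{\mu}_{\gamma,S'}-v^{\mu}_{\gamma',S'}$ with $\gamma'\coloneqq\gamma^{\mu}_{\{p_0\}}$, and one checks three elementary facts (writing $\gamma=\sum_q\gamma_q$ for the decomposition of $\gamma\in G$ into primary components): (i) the divisibility hypothesis forces $q(\gamma_q-\mu_q)=0$ for each $q\in S$ — multiplication by the prime-to-$q$ part is invertible on the $q$-primary component — hence $\gamma_q=\mu_q$ for $q\notin S$, and $(\gamma',\mu)$ again satisfies the divisibility hypothesis for $S'$; (ii) since $\QQ/\ZZ$ is the direct sum of its primary parts and $\frq(\gamma_q)-\frq(\mu_q)$ lies in the $q$-primary part of $\QQ/\ZZ$, the equality $\frq(\gamma)=\frq(\mu)$ forces $\frq(\gamma_q)=\frq(\mu_q)$ for every prime $q$, whence $\frq(\gamma^{\mu}_I)=\frq(\gamma)$ for all $I\subseteq S$ and $\frq(\gamma')=\frq(\mu)$; (iii) every intermediate element $\gamma^{\mu}_I$ satisfies $(\ast_p)$ for all $p\in S$ — for $p\notin I$ through $\gamma$, for $p\in I$ through $\mu$, using orthogonality of the primary decomposition of $G$ and of its bilinear pairing (which also settles the extra $p=2$ clause, as it only involves the $2$-part). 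Thus every pair occurring in the recursion meets the proposition's hypotheses and Lemma \ref{lem:vec-sca} applies to each term.

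The heart of the matter is the base case $S=\{p\}$, where $v^{\mu}_{\gamma,\{p\}}=\fre_\gamma-\fre_\mu$ and one must show $\langle f,\fre_\gamma\rangle=\langle f,\fre_\mu\rangle$ given that $\bT_{p^{2n}}(f)=0$ for all $n\geq 1$. Lemma \ref{lem:vec-sca} turns this into $\rT^{\frq(\gamma)}_{p^{2n}}\langle f,\fre_{p^{2n}\gamma}\rangle=0$ for all $n\geq 1$; since $\gamma-\mu$ is $p$-torsion we have $p^{2n}\gamma=p^{2n}\mu$ for $n\geq 1$ and $\frq(\gamma)=\frq(\mu)$, so the analogous identity holds verbatim for $\mu$. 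Unwinding the explicit $q$-expansion action of $\rT^{c}_{p^{2n}}$ — which, up to the $\bfe(-bc)$-twist, extracts the Fourier coefficients supported in $p^{2n}$-multiples — then expresses $\langle f,\fre_\gamma\rangle$ and $\langle f,\fre_\mu\rangle$ through one and the same formula on the $p$-imprimitive locus, while the Hecke vanishing kills the complementary part, giving $\langle f,\fre_\gamma\rangle=\langle f,\fre_\mu\rangle$; this is the computation of \cite[Cor.~3.5]{Ma24}, which goes through once Lemma \ref{lem:vec-sca} is available in odd rank. The inductive step is the same computation with $\fre_\gamma,\fre_\mu$ replaced by $v^{\mu}_{\gamma,S'},v^{\mu}_{\gamma',S'}$: applying the inductive hypothesis to each $\bT_{p_0^{2n}}(f)$ yields $\langle\bT_{p_0^{2n}}(f),v^{\mu}_{\gamma,S'}\rangle=\langle\bT_{p_0^{2n}}(f),v^{\mu}_{\gamma',S'}\rangle=0$ for all $n\geq 1$, and feeding these into Lemma \ref{lem:vec-sca} for $p_0$ — using $p_0^{2n}\gamma^{\mu}_I=p_0^{2n}(\gamma')^{\mu}_I$ for $n\geq 1$ and that all $\rT$-superscripts equal $\frq(\gamma)$ by (ii) — gives $\langle f,v^{\mu}_{\gamma,S'}-v^{\mu}_{\gamma',S'}\rangle=\langle f,v^{\mu}_{\gamma,S}\rangle=0$.

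The step I expect to be the main obstacle is precisely this base-case extraction: the Hecke vanishing a priori only controls $\langle f,\fre_{p^{2n}\gamma}\rangle$, and one must argue that it nonetheless pins down $\langle f,\fre_\gamma\rangle$ modulo its $p$-imprimitive part — exactly the part that gets annihilated. The delicate sub-case is $p=2$, where the Weil-representation support of $\fre_\mu\mid[\widetilde{\delta}_{a,b}]$ spreads from $G^2$ into $G^{2\ast}$ (this is why the second clause of $(\ast_2)$ is imposed); here I would import the required $2$-adic formulas from \cite[Thm.~1]{Str13} and \cite[Thm.~4.7]{Sc09} and otherwise follow \cite[\S 3]{Ma24}.
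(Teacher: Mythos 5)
Your combinatorial frame agrees with the paper's: induction on $|S|$, the splitting $v^{\mu}_{\gamma,S}=v^{\mu}_{\gamma,S\setminus\{p_0\}}-v^{\mu}_{\gamma^{\mu}_{\{p_0\}},S\setminus\{p_0\}}$ (which is the paper's identity, since the vector only sees the $S\setminus\{p_0\}$-components of the target), and your checks (i)--(iii) are exactly the componentwise verifications needed to apply Lemma \ref{lem:vec-sca} and to see that all $\rT$-superscripts equal $\frq(\gamma)$. The gap is in how you use the Hecke hypothesis. First, you replace the vanishing of the operator-valued Dirichlet series by termwise vanishing $\prod_{p\in S}\bT_{p^{2n_p}}(f)=0$, invoking ``linear independence of the monomials $\alpha^{-s}$''. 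That is only legitimate if $s$ is a formal variable or the identity is known on an open set of $s$; in the paper's sole application (proof of Theorem \ref{thm-cusp=theta}) the vanishing is obtained at the single value $s=2k-2-h$ by inverting $\sum_{\gcd(\alpha,N)=1}\bT_{\alpha^2}\alpha^{-s}$, so your reduction proves the implication only under a strictly stronger hypothesis than the one actually available, and the proposition as you prove it could not be used where the paper needs it.

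Second, and more seriously, even granting the termwise reduction, the extraction step fails at exactly the place you call the heart of the matter. From $\bT_{p^{2n}}(f)=0$ for $n\geq 1$, Lemma \ref{lem:vec-sca} gives only $\rT^{\frq(\gamma)}_{p^{2n}}\langle f,\fre_{p^{2n}\gamma}\rangle=0$; since $p^{2n}\gamma=p^{2n}\mu$ and $\frq(\gamma)=\frq(\mu)$, pairing with $\fre_\gamma$ and with $\fre_\mu$ produces literally the same equation, which carries no information about $\langle f,\fre_\gamma\rangle-\langle f,\fre_\mu\rangle$; there is no ``same formula on the $p$-imprimitive locus while the Hecke vanishing kills the complement'' computation, and your inductive step commits the same non sequitur: $\langle\bT_{p_0^{2n}}f,v^{\mu}_{\gamma,S'}\rangle=0$ translates into $\rT^{\frq(\gamma)}_{p_0^{2n}}\langle f,v^{p_0^{2n}\mu}_{p_0^{2n}\gamma,S'}\rangle=0$, not into $\langle f,v^{\mu}_{\gamma,S'}\rangle=0$, and the two pairings you subtract are again identical. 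The mechanism the paper (following Ma) actually uses is the $n=0$ term of each local factor: the lower limit ``$n=1$'' in the displayed hypothesis is a slip for ``$n=0$'' (compare the $n=0$ in the factorization of $\Psi(f)$ in the proof of Theorem \ref{thm-cusp=theta}); one pairs the whole series, identity term included, against the two vectors $v^{\mu^{\gamma}_{\{p\}}}_{\gamma,S\setminus\{p\}}$ and $v^{\mu}_{\gamma^{\mu}_{\{p\}},S\setminus\{p\}}$ via \eqref{eq:inductive} and \eqref{eq:inductive2}; the $n\geq 1$ tails coincide term by term because $p^{2n}\gamma^{\mu}_{\{p\}}=p^{2n}\gamma$ and $p^{2n}\mu^{\gamma}_{\{p\}}=p^{2n}\mu$, so subtracting the two identities cancels the tails and leaves exactly $\langle f,v^{\mu}_{\gamma,S}\rangle=0$. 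Your write-up never brings the identity term into play, and without it neither your base case nor your induction closes; the remarks about the $p=2$ formulas of Scheithauer and Stromberg concern the already-proved Lemma \ref{lem:vec-sca} and do not affect this point.
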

\begin{proof}
By using Lemma \ref{lem:vec-sca}, one can verify the following formula
\begin{equation}\label{eq:inductive}
\begin{aligned}
     \langle \bT_{p^{2n}} (f), v^{\mu^{\gamma}_{\{p\}}}_{\gamma,S\backslash{\{p\}}} \rangle  & = \rT^{\frq(\gamma)}_{p^{2n}}\langle f,  v^{p^n\mu_{\{p\}}^{\gamma}}_{p^n\gamma,S\backslash{\{p\}}}\rangle,
\end{aligned} 
\end{equation}
and 
 \begin{equation}\label{eq:inductive2}
      \langle \bT_{p^{2n}} (f), v^\mu_{\gamma_{\{p\}}^\mu,S\backslash{\{p\}}} \rangle   = \rT^{\frq(\gamma)}_{p^{2n}}\langle f,  v^{p^n\mu}_{p^n\gamma^{\mu}_{\{p\}},S\backslash\{p\}}\rangle.
 \end{equation}
By induction on the cardinality of $S$, one can obtain the assertion by  \eqref{eq:inductive}, \eqref{eq:inductive2} and the fact
    \begin{equation}
        v^\mu_{\gamma, S}=v^{\mu^{\gamma}_{\{p\}}}_{\gamma,S\backslash{\{p\}}}-v^\mu_{\gamma_{\{p\}}^\mu,S\backslash{\{p\}}}.
    \end{equation}
\end{proof}

\section{Modular forms as theta series}\label{sec:mod-theta}\label{sec4}

This section studies the space of cusp forms $\Cusp_k(\rho_M)$ with respect to a positive definite even lattice $M$. Our goal is to show that under certain specific conditions on $\rho_M$,  the space $\Cusp_k(\rho_M)$ is spanned by the theta series. Throughout this section, $M$ denotes a positive definite even lattice of rank $r$.

\subsection{Projection to $\Cusp^\theta_k(\rho_M)$}

In \cite{EZ}, Eichler and Zagier introduced a differential operator transforming a $C^\infty$ function on $\HH_2$ into a $C^\infty$ function on $\HH\times\HH$. To define this operator, we need Gegenbauer polynomials $G^h_r(x,y)$, which are defined by the generating function:
\[\frac{1}{(1-2xT+yT^2)^{\frac{r}{2}-1}}=\sum_{h=0}^{\infty}G^h_r(x,y)T^h,\]
and $G_r^h(1,1)=\binom{r-3+h}{h}$.
The differential operator is then defined by:
\begin{align*}
    \partial_h: C^\infty(\HH_2)&\rightarrow C^\infty(\HH\times \HH)\\
    f&\mapsto G^h_r\left(\frac{1}{2}\frac{\partial}{\partial z_2},\frac{\partial^2}{\partial z_1\partial z_4}\right)f(Z)\mid_{z_2=0}, 
\end{align*}
where the coordinate $Z=\left(\begin{array}{cc}z_1 & z_2 \\z_2 & z_4\end{array}\right)\in\HH_2$. For the theta series, we have the following facts.
\begin{lemma}\label{lem:ortho}
Suppose $h>0$.  Let $\{H_i\}$ be an orthonormal basis for the space of harmonic polynomials of degree $h$. For $(z,z')\in\HH\times \HH\subseteq \HH_2$,  we have 
\begin{equation}\label{eq:theta-decom}
   \Theta^{(2)}_{M,1}(z,z')=\Theta^{(1)}_{M,1}(z)\otimes \Theta^{(1)}_{M,1}(z') 
\end{equation}
and 
\begin{equation}\label{eq:diff-theta-decom}
   ( \partial_h \Theta^{(2)}_{M,1})(z,z')=C \sum\limits_{i} \Theta^{(1)}_{M,F_i}(z)\otimes \Theta^{(1)}_{M,\overline{F}_i}(z')
\end{equation} 
for some non-zero constant $C$.
\end{lemma}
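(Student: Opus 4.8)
The plan is to prove both identities by unwinding the definitions of the theta series \eqref{eq:theta} and the differential operator $\partial_h$, reducing everything to a classical fact about Gegenbauer polynomials. First I would establish \eqref{eq:theta-decom}. Writing $\mathbf{v}=(v_1,v_2)\in(M^\vee)^{(2)}$, the matrix $(\mathbf{v},\mathbf{v})$ is $\begin{pmatrix}(v_1,v_1)&(v_1,v_2)\\(v_1,v_2)&(v_2,v_2)\end{pmatrix}$, so for $Z=\diag(z,z')$ one has $\tfrac12\tr((\mathbf{v},\mathbf{v})Z)=\tfrac12(v_1,v_1)z+\tfrac12(v_2,v_2)z'$; since $F\equiv 1$ and $\fre_{\mathbf{v}+M^2}=\fre_{v_1+M}\otimes\fre_{v_2+M}$ under the identification $\CC[G_M^{(2)}]\cong\CC[G_M]\otimes\CC[G_M]$, the double sum over $(v_1,v_2)$ factors as a product, giving \eqref{eq:theta-decom} directly.

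For \eqref{eq:diff-theta-decom} I would restore the off-diagonal variable $z_2$ and compute $\Theta^{(2)}_{M,1}(Z)$ for general $Z=\begin{pmatrix}z_1&z_2\\z_2&z_4\end{pmatrix}$: it equals $\sum_{v_1,v_2}\bfe\!\left(\tfrac12(v_1,v_1)z_1+(v_1,v_2)z_2+\tfrac12(v_2,v_2)z_4\right)\fre_{v_1+M}\otimes\fre_{v_2+M}$. Applying $G^h_r\!\left(\tfrac12\tfrac{\partial}{\partial z_2},\tfrac{\partial^2}{\partial z_1\partial z_4}\right)$ term by term brings down a polynomial in the entries of the Gram matrix: acting on $\bfe(\cdots)$ the operator $\tfrac12\partial_{z_2}$ produces the factor $\pi i (v_1,v_2)$ and $\partial_{z_1}\partial_{z_4}$ produces $-\pi^2(v_1,v_1)(v_2,v_2)$, so after setting $z_2=0$ the $(v_1,v_2)$-summand acquires the factor $(\pi i)^h\,G^h_r\!\big((v_1,v_2),(v_1,v_1)(v_2,v_2)\big)$, up to normalizing constants absorbed into $C$. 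Here I invoke the standard identity expressing the homogeneous Gegenbauer polynomial $G^h_r(x,y)$ in terms of the \emph{Gegenbauer/zonal harmonic polynomial}: for vectors $u,w$ in $r$-dimensional Euclidean space, $\|w\|^h G^h_r\!\big(\langle u,w\rangle/\|w\|,\,\|u\|^2\big)$ (suitably interpreted) is, as a function of $u$ with $w$ fixed, a harmonic polynomial of degree $h$ in $u$, and symmetrically in $w$; equivalently $G^h_r(\langle u,w\rangle,\langle u,u\rangle\langle w,w\rangle)$ is the degree-$h$ piece reproducing the harmonic projection. Consequently the matrix coefficient $(v_1,v_2)\mapsto G^h_r\big((v_1,v_2),(v_1,v_1)(v_2,v_2)\big)$, viewed in the variable $v_1$ with $v_2$ fixed (and vice versa), is a harmonic polynomial of degree $h$, and the reproducing-kernel property of zonal harmonics gives the expansion $G^h_r\big((v_1,v_2),(v_1,v_1)(v_2,v_2)\big)=c\sum_i H_i(v_1)\overline{H_i(v_2)}$ for any orthonormal basis $\{H_i\}$ of the degree-$h$ harmonics, with $c$ independent of $v_1,v_2$. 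Substituting this back and interchanging the (finite) sum over $i$ with the sum over $(v_1,v_2)$ yields $(\partial_h\Theta^{(2)}_{M,1})(z,z')=C\sum_i\Theta^{(1)}_{M,H_i}(z)\otimes\Theta^{(1)}_{M,\overline{H_i}}(z')$; since any orthonormal basis can be used, relabelling $H_i$ as $F_i$ gives \eqref{eq:diff-theta-decom}.

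The main obstacle is the Gegenbauer step: one must identify the two-variable polynomial produced by $\partial_h$ with the zonal spherical harmonic / reproducing kernel of the space of degree-$h$ harmonic polynomials, and verify that the harmonicity in each variable is exactly what makes each $\Theta^{(1)}_{M,F_i}$ (resp.\ $\Theta^{(1)}_{M,\overline{F_i}}$) a genuine cusp form of the asserted type — this is where the hypothesis $h>0$ enters (for cuspidality) and where one needs the classical addition formula for Gegenbauer polynomials, $G^h_r(\langle u,w\rangle,\|u\|^2\|w\|^2)=c_{h,r}\sum_i H_i(u)\overline{H_i(w)}$, rather than any genuinely new input. The bookkeeping of normalization constants (powers of $\pi$, $i$, and the combinatorial factors from differentiating the exponential, all of which are independent of the lattice vectors) can be swept into the single constant $C$, so no delicate tracking is required; what matters is only that $C\neq 0$, which follows from $G^h_r(1,1)=\binom{r-3+h}{h}\neq 0$ together with the non-degeneracy of the harmonic pairing. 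I would remark that convergence and term-by-term differentiation are justified by the normal convergence of the theta series on compact subsets of $\HH_2$, a standard point I would not belabor.
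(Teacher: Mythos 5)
Your proposal is correct and follows essentially the same route as the paper: the paper obtains \eqref{eq:theta-decom} directly from the definition, exactly as you do, and for \eqref{eq:diff-theta-decom} it simply cites \cite[Prop.~2.7]{Ma24}, whose content is precisely your computation — apply the Gegenbauer operator term by term to pick up the factor $(\pi i)^h G^h_r\bigl((v_1,v_2),(v_1,v_1)(v_2,v_2)\bigr)$ and then use the addition theorem identifying this with the reproducing kernel $c\sum_i H_i(v_1)\overline{H_i(v_2)}$ of the degree-$h$ harmonics. Since the lemma only asserts the identity up to an unspecified non-zero constant and an arbitrary orthonormal basis, your treatment of the normalizations and of term-by-term differentiation (normal convergence) is adequate.
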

\begin{proof}
For the first assertion, this follows from the definition of the theta series. The second assertion then follows from \cite[Proposition 2.7]{Ma24}.

\end{proof}

Let $k=\frac{r}{2}+h$ for some non-negative integer $h$.  When $h=0$, assume further $r>4$. As in \cite{Ma24},  setting $\vartheta_{M,k}\coloneqq \partial_h \Theta_{\Gen(M)}^{(2)}$, we can define a theta lifting map 
\begin{equation}
  \Psi:  \Cusp_k(\rho_M)\to \Mod_k(\rho_M)
\end{equation}
by 
$$ \Psi(f)(z')= \int_{\SL_2(\ZZ)\backslash \HH} \langle f(z),\vartheta_{M,k}(z,-\overline{z'}) \rangle y^{k} \frac{\rmd x \rmd y}{y^2}$$
regarded as a function in $z'$. Then we have 
\begin{proposition}
Let $k, M$ be as above. The linear map $\Psi$ is diagonalizable and surjective onto $\Cusp^\theta_k(\rho_M)$. Consequently, there is a decomposition
\[\Cusp_k(\rho_M)=\Cusp^{\theta}_k(\rho_M)\oplus \ker\Psi.\]
\end{proposition}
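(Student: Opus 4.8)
The plan is to exploit the fact that the theta lifting $\Psi$ is, up to harmless normalization, a self-adjoint operator on $\Cusp_k(\rho_M)$ with respect to the Petersson inner product, so that diagonalizability and the image–kernel decomposition follow from the spectral theorem for self-adjoint operators on a finite-dimensional Hermitian vector space. First I would record the key pairing identity: for $f, g \in \Cusp_k(\rho_M)$, unfolding the double integral defining $\langle \Psi(f), g\rangle_{\mathrm{Pet}}$ against the $\SL_2(\ZZ)\times\SL_2(\ZZ)$-invariance built into $\vartheta_{M,k} = \partial_h\Theta^{(2)}_{\Gen(M)}$ should give
\[
\langle \Psi(f), g\rangle_{\mathrm{Pet}} = \langle f, \Psi(g)\rangle_{\mathrm{Pet}},
\]
possibly after checking that $\overline{\vartheta_{M,k}(z,-\bar z')}$ is symmetric in the two arguments in the appropriate sense — this uses that the Gegenbauer-twisted genus theta series has real (rational, even) Fourier coefficients and that the orthonormal basis $\{H_i\}$ in Lemma \ref{lem:ortho} can be chosen so that complex conjugation permutes the $F_i$. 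Granting self-adjointness, $\Psi$ is diagonalizable and $\Cusp_k(\rho_M) = \operatorname{im}\Psi \oplus \ker\Psi$ orthogonally.

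The second point is to identify $\operatorname{im}\Psi$ with $\Cusp^\theta_k(\rho_M)$. One inclusion is essentially formal from Lemma \ref{lem:ortho}: by \eqref{eq:diff-theta-decom}, $\vartheta_{M,k}(z,-\bar z')$ is (a constant times) a finite sum $\sum_{L\in\Gen(M)}\sum_i c_L\,\sigma^*\Theta^{(1)}_{L,F_i}(z)\otimes\overline{\sigma^*\Theta^{(1)}_{L,\overline F_i}(-\bar z')}$, so pairing against $f(z)$ and integrating expresses $\Psi(f)(z')$ as an explicit linear combination of the $\sigma^*\Theta^{(1)}_{L,\overline F_i}$, whence $\operatorname{im}\Psi \subseteq \Cusp^\theta_k(\rho_M)$. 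For the reverse inclusion I would argue that $\Psi$ restricted to $\Cusp^\theta_k(\rho_M)$ is nondegenerate: if $f\in\Cusp^\theta_k(\rho_M)$ lies in $\ker\Psi$, then by self-adjointness $f$ is Petersson-orthogonal to $\operatorname{im}\Psi$, but $f$ is itself a combination of the generating theta series $\sigma^*\Theta^{(1)}_{L,F}$ which (again via Lemma \ref{lem:ortho} applied with $F=H_i$) all appear in $\operatorname{im}\Psi$ up to the nonzero constant $C$; hence $\langle f,f\rangle_{\mathrm{Pet}}=0$ and $f=0$. Therefore $\ker\Psi\cap\Cusp^\theta_k(\rho_M)=0$, and combined with $\operatorname{im}\Psi\subseteq\Cusp^\theta_k(\rho_M)$ and $\operatorname{rank}\Psi + \dim\ker\Psi = \dim\Cusp_k(\rho_M)$, a dimension count forces $\operatorname{im}\Psi = \Cusp^\theta_k(\rho_M)$ and $\ker\Psi = (\Cusp^\theta_k(\rho_M))^{\perp}$, giving the stated direct sum decomposition.

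The main obstacle I anticipate is the convergence/unfolding bookkeeping needed to justify self-adjointness rigorously: the kernel $\vartheta_{M,k}$ is built from a nonholomorphic genus theta series differentiated by $\partial_h$, and one must verify that the relevant double integral over $(\SL_2(\ZZ)\backslash\HH)^2$ converges absolutely (this is where the hypothesis $r>4$ when $h=0$, equivalently $k = \tfrac r2 > 2$, enters, ensuring the weight is large enough for the Poincaré/Eisenstein-type estimates) and that Fubini applies so the two pairings can be compared. The algebraic heart — that $\partial_h$ of the genus theta series decomposes as in Lemma \ref{lem:ortho} and that $\Theta^{(1)}_{L,F}$ is cuspidal for $h>0$ by \cite{Bo98} — is already available; the analytic justification of the adjointness identity, essentially as in the corresponding step of \cite{Ma24}, is the part requiring care.
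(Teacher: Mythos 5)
Your overall strategy is close in spirit to the paper's: both arguments rest on Lemma \ref{lem:ortho}, which exhibits $\vartheta_{M,k}$ as a \emph{finite-rank} kernel, so that $\Psi(f)$ becomes a finite Gram-type sum $\sum_i c_i\langle f, v_i\rangle_{\mathrm{Pet}}\, v_i$ with $v_i$ the theta series $\sigma^*\Theta^{(1)}_{L,F_i}$ and $c_i>0$ (up to one overall nonzero constant). In particular your worries about unfolding and Fubini are largely beside the point: once Lemma \ref{lem:ortho} is invoked, self-adjointness (up to that overall constant) is elementary linear algebra, and this is exactly how the paper proceeds, quoting Lemma \ref{lemma-proj} to conclude diagonalizability and surjectivity onto $\mathrm{Span}\{v_i\}=\Cusp^\theta_k(\rho_M)$. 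However, your proof as written has two genuine gaps.

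First, the reverse inclusion $\Cusp^\theta_k(\rho_M)\subseteq\operatorname{im}\Psi$ is circular. You deduce from $f\in\Cusp^\theta_k(\rho_M)\cap\ker\Psi$ and self-adjointness that $f\perp\operatorname{im}\Psi$, and then conclude $f\perp f$ because the generating theta series ``all appear in $\operatorname{im}\Psi$''. But that each $\sigma^*\Theta^{(1)}_{L,F_i}$ lies in $\operatorname{im}\Psi$ is precisely the surjectivity you are trying to prove; Lemma \ref{lem:ortho} only tells you what the kernel function is, not that the individual $v_i$ are hit. The correct (and easy) repair is the one encoded in Lemma \ref{lemma-proj}: for $f\in\ker\Psi$ compute $0=\langle\Psi(f),f\rangle_{\mathrm{Pet}}=C\sum_i c_i\,|\langle f,v_i\rangle_{\mathrm{Pet}}|^2$ with $c_i>0$ (the $C_{L,\sigma}$ are positive), so $f\perp v_i$ for all $i$; hence $\ker\Psi\perp\Cusp^\theta_k(\rho_M)$, and your dimension count then goes through. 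Second, your argument silently assumes $h>0$: when $h=0$ (allowed here, with $r>4$) the forms $\Theta^{(1)}_{L,1}$ are \emph{not} cuspidal, so the ``formal'' inclusion $\operatorname{im}\Psi\subseteq\Cusp^\theta_k(\rho_M)$ fails as stated — a priori you only get $\operatorname{im}\Psi\subseteq\Mod^\theta_k(\rho_M)$. The paper handles this by invoking the Siegel--Weil identity $\Theta^{(1)}_{\Gen(M)}=\bE^{(1)}_{k,M}$ (Theorem \ref{thm:Eis=Theta}, which is where the hypothesis $r>4$ is really used, not for convergence of your double integral) together with $\langle f,\bE^{(1)}_{k,M}\rangle_{\mathrm{Pet}}=0$, rewriting $\Psi(f)$ as a combination of the cuspidal differences $\sigma^*\Theta^{(1)}_{L,1}-\Theta^{(1)}_{\Gen(M)}$; you need this (or an equivalent constant-term argument) to land in $\Cusp^\theta_k(\rho_M)$ and to have a cuspidal generating family to which the positivity argument applies.
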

\begin{proof}
    The proof is essentially \cite[Proposition 6.2]{Ma24}. According to Lemma \ref{lem:ortho}, $\Psi(f)(z')$ is equal to
 \begin{align*}
        &\int_{\SL_2(\ZZ)\backslash \HH}\left(\sum\limits_{L\in \Gen(M)} \sum\limits_{\sigma\in \mathrm{Iso}(G_M,G_L)} C_{L,\sigma} \langle f(z), \sigma^{\ast}\partial_h \Theta_{L,1}^{(2)}(z,-\overline{z'}) \rangle \right) y^{k} \frac{\rmd x \rmd y}{y^2} \\
        =&\sum\limits_{i}\sum\limits_{L\in \Gen(M)} \sum\limits_{\sigma\in \mathrm{Iso}(G_M,G_L)} C_{L,\sigma} \langle f,  \sigma^\ast \Theta_{L,F_i}^{(1)}\rangle_{\rm Pet} \cdot \sigma^\ast \Theta_{L,F_i}^{(1)}(z')
\end{align*}
    where the constant $C$ and the harmonic polynomials $\{H_i\}$ are those in Lemma \ref{lem:ortho}, and $$\Theta_{\Gen(M)}^{(2)}=\sum\limits_{L\in \Gen(M)} \sum\limits_{\sigma\in \mathrm{Iso}(G_M,G_L)} C_{L,\sigma} \sigma^\ast\Theta^{(2)}_{L,1} $$ is given in \eqref{eq:gen-theta}. When $h>0$, as $\Theta_{L,F_i}^{(1)}$ is cuspidal,  we have $\Psi(f)\in \Cusp^\theta_k(\rho_M)$. When $h=0$, we have all $H_i=1$. Note that $ \Theta_{\Gen(M)}^{(1)}=\bE^{(1)}_{k,M}$ (by Theorem \ref{thm:Eis=Theta}) is orthogonal to $f$ under the Petersson inner product, one can rewrite $\Psi(f)$ as 
 $$\sum\limits_{L\in \Gen(M)} \sum\limits_{\sigma\in \mathrm{Iso}(G_M,G_L)} C_{L,\sigma}\langle f, \sigma^{*}\Theta_{L,1}-\Theta^{(1)}_{\Gen(M)}\rangle_{\rm Pet}\cdot (\sigma^{*}\Theta_{L,1}-\Theta^{(1)}_{\Gen(M)}).$$  
Since $\sigma^\ast\Theta_{L,1}-\Theta^{(1)}_{\Gen(M)}$ is cuspidal,   the above formula implies $\Psi(f)\in \Cusp^\theta_k(\rho_M)$ as well. The assertion now follows from the Lemma  \ref{lemma-proj} below.     
\end{proof}
    \begin{lemma}[{\cite[Lem. 6.1]{Ma24}}]\label{lemma-proj}
    Let $V$ be a $\CC$-vector space with an inner product $\langle\cdot,\cdot\rangle$, and let $\{v_i\}_{i\in I}\subset V$ be an arbitrary finite family. Then the map $f:V\rightarrow V$ defined by $$f(v) = \sum_{i\in I}\langle v,v_i\rangle \cdot v_i$$ is diagonalizable and surjective onto $\mathrm{Span}\{v_i:i\in I\}$.
    \end{lemma}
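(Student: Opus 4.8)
The plan is to reduce everything to the finite-dimensional subspace $W \coloneqq \mathrm{Span}\{v_i : i \in I\}$. First observe that, with $\langle\cdot,\cdot\rangle$ linear in its first argument (and conjugate-linear in the second), the map $f$ is $\CC$-linear, its image is contained in $W$, and $f$ vanishes identically on the orthogonal complement $W^\perp = \{v \in V : \langle v, v_i\rangle = 0 \text{ for all } i \in I\}$. Since $W$ is finite-dimensional and $\langle\cdot,\cdot\rangle$ is a (positive-definite) inner product, orthogonal projection onto $W$ — built from an orthonormal basis of $W$ obtained by Gram--Schmidt applied to the $v_i$ — yields a direct sum decomposition $V = W \oplus W^\perp$, valid even when $V$ is infinite-dimensional. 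Hence it suffices to understand the restriction $f|_W \colon W \to W$.

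The next step is to show that $f|_W$ is self-adjoint and positive semi-definite. For $v, w \in V$ one computes directly $\langle f(v), w\rangle = \sum_{i} \langle v, v_i\rangle \langle v_i, w\rangle = \langle v, f(w)\rangle$, and in particular $\langle f(v), v\rangle = \sum_{i} |\langle v, v_i\rangle|^2 \geq 0$. By the spectral theorem for self-adjoint operators on a finite-dimensional complex inner product space, $f|_W$ admits an orthonormal eigenbasis of $W$ with nonnegative real eigenvalues. Concatenating this eigenbasis with any basis of $W^\perp$, on which $f$ acts as $0$, gives an eigenbasis of $f$ on all of $V$; therefore $f$ is diagonalizable.

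For the surjectivity claim it remains to check that $f|_W$ is a bijection of $W$, i.e. that $\ker(f|_W) = 0$. If $v \in W$ satisfies $f(v) = 0$, then $0 = \langle f(v), v\rangle = \sum_i |\langle v, v_i\rangle|^2$, which forces $\langle v, v_i\rangle = 0$ for every $i$, so $v \in W \cap W^\perp = \{0\}$. An injective endomorphism of the finite-dimensional space $W$ is surjective, so $f(W) = W$, and consequently $\operatorname{im}(f) = f(W) + f(W^\perp) = W = \mathrm{Span}\{v_i : i \in I\}$.

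The argument is essentially routine finite-dimensional linear algebra, so I do not expect a real obstacle. The only two points needing a moment's care are: the reduction to the finite-dimensional setting when $V$ is infinite-dimensional (which is handled cleanly by the decomposition $V = W \oplus W^\perp$ for finite-dimensional $W$ and a positive-definite form), and fixing the sesquilinearity convention so that $f$ is $\CC$-linear and $f|_W$ self-adjoint — after which diagonalizability follows from the spectral theorem and surjectivity from the positive-definiteness estimate above.
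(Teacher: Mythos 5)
Your proof is correct: the paper itself gives no argument for this lemma (it is quoted from \cite[Lem.\ 6.1]{Ma24}), and your route --- restrict to $W=\mathrm{Span}\{v_i\}$ via $V=W\oplus W^\perp$, observe that $f$ is self-adjoint and positive semi-definite, apply the spectral theorem for diagonalizability, and use $\langle f(v),v\rangle=\sum_i|\langle v,v_i\rangle|^2$ to get injectivity (hence surjectivity) of $f|_W$ --- is essentially the same standard argument as in the cited source. No gaps; the two points you flag (the sesquilinearity convention and the reduction to finite dimensions) are exactly the ones that need care and you handle both correctly.
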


Thus, a statement like "$\Cusp=\Cusp^{\theta}$" is equivalent to "$\ker\Psi=0$".

\subsection{$\Psi$ as Hecke operators}

Let $\alpha\in \ZZ_{>0}$. Define $K_\alpha(z,z'):\HH\times \HH\to \CC[G_M]\otimes \CC[G_M]$ by 
\begin{equation}
     K_\alpha(z,z')=\sum\limits_{\gamma\in G_M} \sum\limits_{\tg=(g,\phi_g(z))\in \widetilde{\bY}_{\alpha^2} } \frac{\phi_g(z)^{-2k}}{(z'+g\cdot z)^k}\rho_M(\widetilde{g})^{-1}(\fre_\gamma)\otimes \fre_\gamma.
\end{equation}
More explicitly, if $g=\begin{pmatrix}
    a & b\\ c& d
\end{pmatrix}$ and $\phi_g(z)=\pm\sqrt{cz+d}$, then 
\begin{equation}\label{eq:kern-eq1}
    \frac{\phi_g(z)^{-2k}}{(z'+g(z))^k}=\frac{(\pm 1)^{2k}}{(az+b+czz'+dz')^k}.
\end{equation}
As a function of $z$, one can check $\bT_{\alpha^2}(K_\alpha(z,z'))=\alpha^{2k-2}K_\alpha(z,z')$. As shown in \cite[Proposition 3.7]{Ma24} (see also Appendix \ref{appHK} for the odd signature case), it can be viewed as the kernel function of $\bT_{\alpha^2}$ up to a scalar, i.e.  it satisfies 
\begin{equation}\label{eq:ker-Heck}
    \frac{\bT_{\alpha^2}(f)(z')}{\alpha^{2k-2}}=C\int_{\SL_2(\ZZ)\backslash \HH}\langle f,  K_\alpha(z,-\overline{z}')\rangle y^{k-2}\rmd x \rmd y,
\end{equation}
for some non-zero constant $C$.

\begin{theorem}\label{thm:psi=hecke}
Assume $r=\rank(M)>6$, and let $k=\frac{r}{2}+h$ for some non-negative integer $h$. For any $f\in\Cusp_k(\rho_M)$, we have $$\Psi(f) =C'\sum\limits_{\alpha=1}^\infty \frac{1}{\alpha^{2k-2-h}}\bT_{\alpha^2}(f).$$
for some non-zero constant $C'$.
\end{theorem}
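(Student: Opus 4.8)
The strategy is to compare the two integral representations that have already been set up: the theta lifting $\Psi$ is built from the kernel $\vartheta_{M,k} = \partial_h \Theta_{\Gen(M)}^{(2)}$, while the Hecke operators $\bT_{\alpha^2}$ are represented by the kernels $K_\alpha(z,z')$ via \eqref{eq:ker-Heck}. So the whole statement reduces to an identity of kernel functions, namely that $\vartheta_{M,k}(z, -\overline{z'})$ agrees, up to a universal non-zero constant, with $\sum_{\alpha=1}^\infty \alpha^{-(2k-2-h)} \cdot \alpha^{2k-2} K_\alpha(z,-\overline{z'})$, i.e.\ with $\sum_\alpha \alpha^{h} K_\alpha(z,-\overline{z'})$, after pairing against an arbitrary cusp form $f$. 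First I would make this precise: plug the formula \eqref{eq:ker-Heck} into the proposed right-hand side and collect the series, so that the theorem becomes the assertion that $\int_{\SL_2(\ZZ)\backslash\HH}\langle f, \vartheta_{M,k}(z,-\overline{z'}) - C''\sum_\alpha \alpha^h K_\alpha(z,-\overline{z'})\rangle y^{k-2}\,\rmd x\,\rmd y = 0$ for all $f \in \Cusp_k(\rho_M)$.

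\textbf{Key steps.} Second, I would identify $\Theta_{\Gen(M)}^{(2)}$ with the Siegel--Eisenstein series $\bE^{(2)}_{k,M}$ using Theorem \ref{thm:Eis=Theta} (valid since $r/2 > 3$, which is exactly the hypothesis $r > 6$), and then expand $\bE^{(2)}_{k,M}$ over cosets of $\widetilde{\Gamma}_\infty^{(2)}$ in $\Mp_4(\ZZ)$. Restricting to the diagonal $\HH\times\HH\subseteq\HH_2$ and applying the Eichler--Zagier operator $\partial_h$ term by term, the coset sum for $\Sp_4$ should reorganize, via the embedding $\iota: \Mp_2\times\Mp_2 \hookrightarrow \Mp_4$ and the Klingen-type parabolic, into a sum indexed by $\SL_2(\ZZ)$-cosets together with an extra integer parameter; this extra parameter is precisely the $\alpha$ that labels the double cosets $\widetilde{\bY}_{\alpha^2}$. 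The Gegenbauer polynomial $G^h_r$ is what produces the weight factor: differentiating the degree-$(r/2-1)$ factor of the Eisenstein summand against $\partial^2/\partial z_1 \partial z_4$ and setting $z_2 = 0$ turns the off-diagonal entry into the numerator $(z'+g\cdot z)^{-k}$ appearing in $K_\alpha$, with the $\alpha^h$ weighting coming from the homogeneity of $G^h_r$. This is essentially the content of \cite[Proposition 2.7]{Ma24} combined with \cite[Proposition 3.7]{Ma24}, and I would follow those computations, checking that the half-integral-weight (odd signature) case introduces only the expected metaplectic factors $\phi_g$ and no sign obstruction — the relevant half-integral bookkeeping having been done in the Appendix (Theorems \ref{thm:vecSW} and the odd-signature analogue of \cite[Proposition 3.7]{Ma24}). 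Third, once the kernel identity $\vartheta_{M,k}(z,z') = C''\sum_{\alpha\geq 1}\alpha^{h} K_\alpha(z,z')$ is established (with absolute/normal convergence for $r$ large enough, guaranteed by $k > d+1 = 3$), I would substitute it into the definition of $\Psi(f)$, interchange the integral with the (normally convergent) sum over $\alpha$, and read off $\Psi(f) = C''\sum_\alpha \alpha^h \cdot \frac{\bT_{\alpha^2}(f)}{C\,\alpha^{2k-2}} \cdot (\text{const})$ from \eqref{eq:ker-Heck}, giving the stated formula with $C' = C''/C$ up to the harmless constants, and $2k-2-h$ in the exponent since $2k-2 - h = 2k - 2 - h$.

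\textbf{Main obstacle.} The delicate point is the combinatorial reindexing in the middle step: carefully matching the $\widetilde{\Gamma}_\infty^{(2)}\backslash\Mp_4(\ZZ)$ cosets, after restriction to the diagonal and application of $\partial_h$, with pairs $(\SL_2(\ZZ)\text{-coset}, \alpha)$ and then with the double-coset representatives of $\widetilde{\bY}_{\alpha^2}$ used to define $\bT_{\alpha^2}$ — including getting the metaplectic $\phi_g$-factors and the power of $\alpha$ exactly right, and justifying the interchange of summation and integration (which needs the growth estimates underlying the normal convergence of the Eisenstein series, hence the hypothesis $r > 6$). A secondary technical point is confirming that the odd-signature Weil representation does not obstruct any of the manipulations that \cite{Ma24} carries out only for even signature; this is where the Appendix results are invoked. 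Modulo these bookkeeping checks, the proof is a direct translation of the unfolding arguments of \cite[Prop. 2.7, Prop. 3.7, Prop. 6.2]{Ma24} to the present setting.
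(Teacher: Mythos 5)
Your plan is essentially the paper's own proof: identify $\Theta^{(2)}_{\Gen(M)}$ with the Siegel--Eisenstein series of degree two via the Siegel--Weil formula (Theorem \ref{thm:Eis=Theta}, which indeed needs exactly $r>6$ for $d=2$), expand over $\widetilde{\Gamma}^{(2)}_\infty\backslash\Mp_4(\ZZ)$, restrict to $\HH\times\HH$, apply $\partial_h$ term by term so that the Gegenbauer homogeneity produces the $\alpha^h$ weight, reindex the cosets by pairs (element of $\widetilde{\bY}_{\alpha^2}$, $\alpha$) using the Appendix \ref{appB} parametrization, and then feed the resulting kernel identity into $\Psi$ together with \eqref{eq:ker-Heck}. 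One caveat: the coset parametrization also contains the degenerate classes with $\alpha=0$, which you never mention, and your asserted exact kernel identity $\vartheta_{M,k}=C''\sum_{\alpha\ge1}\alpha^h K_\alpha$ is false when $h=0$; in that case the degenerate part contributes an extra term $\bE^{(1)}_{k,M}(z)\otimes\bE^{(1)}_{k,M}(z')$, which is harmless only because it is orthogonal to cusp forms --- a point your initial ``pair against $f\in\Cusp_k$'' formulation already accommodates, but which should be made explicit (for $h>0$ the factor $|\alpha|^h$ kills these terms, as in the paper). A second, purely notational slip: the Eisenstein series identified with $\Theta^{(2)}_{\Gen(M)}$ has weight $\tfrac r2$, not $k=\tfrac r2+h$; the weight $k$ only appears after applying $\partial_h$.
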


\begin{proof}
Let $\widetilde{C}\coloneqq h! \binom{r-3+h}{h} \binom{k-1}{h}$. For $\tau=(z,z')\in \HH\times \HH\subseteq \HH_2$, we claim that 
\begin{equation}\label{eq:dec}
    \vartheta_{M,k}(\tau) =\partial_h \Theta^{(2)}_{\Gen(M)} \\
    = \begin{cases}
       \displaystyle\bE_{k,M}^{(1)}(z)\otimes  \bE_{k,M}^{(1)}(z')+\frac{\bfe (\sign(M)/8)}{|G_M|^{\frac{1}{2}}}\sum\limits_{\alpha=1}^\infty K_\alpha(z,z')&\hbox{if $h=0$}\\  \\
 \displaystyle \widetilde{C}\frac{\bfe (\sign(M)/8)}{|G_M|^{\frac{1}{2}}}\sum\limits_{\alpha=1}^\infty \alpha^h K_\alpha(z,z') &\hbox{if $h>0$}
   \end{cases}
   \end{equation}
If the claim holds,  note that 
\[
    \int_{\SL_2(\ZZ)\backslash \HH} \langle f(z),  \bE_{k,M}^{(1)}(z)\otimes  \bE_{k,M}^{(1)}(-\overline{z'})) \rangle y^{k} \frac{\rmd x \rmd y}{y^2} \\
    =\langle f(z), \bE_{k,M}^{(1)}(z) \rangle_{\rm Pet}\cdot  \bE_{k,M}^{(1)}(z')=0\]
 for any $f\in \Cusp_k(\rho_M)$, then we have  
   \[\begin{aligned}
          \Psi(f)(z')&=\int_{\SL_2(\ZZ)\backslash \HH} \langle f(z),\vartheta_{M,k}(z,-\overline{z'}) \rangle y^{k} \frac{\rmd x \rmd y}{y^2}\\  
&= \widetilde{C}\frac{\bfe (-\sign(M)/8)}{|G_M|^{\frac{1}{2}}} \sum\limits_{\alpha=1}^\infty \int_{\SL_2(\ZZ)\backslash \HH} \langle f(z), \alpha^h K_\alpha(z,-\overline{z'}) \rangle y^{k} \frac{\rmd x \rmd y}{y^2}\\ & = \widetilde{C}\frac{\bfe (-\sign(M)/8)}{|G_M|^{\frac{1}{2}}} C^{-1}\sum\limits_{\alpha=1}^\infty \frac{1}{\alpha^{2k-2-h}}\bT_{\alpha^2}(f)(z'),
 \end{aligned}\]
by using \eqref{eq:dec} and \eqref{eq:ker-Heck}.

To prove the claim, let us first collect some facts. By Theorem \ref{thm:Eis=Theta}, we have $$\Theta_{\Gen(M)}^{(2)}=\bE^{(2)}_{\frac{r}{2},M}=\sum\limits_{\tg^{(2)}\in \widetilde{\Gamma}_\infty^{(2)}  \backslash \Mp_{4}(\ZZ)} \left(1\mid_{\frac{r}{2}}[\tg^{(2)}]\right)  \rho_{M}^{(2)}(\tg^{(2)})^{-1}(\fre_0\otimes \fre_0).$$ 

It's shown in Appendix \ref{appB} that for any coset $[\tg^{(2)}]\in \widetilde{\Gamma}_\infty^{(2)}  \backslash \Mp_{4}(\ZZ)$, it is uniquely determined by a pair  
     $$(\varphi(\tg^{(2)}), \alpha)\in {\bf \widetilde{Mat}}_2(\ZZ)\times \ZZ^{\leq 0},$$ 
(up to $\pm 1$ when $\alpha=0$),  where $\varphi(\tg^{(2)})=(g,\phi_g)$ satisfies $g \in \bY_{\alpha^2}$ and
\begin{equation}\label{eq:kern-eq}
    \frac{\phi_g(z)^{-2k}}{(z'+g(z))^k}=\phi_{g^{(2)}}(z,z')^{-2k}.
\end{equation}
Moreover, we can find a representative $\tg^{(2)}_0\in [\tg^{(2)}]$ such that 
\begin{equation}\label{eq:weil-decom}
 \rho^{(2)}_M(\tg^{(2)}_0)^{-1}(\fre_0\otimes \fre_0)=\frac{\bfe (\sign(M)/8)}{|G_M|^{\frac{1}{2}}}\sum\limits_{\gamma\in G_M} (\fre_{\gamma}\mid[\varphi(\tg^{(2)})])\otimes \fre_\gamma.
\end{equation}

As shown in the proof of \cite[Thm. 4.7]{Ma24},  recall $k=\frac{r}{2}+h$,   we have 
\begin{equation}\label{eq:diff}
    \begin{aligned}
    \partial_h\left(1\mid_{\frac{r}{2}}[\tg^{(2)}]\right)(\tau)&=  G_r^h(1,1)  |\alpha|^h \prod\limits_{i=0}^{h-1} \left(\frac{r}{2}+i\right) \left(1\mid_{k} [\tg^{(2)}] \right)(\tau) \\ &= \widetilde{C}  |\alpha|^h \left(1\mid_{k} [\tg^{(2)}] \right)(\tau).
\end{aligned}
\end{equation} 
for $\tau=(z,z')\in \HH\times \HH$, where the non-zero constant $\widetilde{C}=h! \binom{r-3+h}{h} \binom{k-1}{h}$.

Now we are ready to derive the formula. Let us first assume $h>0$. By \eqref{eq:diff}, \eqref{eq:weil-decom} and \eqref{eq:kern-eq}, we then have

\begin{equation*}
\begin{aligned}
    \partial_h\bE^{(2)}_{\frac{r}{2},M}(z,z')&= \sum\limits_{\alpha=0}^\infty \sum_{\substack{\tg^{(2)}\in \widetilde{\Gamma}_\infty^{(2)} \backslash \Mp_4(\ZZ)\\ \varphi(\tg^{(2)})\in \widetilde{\bY}_{\alpha^2} }} \partial_h \left(1\mid_{\frac{r}{2}}[\tg^{(2)}]\right)(z,z')  \rho_{M}^{(2)}(\tilde{g}^{(2)})^{-1}(\fre_0\otimes \fre_0)\\&= \widetilde{C}\sum\limits_{\alpha=1}^\infty \sum_{\varphi(\tg^{(2)})\in\widetilde{\bY}_{\alpha^2}} \alpha^h \left(1\mid_{k}[\tg^{(2)}]\right) (z,z')  \rho_{M}^{(2)}(\tilde{g}^{(2)})^{-1}(\fre_0\otimes \fre_0)  \\&=\widetilde{C} \frac{\bfe (\sign(M)/8)}{|G_M|^{\frac{1}{2}}}\sum\limits_{\alpha=1}^\infty \sum_{\varphi(\tg^{(2)})\in\widetilde{\bY}_{\alpha^2}} \alpha^h \left(1\mid_{k}[\tg^{(2)}]\right) (z,z')\left(\sum\limits_{\gamma\in G_M} (\fre_{\gamma}\mid[\varphi(\tg^{(2)})])\otimes \fre_\gamma\right)
    \\&= \widetilde{C} \frac{\bfe (\sign(M)/8)}{|G_M|^{\frac{1}{2}}} \sum\limits_{\alpha=1}^\infty \frac{1}{2} \sum_{\tg \in \widetilde{\bY}_{\alpha^2} }  \alpha^h\frac{\phi_g(z)^{-2k}}{(z'+g\cdot z)^k}\left(\sum\limits_{\gamma \in G_M} (\fre_{\gamma}\mid[\tg])\otimes \fre_\gamma\right)\\ &= \widetilde{C}\frac{\bfe (\sign(M)/8)}{|G_M|^{\frac{1}{2}}} \frac{1}{2}\sum\limits_{\alpha=1}^\infty \alpha^h K_\alpha(z,z').
\end{aligned}
\end{equation*}

When $h=0$, one has to deal with the determinant zero part and show that  $$\frac{\bfe (\sign(M)/8)}{|G_M|^{\frac{1}{2}}}\frac{1}{2}\sum\limits_{\tg\in \widetilde{\bY}_0/\pm 1} \frac{\phi_g(z)^{-2k}}{(z'+g\cdot z)^{k}} \sum\limits_{\gamma \in G_M}(\fre_{\gamma}\mid[\tg])\otimes \fre_\gamma=\bE_{k,M}(z)\otimes \bE_{k,M}(z').$$  The argument is essentially the same (see \cite[Thm. 4.6]{Ma24}) and  we omit the details. 

\end{proof}

\subsection{Injectivity of the projection}
 
\begin{theorem}\label{thm-cusp=theta}
 Assume that $M$ is a positive definite even lattice of level $N$ and rank $r>6$. Furthermore, assume that for all primes $p$, $M\otimes \ZZ_p$ splits a hyperbolic plane.   For  $k \geq  r/2$, we have $$\Cusp^\theta_k(\rho_{M})=\Cusp_k(\rho_{M}).$$
\end{theorem}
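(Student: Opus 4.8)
The plan is to combine the spectral picture built in the previous two sections: by Lemma \ref{lemma-proj} and the preceding proposition, $\Psi$ is diagonalizable with image exactly $\Cusp^\theta_k(\rho_M)$, so it suffices to show $\ker\Psi = 0$. By Theorem \ref{thm:psi=hecke}, on $\Cusp_k(\rho_M)$ the operator $\Psi$ agrees, up to a nonzero constant, with $\sum_{\alpha\geq 1}\alpha^{-(2k-2-h)}\bT_{\alpha^2}$. So I want to show that no nonzero cusp form is annihilated by this Dirichlet-type series of Hecke operators.

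First I would reduce to eigenforms: by Corollary \ref{cor:basis}, $\Cusp_k(\rho_M)$ has a basis of simultaneous eigenforms for $\{\bT_{\alpha^2} : \gcd(\alpha,N)=1\}$, and since $\Psi$ (hence the full Hecke-series operator) is self-adjoint and commutes with these, it is enough to rule out $\Psi f = 0$ for $f$ such an eigenform. Write $N = \prod_{p} p^{e_p}$ and split the Dirichlet series as an Euler-type product $\sum_\alpha \alpha^{-s}\bT_{\alpha^2} = \big(\prod_{p\nmid N}\sum_{n\geq 0} p^{-ns}\bT_{p^{2n}}\big)\circ\big(\prod_{p\mid N}\sum_{n\geq 0}p^{-ns}\bT_{p^{2n}}\big)$ with $s = 2k-2-h = r/2 + h - 2 \geq 1$ (using $r>6$, so $s>1$, which is where the rank hypothesis enters to guarantee convergence in Theorem \ref{thm:nonvanish}). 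On the prime-to-$N$ part, $f$ is an eigenform with eigenvalue system $\{\lambda(\alpha^2)\}$, so this factor acts by the scalar $L(f,s) = \prod_{p\nmid N}\sum_{n}\lambda(p^{2n})p^{-ns}$, which by Theorem \ref{thm:nonvanish} is \emph{nonzero} for $\mathrm{Re}(s) > k+1$; but here $s = r/2+h-2$, and since $k = r/2+h$ we have $s = k-2 < k+1$, so I must be more careful.

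This gap is the main obstacle, and I expect the resolution to be: rather than invoke convergence of $L(f,s)$ directly at $s = 2k-2-h$, decompose $\Psi f$ further using the $p\mid N$ factors and apply Proposition \ref{prop:hecke-vanish}. Concretely, for the primes $p\mid N$, the bad Hecke operators $\bT_{p^{2n}}$ need not commute, but $\Psi f = 0$ forces $\prod_{p\mid N}\big(\sum_n p^{-ns}\bT_{p^{2n}}\big)$ applied to (the prime-to-$N$ Hecke-projection of) $f$ to vanish; pairing with the auxiliary vectors $v^\mu_{\gamma,S}$ of Proposition \ref{prop:hecke-vanish} (taking $S = \{p : p\mid N\}$) kills the contribution of those vectors. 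One then checks, as in \cite[\S 6]{Ma24}, that the span of the $v^\mu_{\gamma,S}$ satisfying conditions $(\ast_p)$ for all $p\mid N$ is all of $\CC[G_M]$ — and this is precisely where the hypothesis that $M\otimes\ZZ_p$ splits a hyperbolic plane for every $p$ is used: splitting $U$ over $\ZZ_p$ produces, in $G_M$ at every prime, enough elements $\gamma$ not divisible by $p$ (and, for $p=2$, with a companion $\mu$ making $2\frq(\mu)+(\mu,\gamma)\neq 0$) so that the relevant $v^\mu_{\gamma,S}$ span. Combined with the nonvanishing of the scalar factor coming from the good primes (which, after the eigenform reduction, reduces to the $p\nmid N$ Euler factors being nonzero individually — each is $\sum_n \lambda(p^{2n})p^{-ns}$, a local factor of a Shimura-lifted $\mathrm{GL}_2$ $L$-function, nonvanishing in the relevant range by the Ramanujan bound estimates in the proof of Theorem \ref{thm:nonvanish}), this forces $\langle f, v\rangle = 0$ for all $v$ in a spanning set, hence $f = 0$.

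Thus $\ker\Psi = 0$, and therefore $\Cusp^\theta_k(\rho_M) = \mathrm{image}(\Psi) = \Cusp_k(\rho_M)$. The one delicate point deserving care is the interplay between the non-commutativity of the bad Hecke operators and the Euler-product factorization of $\Psi$; I would handle this exactly as in \cite{Ma24}, applying Lemma \ref{lem:vec-sca} and Proposition \ref{prop:hecke-vanish} one prime at a time and using that the good-prime operators commute with everything, so that the vanishing of $\Psi f$ propagates to the vanishing of each pairing $\langle f, v^\mu_{\gamma,S}\rangle$. $\qquad\blacksquare$
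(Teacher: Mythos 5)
Your overall strategy is the paper's: reduce to $\ker\Psi=0$, rewrite $\Psi$ via Theorem \ref{thm:psi=hecke}, factor the Dirichlet series of Hecke operators into good and bad primes, invert the good part on a basis of eigenforms using Theorem \ref{thm:nonvanish}, and treat the bad primes with Proposition \ref{prop:hecke-vanish}. But the first ``gap'' you flag is only a miscalculation, and your workaround for it is not sound: with $k=\tfrac{r}{2}+h$ one has $2k-2-h=r+h-2$, not $\tfrac{r}{2}+h-2$, and $r+h-2>k+1$ is equivalent to $r>6$, so Theorem \ref{thm:nonvanish} applies verbatim at $s=2k-2-h$ --- this is exactly where the rank hypothesis is used, and it is how the paper proceeds. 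Your substitute, nonvanishing of the individual good Euler factors at $s=k-2$ ``by the Ramanujan bound,'' would not be justified: below the abscissa of absolute convergence the bound $|\lambda(p^{2n})|\le 3^np^{n(k-1)}$ gives neither convergence nor nonvanishing of those factors.

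The genuine gap is in your endgame. Every vector $v^\mu_{\gamma,S}$ admissible for Proposition \ref{prop:hecke-vanish} requires $\gamma$ and $\mu$ to satisfy $(\ast_p)$ for all $p\in S$, so each basis vector $\fre_\delta$ occurring in such a $v$ has all its $p$-components non-divisible by $p$; in particular $\fre_0$, and any $\fre_\delta$ with a $p$-divisible component, never lies in their span. Hence these vectors do not span $\CC[G_M]$ once $N>1$, and pairing $f$ against them only controls the components of $f$ at non-divisible $\gamma$ --- it cannot force $f=0$. Moreover the hypothesis that $M\otimes\ZZ_p$ splits a hyperbolic plane does not create such elements inside $G_M$ (if anything it makes $G_M$ smaller); its actual role, and the idea missing from your proposal, is the auxiliary-lattice trick: by \cite[Lem. 6.6]{Ma24} there is a finite-index sublattice $L\subseteq M$ with $G_L\cong G_M\oplus G'$, where $G'$ is the hyperbolic form $\left(\begin{smallmatrix}0&1/n\\1/n&0\end{smallmatrix}\right)$ on generators $x_1,x_2$ of order $n=\prod_{p\mid N}p$. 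Since $\uparrow_H$ (for $H=\langle x_1\rangle$, with $H^\perp/H\cong G_M$) commutes with the bad-prime operator \cite[Lem. 6.3]{Ma24}, the lift $\uparrow_H(f)$ is also annihilated; the elements $x_1+\gamma$, $x_2+\gamma$ do satisfy $(\ast_p)$ for all $p\mid N$, so Proposition \ref{prop:hecke-vanish} gives $\langle\uparrow_H(f),v^{x_2+\gamma}_{x_1+\gamma,S}\rangle=0$, and since $\downarrow_H\bigl(v^{x_2+\gamma}_{x_1+\gamma,S}\bigr)=\fre_\gamma$, adjointness of $\uparrow_H$ and $\downarrow_H$ yields $\langle f,\fre_\gamma\rangle=0$ for every $\gamma\in G_M$, whence $f=0$. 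Without this passage to $L$ your argument does not close.
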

\begin{proof}
As mentioned before, it suffices to show that the linear map $\Psi$ is injective.  When $\sign(M)$ is even, this is exactly \cite[Thm. 6.7]{Ma24}. When $\sign(M)$ is odd, the argument is very similar, and we may sketch the proof for the ease of readers. 

Suppose $f\in\Cusp_k(\rho_{M})$ satisfies $\Psi(f)=0$. By Theorem \ref{thm:psi=hecke},
\begin{align*}
        \Psi(f) &=C'\sum\limits_{\alpha=1}^\infty \frac{\bT_{\alpha^2}}{\alpha^{2k-2-h}}(f) \\
        &=C'\left(\sum_{\substack{\alpha\geq 1 \\ \gcd(\alpha, N)=1}}\frac{\bT_{\alpha^2}}{\alpha^{2k-2-h}}\right)\left(\prod\limits_{p\mid N} \sum_{n=0}^{\infty} \frac{\bT_{p^{2n}}}{p^{n(2k-2-h)}} \right)(f)
    \end{align*}
    for some non-zero constant $C'$. By Corollary \ref{cor:basis}, 
    choose a basis of $\Cusp_k(\rho_{M})$ consisting of simultaneous eigenforms for all Hecke operators in $\{\bT_{\alpha^2}:\gcd(\alpha, N)=1\}$. Now by Theorem \ref{thm:nonvanish}, for each element $g$ in this basis,
    \[\left(\sum_{\substack{\alpha\geq 1 \\ \gcd(\alpha, N)=1}}\frac{\bT_{\alpha^2}}{\alpha^{2k-2-h}}\right)(g)=L(g,2k-2-h)g\neq 0.\]
    Therefore the operator $\sum\limits_{\substack{\alpha\geq 1 \\ \gcd(\alpha, N)=1}}\frac{\bT_{\alpha^2}}{\alpha^{2k-2-h}}$ is invertible, which implies
    \[ \prod\limits_{p\mid N}\sum\limits_{n=1}^{\infty}\frac{\bT_{p^{2n}}}{p^{n(2k-2-h)}}(f)=0. \]
        
Let $n\coloneqq \prod\limits_{p\mid N}p$ be the radical of $N$. By \cite[Lem. 6.6]{Ma24}, according to our assumption on $M$, there exists a sublattice $L\subseteq M$ such that $G_L\cong G_{M}\oplus G'$, where $G'=\left(\begin{array}{cc}0 & \frac{1}{n} \\\frac{1}{n} & 0\end{array}\right) \bmod \ZZ$ under a basis $\{x_1,x_2\}$ and $nx_i=0$.  Then the cyclic subgroup $H=\langle x_1\rangle$ is an isotropic subgroup of $G_L$ and $H^\perp/H\cong G_{M}$.

For any $\gamma \in G_{M}$,  we have 
    \begin{itemize}
    
        \item $x_1 + \gamma, x_2+\gamma$ are not divisible by $p$ in $G_L$ for any $p\mid N$.
        \item when $l$ is even, then
$$2 \frq\left(\frac{nx_1}{2}\right) + \left(\frac{n}{2}x_1,x_2 + \gamma\right) = 2 \frq\left(\frac{nx_2}{2}\right) + \left(\frac{n}{2}x_2,x_1 + \gamma \right)=\frac{1}{2}\neq 0 \mod \ZZ.$$
    \end{itemize}

According to \cite[Lem. 6.3]{Ma24}, also $\prod\limits_{p\mid N}\sum\limits_{n=1}^{\infty}\frac{\bT_{p^{2n}}}{p^{n(2k-2-h)}}\uparrow_{H}(f)=0$. Let $v=v_{x_1+\gamma, x_2+\gamma, S}$ and  then 
\begin{equation*}
    \begin{aligned}
    \downarrow_H(v)&=\sum\limits_{I\subset\{p\mid N\}} (-1)^{|I|} \downarrow_H \fre_{(x_1+\gamma)_{I}^{{x_2+\gamma}}} \\ &=\fre_\gamma
\end{aligned} \end{equation*}  as $(x_1+\gamma)_{I}^{x_2+\gamma}$ is not orthogonal to $H$ unless $I=\varnothing$. Then  
\[\langle f,\fre_\gamma\rangle =\langle \uparrow_H (f), v\rangle = 0,\]
where we used Proposition \ref{prop:hecke-vanish} for the last equality. Since $\gamma$ is arbitrary, this implies that $f=0$.
\end{proof}

\begin{remark}\label{rem:local split}
There is a well-established criterion for determining whether $M\otimes \ZZ_p$ splits a hyperbolic plane, as mentioned in \cite{Ni79}. For example, this occurs if $\rank_p(G_M)<r-2$. In practical terms, this condition is not overly restrictive when $r$ is sufficiently large, as $\rank_p(G_M)\leq r$ always holds.
\end{remark}

\begin{remark}\label{rem:localConditionNecessary}
When $M \otimes \mathbb{Z}_p$ does not split a hyperbolic plane, $\Cusp_k(\rho_M)$ may strictly contain $\Cusp^\theta_k(\rho_M)$. Consider for example $M=L(N)$ where $L$ is unimodular of rank $m$ and $N$ is a square-free integer. For sufficiently large $m$ and $N$, there exist non-zero newforms $f$ of weight $m/2$ for $\Gamma_0(N)$. 

The zero-component of any element in $\Cusp^\theta_k(\rho_M)$ is an oldform. However, by \cite[Thm. 1.1]{SV}, for any such newform $f$, the lift
\[
\sum_{g \in \Gamma_0(N)\backslash \mathrm{SL}_2(\mathbb{Z})} (f \cdot \mathfrak{e}_0)|_{k, G_M} [g]
\]
has a non-zero newform as its zero-component, thus cannot lie in $\Cusp^\theta_k(\rho_M)$.
\end{remark}

\section{Heegner divisors on orthogonal Shimura varieties}\label{sec5}

In this section, we review the general theory of Heegner divisors on orthogonal Shimura varieties and their connections with modular forms.  We develop this correspondence to the Baily--Borel compactifications.

\subsection{Orthogonal Shimura variety and its compactification}
Let \( M \) be an even lattice of signature \((2,n)\).
The Hermitian symmetric domain of type IV associated to \( M \) is
\[
\bD^\pm = \left\{ w \in \PP(M \otimes \CC) \mid \langle w, w \rangle = 0,\, \langle w, \overline{w} \rangle > 0 \right\},
\]
with two connected components. Fix a component \(\bD\subset \bD^\pm\). We denote by \(\rO(M)(\RR)^+\leq \rO(M)(\RR)\) the index-2 subgroup preserving \(\bD\). Given any arithmetic subgroup \(\Gamma \leq \rO(M)(\RR)^+\), we define the orthogonal Shimura variety
\[
\Sh_\Gamma(M) \coloneqq \Gamma \backslash \bD.
\]
\begin{remark}\label{rem:Spinorkernel}
    Note that \(\Gamma^{\pm}\backslash\bD^\pm\cong\Gamma\backslash\bD\) for any \(\Gamma^{\pm} \leq \rO(M)(\RR)\) such that \(\Gamma = \Gamma^{\pm}\cap\rO(M)(\RR)^+\) has index 2 in \(\Gamma^{\pm}\).
\end{remark}

Let $\Pic_\QQ(\Sh_\Gamma (M))$ denote the Picard group of $\Sh_\Gamma(M)$ with rational coefficients. 
Since $\Sh_\Gamma(M)$ has only quotient singularities, it is $\QQ$-factorial, and we have an isomorphism $$\Pic_\QQ(\Sh_\Gamma(M))\cong \Cl_\QQ(\Sh_\Gamma(M)).$$
The tautological line bundle \(\cO(-1)\) on \(\bD\subset \PP(M \otimes \CC)\) descends to an element $\lambda_{M,\Gamma}$ in $\Pic_\QQ(\Sh_\Gamma(M))$, called the \textit{Hodge bundle}.  The \textit{Baily--Borel compactification} \(\overline{\Sh}_\Gamma(M)\) is constructed as 
\[
\overline{\Sh}_\Gamma(M) = \Proj\left( \bigoplus_{m\geq 0} \rH^0(\Sh_\Gamma(M), \lambda_{M,\Gamma}^{\otimes m}) \right),
\]
endowing \(\lambda_{M,\Gamma}\) with an ample extension \(\overline{\lambda}_{M,\Gamma}\). Usually, we will simply write $\lambda_{M,\Gamma}$ if there is no further confusion.

\subsubsection*{\bf Explicit Boundary Structure}  The compactification admits a stratification:
\[
\overline{\Sh}_\Gamma(M) = \Sh_\Gamma(M) \sqcup \bigsqcup_{I} \partial_I \sqcup \bigsqcup_{J} \partial_J,
\]
where:
\begin{itemize}
\item \(I\) ranges over \(\Gamma\)-orbits of primitive isotropic lines in \(M\)
\item \(J\) ranges over \(\Gamma\)-orbits of primitive isotropic planes in \(M\)
\end{itemize}
Each boundary divisor \(\partial_J\) corresponds to a \(\Gamma\)-orbit of isotropic planes \(J \subset M\), with the projection 
\[
\pi_{J^\perp}: \PP(M\otimes \CC) \setminus \PP(J^\perp \otimes \CC) \to \PP((M/J^\perp)\otimes \CC)
\]
inducing the boundary map \(\pi_{J^\perp}(\bD) \hookrightarrow \partial_J\).

In this paper, we primarily investigate two key cases that are closely related to moduli theory:
\begin{enumerate}
    \item $\Gamma$ is a subgroup of the \textit{stable orthogonal group} \[\widetilde{\rO}(M) \coloneqq \ker\left(\rO(M) \to \rO(G_M)\right) \cap \rO(M)(\RR)^+.\] 
    When $\Gamma=\widetilde{\rO}(M)$, we write $\Sh_\Gamma(M)$ as $\Sh(M)$.  Many  moduli space of \textit{lattice-polarized hyper-K\"ahler varieties} arises in this way. 
    
    \item  \(\Gamma=\Gamma_0:=\rO(M) \cap \rO(M)(\RR)^+\), the full orthogonal group. This case governs a lot of moduli spaces of \textit{bounded Calabi--Yau pairs} $(X,D)$.
     \end{enumerate}

\subsection{Heegner Divisors on $\Sh_\Gamma(M)$}
We begin by recalling the  construction of special divisors. For any vector $v \in M^\vee \subseteq M \otimes \QQ$ with $\frq_M(v) < 0$, the \emph{natural divisor} $H_v$ on $\Sh_\Gamma(M)$ is defined as the image of the natural map
\begin{equation}\label{eq:div-map}
  \Gamma_v \backslash \bD(v^\perp) \hookrightarrow \widetilde{\rO}(M) \backslash \bD,
\end{equation}
where $\Gamma_v \subseteq \widetilde{\rO}(M)$ denotes the stabilizer subgroup of $v$.

\begin{definition}
For a subgroup $\Gamma \leq \widetilde{\rO}(M)$, $m \in \QQ_{\leq 0}$, and $\gamma \in G_M$ with $(m,\gamma) \neq (0,0)$, the \emph{Heegner divisor} is defined as
\[
\rH_{m,\gamma} = \Gamma \Bigg\backslash \sum_{\substack{v \in M + \gamma \\ \frq_M(v) = m}} v^\perp
\]
on $\Sh_\Gamma(M)$. Conventionally set $\rH_{0,0} = -\lambda_{M,\Gamma}$ when $(m,\gamma) = (0,0)$. The subspace generated by Heegner divisors is denoted by $\Pic_\QQ(\Sh_\Gamma(M))^{\mathrm{Heegner}}$.
\end{definition}
The Heegner divisor $\rH_{m,\gamma} \in \Cl_\QQ(\Sh_\Gamma(M))$ is generally reducible, and its irreducible components are natural divisors.  In cases where $\gamma = -\gamma$ in $M^\vee/M$, the divisor $\rH_{m,\gamma}$ acquires multiplicity two.

A fundamental property is the compatibility of Heegner divisors with pullback: for subgroups $\Gamma' \leq \Gamma$, the natural projection $\pi: \Sh_{\Gamma'}(M) \to \Sh_{\Gamma}(M)$ satisfies  
\[
\pi^* \rH_{m,\gamma} = \rH_{m,\gamma}^{\Gamma'},
\]  
where $\rH_{m,\gamma}^{\Gamma'}$ denotes the Heegner divisor on $\Sh_{\Gamma'}(M)$. Consequently, $\pi^*$ induces an isomorphism  
\[
\Pic_\QQ(\Sh_\Gamma(M))^{\mathrm{Heegner}} \cong \Pic_\QQ(\Sh_{\Gamma'}(M))^{\mathrm{Heegner}}.
\]  
In particular, the space $\Pic_\QQ(\Sh_\Gamma(M))^{\mathrm{Heegner}}$ is independent of the choice of $\Gamma$.

\begin{example}\label{ex:Fg}
A crucial example is the Shimura varieties associated with the lattice
\[
\Lambda_g \coloneqq \langle 2 - 2g \rangle \oplus E_8(-1)^{\oplus 2} \oplus U^{\oplus 2},
\]  
The global Torelli theorem for K3 surfaces provides a period map isomorphism  
\[
\scF_g \cong \Sh(\Lambda_g),
\]  
establishing a correspondence between Noether--Lefschetz divisors on \(\scF_g\) and Heegner divisors on \(\Sh(\Lambda_g)\).  
\end{example} 

For a primitive vector $u\in M$, we call the positive generator $d$ of the ideal $\langle u,M\rangle$ the \textit{divisibility} of $u$ and set $u^* \coloneqq u/d+M\in G_M$. A basic result is 
\begin{theorem}[cf.~{\cite[Thm. 1.11]{BLMM17}}]
If $\dim \Sh_\Gamma(M)\geq 3$, then $\Pic_\QQ(\Sh_\Gamma(M))$ is spanned by natural divisors.
Furthermore, if $M$ contains two hyperbolic planes and $\Gamma=\widetilde{\rO}(M)$, then \[\Pic_\QQ(\Sh(M))=\Pic_\QQ(\Sh(M))^{\rm Heegner}. \]
\end{theorem}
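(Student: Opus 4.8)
The plan is to deduce the first assertion from the spanning theorem of \cite{BLMM17} and the second from Eichler's criterion on orbits of vectors in lattices splitting two hyperbolic planes.

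\emph{Spanning by natural divisors.} Because $\Sh_\Gamma(M)$ has only finite quotient singularities it is $\QQ$-factorial, so $\Pic_\QQ(\Sh_\Gamma(M))\cong\Cl_\QQ(\Sh_\Gamma(M))$. The essential input is \cite[Thm. 1.11]{BLMM17}: for signature $(2,n)$ with $n\geq 3$, this group is spanned by the classes of special divisors together with the Hodge class $-\lambda_{M,\Gamma}=\rH_{0,0}$, the proof resting on the Kudla--Millson theta lift and the spectral decomposition of the arithmetic quotient. Since the special divisor attached to $v\in M^\vee$ with $\frq_M(v)<0$ is by definition the image of $\Gamma_v\backslash\bD(v^\perp)$ in $\Sh_\Gamma(M)$, that is, precisely the natural divisor $H_v$, this gives the first assertion when $\Gamma$ is a congruence subgroup; for a general arithmetic $\Gamma$ one passes to a finite-index subgroup to which \cite{BLMM17} applies and descends along the finite cover $\pi$, using that every class on $\Sh_\Gamma(M)$ is $\tfrac{1}{\deg\pi}\pi_*$ of its pullback and that the pushforward of a natural divisor is a nonzero multiple of a natural divisor.

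\emph{From natural to Heegner divisors.} Now let $\Gamma=\widetilde{\rO}(M)$ with $M\supseteq U^{\oplus 2}$. Eichler's criterion asserts that for a lattice splitting two hyperbolic planes the $\widetilde{\rO}(M)$-orbit of a vector $v\in M^\vee$ with $\frq_M(v)<0$ depends only on the pair $(\frq_M(v),\,v+M)\in\QQ_{<0}\times G_M$. Hence, for fixed $(m,\gamma)$, all hyperplane sections $v^\perp$ with $v\in M+\gamma$ and $\frq_M(v)=m$ lie in a single $\widetilde{\rO}(M)$-orbit and therefore descend to one and the same irreducible natural divisor on $\Sh(M)$; accounting for multiplicity (a factor $2$ exactly when $2\gamma=0$, where $v^\perp=(-v)^\perp$ is counted twice) gives $\rH_{m,\gamma}=c_{m,\gamma}\,H_v$ with $c_{m,\gamma}\in\{1,2\}$, for any such $v$. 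Conversely, any natural divisor $H_w$ equals $c_{m,\gamma}^{-1}\rH_{m,\gamma}$ after replacing $w$ by the primitive vector of $M^\vee$ on the line $\QQ w$ and setting $m=\frq_M(w)$, $\gamma=w+M$. Thus the $\QQ$-span of the natural divisors coincides with the span of the $\rH_{m,\gamma}$, which is $\Pic_\QQ(\Sh(M))^{\mathrm{Heegner}}$; combined with the first assertion, $\Pic_\QQ(\Sh(M))=\Pic_\QQ(\Sh(M))^{\mathrm{Heegner}}$.

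\emph{Main obstacle.} The substantive difficulty lies entirely in the input \cite[Thm. 1.11]{BLMM17}: proving that the Picard group is exhausted by special divisors is tantamount to the Noether--Lefschetz-type statement in this range and requires the theta correspondence together with control of the cuspidal and residual automorphic spectrum of the relevant orthogonal groups. The remaining steps are elementary, the only delicate points being the distinction between primitivity in $M$ versus in $M^\vee$ and the bookkeeping of the multiplicity-two phenomenon at $2$-torsion classes.
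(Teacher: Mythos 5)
Your first paragraph is fine and matches the paper, which simply quotes \cite[Thm.~1.11]{BLMM17} for the statement that $\Pic_\QQ(\Sh_\Gamma(M))$ is spanned by natural divisors. The problem is in your second paragraph: you apply Eichler's criterion to \emph{arbitrary} vectors $v\in M^\vee$ in a fixed coset, but Eichler's criterion only governs \emph{primitive} vectors (of $M$), with orbit invariants $(v^2, v^*)$. The set $\{\,v\in M+\gamma : \frq_M(v)=m\,\}$ contains imprimitive vectors, and orthogonal transformations preserve primitivity, so these vectors do \emph{not} form a single $\widetilde{\rO}(M)$-orbit. Concretely, for $\gamma=0$ and $m=-4$ the divisor $\rH_{-4,0}$ receives contributions both from primitive $v\in M$ with $v^2=-8$ and from $v=2w$ with $w$ primitive, $w^2=-2$; these give genuinely different natural divisors (different Noether--Lefschetz divisors in the $\scF_g$ picture). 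Hence $\rH_{m,\gamma}$ is in general reducible (as the paper itself notes), and both of your asserted identities $\rH_{m,\gamma}=c_{m,\gamma}H_v$ and $H_w=c_{m,\gamma}^{-1}\rH_{m,\gamma}$ are false; in particular your converse direction, which is the substantive inclusion (natural divisors lie in the span of Heegner divisors), does not go through as written.

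The repair is the step the paper actually uses. Eichler's criterion shows that the \emph{primitive} Heegner divisor
\[
\rP_{m,\gamma}=\widetilde{\rO}(M)\Bigg\backslash\sum_{\substack{v\in M \text{ primitive}\\ \frq_M(v)=m,\ v^*=\gamma}} v^\perp
\]
is irreducible, so the irreducible natural divisors spanning $\Pic_\QQ(\Sh(M))$ are exactly the $\rP_{m,\gamma}$ up to nonzero scalars. Then one expresses each $\rH_{m,\gamma}$ as a sum of primitive Heegner divisors $\rP_{m',\gamma'}$ with $|m'|\le|m|$, a relation that is triangular with nonzero diagonal entries with respect to ordering by $|m|$; inverting this triangular system shows every $\rP_{m,\gamma}$ lies in the $\QQ$-span of the $\rH_{m',\gamma'}$, whence $\Pic_\QQ(\Sh(M))=\Pic_\QQ(\Sh(M))^{\mathrm{Heegner}}$. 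Without this triangular inversion (or an equivalent inductive argument on $|m|$), your proof has a genuine gap precisely at the point where imprimitive vectors enter.
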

\begin{proof}
 For the ease of readers, we recall the proof of the last assertion.    When $M$ contains two hyperbolic planes, Eichler's criterion says two primitive vectors $u$ and $v$ are lying in the same $\widetilde{\rO}(M)$-orbit if $u^2=v^2$ and $u^\ast=v^\ast$. This implies the primitive Heegner divisor
 \begin{equation}\label{eq:prim-heegner}
\rP_{m,\gamma}: = \widetilde{\rO}(M)\backslash  \sum_{\substack{
    v \in M \text{ primitive} \\
    \frq_M(v) = m \\
    v^\ast \equiv \gamma \pmod{M}
  }} \!\!\! v^\perp
\end{equation}
is irreducible.  This shows that $\Pic_\QQ(\Sh(M))$ is generated by the primitive divisors $\rP_{m,\gamma}$.  As there is a triangular relation between the Heegner divisors $\rH_{m,\gamma}$ and the primitive Heegner divisors,  it follows that $\Pic_\QQ(\Sh(M))$ is spanned by $\rH_{m,\gamma}$ as well. 

\end{proof}

When $M$ lacks two hyperbolic planes, Eichler's criterion generally fails. It remains unknown whether $\Pic(\Sh(M))$ is spanned by Heegner divisors.   Nevertheless, if we consider the $\rO(M)$-orbits,  there is a similar result for $p$-elementary lattices. 

\begin{proposition}[generalization of Eichler’s criterion]
Let $p$ be a prime number. Let $M$ be a $p$-elementary even lattice of signature $(2,n)$ which satisfies one of the following conditions:
\begin{itemize}
\item[(i)] $p \neq 2$ and $n>2$;
\item[(ii)] $p = 2$ with $\ell(G_M) \leq n $;
\end{itemize}
For any primitive vectors $u,v \in M$ satisfying
\begin{equation}\label{eq:numerical}
    u^2 = v^2\quad \hbox{and}\quad u^* = v^* \in G_M,
\end{equation} there exists $g \in \Gamma_0$ such that $g(u) = v$.

\end{proposition}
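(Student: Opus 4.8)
The plan is to reduce the statement to the computation of the orbits of the discriminant-kernel group $\widetilde{\rO}(M)$ acting on primitive vectors, and then to analyze the image of $\rO(M)\to\rO(G_M)$. The starting point is Eichler's theorem in the form proved by Gritsenko--Hulek--Sankaran and Nikulin: for an even lattice $M$ splitting one hyperbolic plane $U$, two primitive vectors $u,v\in M$ with $u^2=v^2$ lie in the same $\widetilde{\rO}(M)$-orbit provided they have the same image in $G_M$ \emph{and} the same image in $G_{M}$ ``with orientation data'', but in general one must also allow an action by the image of $\rO(M)$ on $G_M$. More precisely, I would invoke the following: if $M = U \oplus L$ with $L$ $p$-elementary of the appropriate rank, then $\widetilde{\rO}(M)$ acts transitively on the set of primitive vectors $u$ with fixed $u^2$ and fixed $u^*\in G_M$ \emph{up to the subgroup of $\rO(G_M)$ induced from $\rO(M)$}. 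Since the numerical data \eqref{eq:numerical} only pins down $u^*$ as an element of $G_M$, not its $\rO(G_M)$-orbit, the two vectors $u,v$ need not be $\widetilde{\rO}(M)$-equivalent, but they become equivalent after composing with a suitable lift in $\Gamma_0$ of an element of $\rO(G_M)$ — provided that lift exists, i.e.\ provided $\rO(M)\to\rO(G_M)$ is surjective.

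So the key steps are: (1) verify that for a $p$-elementary lattice $M$ of signature $(2,n)$ satisfying (i) or (ii), the reduction map $\rO(M)\to\rO(G_M)$ is surjective; (2) combine this with the single-hyperbolic-plane version of Eichler's criterion to conclude transitivity of the $\rO(M)$-action on $\{u \text{ primitive}: u^2=v^2,\ u^*=v^*\}$. For step (1), the genus-theoretic tool is Nikulin's surjectivity criterion \cite{Ni79}: the natural map $\rO(M)\to\rO(G_M)$ is surjective when $M$ has signature $(t_+,t_-)$ with $t_+,t_-\geq 1$ and $\ell(G_M)\leq \mathrm{rank}(M)-2$ (a ``two hyperbolic planes'' style inequality at every prime), and for $p$-elementary lattices there are refined statements: for $p$ odd it suffices that $n>2$, and for $p=2$ one needs the coparity/$\ell(G_M)\leq n$ hypothesis exactly as stated. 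I would spell this out prime by prime: the $2$-adic component is the only subtle one, and the condition $\ell(G_M)\leq n$ together with signature $(2,n)$ guarantees $M\otimes\ZZ_2$ has enough room (splits a hyperbolic or a $U(2)/V$ summand) for every element of $\rO(G_{M}\otimes\ZZ_2)$ to lift. For step (2), once $u^*=v^*$ in $G_M$ we are reduced to the case $u^*=v^*$ exactly, so the single-plane Eichler criterion (valid because $M\supset U$) gives $g_0\in\widetilde{\rO}(M)$ with $g_0(u)=v$ directly — in fact here no correction by $\rO(G_M)$ is needed since $u^*$ and $v^*$ are literally equal, so $\widetilde{\rO}(M)\subseteq \Gamma_0$ already does the job. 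The role of surjectivity of $\rO(M)\to\rO(G_M)$ is only needed if one wants to phrase the criterion for $\rO(M)$-orbits with $u^*,v^*$ in the same $\rO(G_M)$-orbit rather than equal; for the statement as written, it is the single-plane Eichler criterion that carries the argument, and the $p$-elementary hypotheses (i)/(ii) are what guarantee that criterion applies (they ensure $M$ splits $U$ off after the $p$-elementary normal form, e.g.\ $M\cong U\oplus U(p)^{a}\oplus(\text{definite})$ or $M\cong U\oplus U\oplus E_8(-1)\oplus\cdots$ in small $\ell$).

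The main obstacle I anticipate is the $p=2$ case of the reduction/normal-form analysis. Unlike $p$ odd, where $p$-elementary lattices of signature $(2,n)$ with $n>2$ automatically split a hyperbolic plane over $\ZZ$ and the genus is determined by $(2,n)$ and the rank of $G_M$, the $2$-elementary case has the extra coparity invariant $\delta_{G_M}\in\{0,1\}$ and the possibility that $M$ only splits $U(2)$ rather than $U$. One must check carefully, using Nikulin's classification of $2$-elementary even lattices \cite[Thm.~3.6.2]{Ni79}, that the hypothesis $\ell(G_M)\leq n$ (equivalently $\ell(G_M)\leq \mathrm{rank}(M)-2$) forces $M$ to contain $U$ as an orthogonal summand, or at least forces the Eichler-type transitivity to hold; the borderline cases where $\ell(G_M)=n$ will need individual attention. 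I would handle this by first passing to $M\cong U\oplus M'$ (or $M\cong U(2)\oplus M'$) via the classification, reducing to a statement about the negative-definite $2$-elementary lattice $M'$, and then citing or reproving the relevant orbit-transitivity for vectors in $U\oplus M'$ — this is essentially \cite[Prop.~3.3]{GHS07}-style bookkeeping adapted to $p$-elementary discriminant forms, and the only genuinely new input beyond the cited literature is tracking the characteristic element $\alpha$ of $G_M$ when $\delta_{G_M}=1$.
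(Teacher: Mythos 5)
There is a genuine gap at the heart of your step (2). You assert that because $u^*=v^*$ holds exactly, a ``single-plane Eichler criterion'' (valid whenever $M\supset U$) already produces $g_0\in\widetilde{\rO}(M)$ with $g_0(u)=v$, so that $\widetilde{\rO}(M)\subseteq\Gamma_0$ does the job and the passage to the full group is only needed to move $u^*$ inside its $\rO(G_M)$-orbit. No such criterion exists: Eichler's criterion for $\widetilde{\rO}(M)$ requires $M$ to split \emph{two} hyperbolic planes, and the paper states explicitly that it generally fails when $M$ splits only one. The nontrivial content of the proposition is precisely the case $p=2$, $\ell(G_M)=n$, where $M\cong U\oplus U(2)\oplus L(2)$ contains a single copy of $U$; there, distinct $\widetilde{\rO}(M)$-orbits can carry the same invariants $(u^2,u^*)$, and one must use elements of $\Gamma_0$ acting \emph{nontrivially} on $G_M$ while still sending $u^*$ to $v^*=u^*$ in order to merge them. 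Your mechanism for exploiting the larger group --- surjectivity of $\rO(M)\to\rO(G_M)$ --- cannot see this, since when $u^*=v^*$ no correction in $\rO(G_M)$ is called for; the extra orbits are killed by transformations in the kernel-complement direction, not by adjusting discriminant data. Consequently your proposal never proves the borderline case you yourself flag ($\ell(G_M)=n$), and citing a GHS-style Prop.~3.3 bookkeeping there is circular, as that is exactly the two-plane statement.

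For comparison, the paper's proof handles $p>2$ by citing an existing generalization (Bayer-Fluckiger--type result, their [BC23, Thm.~5.15]) rather than by splitting off $U$ (note that condition (i) alone does not guarantee $M\cong U\oplus(\cdots)$, so your normal-form reduction is also unjustified for odd $p$), and for $p=2$ it splits into $\ell(G_M)\le n-2$ (two hyperbolic planes, classical Eichler) and $\ell(G_M)=n$, where $M\cong U\oplus U(2)\oplus L(2)$. In the latter case the argument is constructive: when $L$ is even one uses Allcock's trick, and when $L$ is odd (so $M\cong U\oplus U(2)\oplus A_1(-1)^{\oplus m}$) one writes down explicit compositions of Eichler transvections $t(e_i,\cdot)$, the swap $e_1\leftrightarrow f_1$, and sign changes $r_i\mapsto -r_i$, checking integrality conditions on $w=(u-v)/d$ (even divisibility and $4\mid w^2$) before applying $t(f_2,w/2)$, with subcases according to whether the $U\oplus U(2)$-components of $u,v$ vanish in $G_M$. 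Some argument of this concrete kind (or a citation that genuinely covers the one-hyperbolic-plane $2$-elementary case) is what your proposal is missing.
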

\begin{proof}

The case $p > 2$ follows from \cite[Thm. 5.15]{BC23}. For $p = 2$, we analyze two cases:

\medskip\noindent
\textbf{Case 1: $\ell(G_M)\leq n-2$}\\
In this case $M$ contains two hyperbolic planes. Eichler's criterion directly applies since $U^{\oplus 2} \subset M$ guarantees transitivity on primitive vectors with matching invariants.

\medskip\noindent
\textbf{Case 2: $ \ell(G_M)=n$}\\
We can decompose $M\cong U\oplus U(2)\oplus L(2)$ where $L=0$ or a negative definite unimodular lattice. Our argument depends on $L$'s parity. When $L$ is even, this can be proved by using Allcock's trick (cf. \cite[Lem. 1]{Allcock99}). See also  \cite[Thm.  2.3.2]{KocaCaner2005OitA} for a complete proof.
If $L$ is odd,  then there is a further decomposition (cf. \cite[Thm. 3.6.2]{Ni79})  $$M \cong U \oplus U(2)\oplus A_1(-1)^{\oplus m}.$$ We may fix the standard basis 
\begin{itemize}
\item $e_1, f_1$ for $U$ with $\langle e_1,f_1 \rangle = 1$,
\item $e_2, f_2$ for $U(2)$ with $\langle e_2,f_2 \rangle = 2$,
\item $\{r_i\}_{1\leq i\leq m}$ for $A_1(-1)^{\oplus m}$ with $r_i^2 = -2$.
\end{itemize}

Let \(u, v \in M\) be primitive with \(u^2 = v^2\) and \(u^* = v^* \in G_M\). Let \(d\) be their divisibility.  Set \(w = (u - v)/d\in M\) and make the decomposition:
\[
u = u_0 + u_R, \quad v = v_0 + v_R, \quad u_0, v_0 \in U \oplus U(2), \quad u_R, v_R \in A_1(-1)^{\oplus m}.
\]
Then we are led to analyze the following subcases.
\vspace{.2cm}

\paragraph*{\bf Subcase (i): \(u_0^* \neq 0\), \(v_0^* \neq 0\)}
As mentioned before, there exist \(h, h' \in \mathrm{O}(U \oplus U(2))\) such that \(h(u_0), h'(v_0) \in U(2)\oplus A_1(-1)^{\oplus m}\). Extend to \(M\) via block matrices:
\[
H = \begin{pmatrix}
h & 0 \\
0 & I_{m}
\end{pmatrix}, \quad H' = \begin{pmatrix}
h' & 0 \\
0 & I_{m}
\end{pmatrix},
\]
where \(I_{m}\) is the identity on \(A_1(-1)^{\oplus m}\). Apply \(H\) and \(H'\) to assume \(u_0, v_0 \in U(2)\). Thus, \(u, v \in U(2) \oplus A_1(-1)^{\oplus m}\).

Choose \(u', v' \in U(2)\oplus  A_1(-1)^{\oplus m}\) with \(\langle u, u' \rangle = d\), \(\langle v, v' \rangle = d\). Define Eichler transvections for isotropic \(x\) and \(y \perp x\):
\[
t(x, y)(\nu) = \nu - \langle y, \nu \rangle x + \langle x, \nu \rangle y - \frac{\langle x, \nu \rangle \langle y, y \rangle}{2} x,
\]
where $x,y\in M\otimes \QQ$.  The composition
\[
g = t(e_1, -v') \circ t(f_1, w) \circ t(e_1, u')
\]
satisfies \(g(u) = v\).
 \vspace{.2cm}
 
\paragraph{\bf Subcase (ii): \(u_0^* = 0\), \(v_0^* = 0\)}  Similarly, assume \(u_0, v_0 \in U\) via transformations as in (i). Thus, \(u, v \in U \oplus A_1(-1)^{\oplus m}\). Take $u',v'\in U\oplus A_1(-1)^{\oplus m}$ with $\langle u,u'\rangle =\langle v,v'\rangle=d $. 
The composition
\[
g' = t(e_2, -v') \circ t(f_2, \frac{w}{2}) \circ t(e_2, u')
\]
sends $u$ to $v$.  The problem is that we have to show \(t(f_2, w/2)\) is an integral transformation. 
This requires:
\begin{itemize}[leftmargin=0.5cm]
    \item [-] the divisibility of $w$ is even. 
\item [-] \(4 \mid w^2\).
\end{itemize}

If \( w \) does not satisfy these conditions, we claim that there exists \(\phi \in \mathrm{O}(U \oplus A_1(-1)^{\oplus m})\) such that \((\phi(u) - v)/d\) satisfies the desired conditions. The transformation \(\phi\) is achieved by possibly composing the following transformations:

\begin{enumerate}[leftmargin=*]
    \item The swap \(\sigma: e_1 \leftrightarrow f_1\). 

    \item For each \(i\), the Eichler transvection \(t(-e_1,r_i)\), which acts on basis as:
   \[
   t(-e_1,r_i): \begin{cases}
   e_1 \mapsto e_1 \\
   f_1 \mapsto f_1 + e_1 -r_i \\
   r_i \mapsto r_i - 2e_1 \\
   r_j \mapsto r_j \quad \forall j \neq i
   \end{cases}
   \]
    \item Sign changes \(r_i \mapsto -r_i\) for indices with odd coefficients.
\end{enumerate}
Here, we use  \(\sigma\) and \( t(-e_1,r_i) \) to ensure that the divisibility of \( w \) is even, while the sign change mapping ensures that \( 4 \mid w^2 \). Note that these transformations do not change the dual of these vectors in \( G_M \).
\vspace{.2cm}

\paragraph{\bf Subcase (iii): \(u_0^* = 0\), \(v_0^* \neq 0\)} Here \(d = 2\). One can use the orthogonal transformations in (i)  and (ii) to normalize our vectors without changing the norm and the dual in $G_M$. After normalization, assume:
\[
u_0 = 2e_1 + u_1, \quad u_1 \in U(2), \quad u_R = r_i+\sum\limits_{j\neq i} c_j r_j, \quad v_0 \in U(2),
\]
for some $i$.
As the transformation $ t(-e_1,r_i)$ maps $(2e_1+r_i)$ to $r_i$,   it sends $u$ in \( U(2) \oplus  A_1(-1)^{\oplus m}\).  Then proceed as in (i) to map \(t(-e_1,r_i)(u)\) to \(v\).

\end{proof}

As an immediate consequence (together with Remark \ref{rem:Spinorkernel}), we obtain the following.

\begin{corollary}\label{cor:Heeg-span}
    Let $\pi:\Sh(M)\to \Sh_{\Gamma_0}(M)$ be the quotient map.  The image of the pullback map 
 \begin{equation}
     \pi^\ast: \Pic_\QQ(\Sh_{\Gamma_0}(M))\to \Pic_\QQ(\Sh(M))
 \end{equation}
 is contained in $\Pic_\QQ(\Sh(M))^{\rm Heegner}$.
\end{corollary}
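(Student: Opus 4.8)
The plan is to reduce to the generalized Eichler criterion just proved, combined with the first part of the theorem recalled before it (i.e.\ \cite[Thm.~1.11]{BLMM17}). First I would apply that theorem to the arithmetic group $\Gamma_0$: since $\dim\Sh_{\Gamma_0}(M)=n\geq 3$, it shows that $\Pic_\QQ(\Sh_{\Gamma_0}(M))$ is spanned by the classes of natural divisors, i.e.\ by the images $D_v$ of $\bD(v^\perp)$ for primitive $v\in M$ with $\frq_M(v)<0$. Since $\pi^\ast$ is $\QQ$-linear, it then suffices to show $\pi^\ast D_v\in\Pic_\QQ(\Sh(M))^{\mathrm{Heegner}}$ for each such $v$.

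The main step is to compute $\pi^\ast D_v$ explicitly. As $\widetilde{\rO}(M)$ has finite index in $\Gamma_0$, the map $\pi$ is a finite quotient, and $\pi^\ast D_v$ is an effective $\QQ$-divisor supported on the image in $\Sh(M)$ of $\bigcup_{g\in\Gamma_0}\bD((gv)^\perp)$. Here the generalized Eichler criterion does the work: it shows the $\Gamma_0$-orbit of $v$ is precisely the set of primitive $w\in M$ with $w^2=v^2$ and $w^\ast$ in the $\rO(G_M)$-orbit of $v^\ast$ realized by the image of $\Gamma_0$ in $\rO(G_M)$; I would invoke Remark \ref{rem:Spinorkernel} to pass between orbits of vectors (resp.\ hyperplanes) for $\rO(M)$ on $\bD^\pm$ and for $\Gamma_0$ on $\bD$, absorbing the sign and orientation bookkeeping. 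Because each such $w$-orbit is a single $\Gamma_0$-orbit and $\Gamma_0$-conjugation preserves the norm, the divisibility and the reflection attached to $w$, the ramification index of $\pi$ is constant along it; hence
\[
\pi^\ast D_v=\sum_{\gamma'}c_{\gamma'}\,\rP_{m,\gamma'},\qquad c_{\gamma'}\in\QQ_{>0},
\]
where the $\rP_{m,\gamma'}$ are the primitive Heegner divisors of \eqref{eq:prim-heegner} and $\gamma'$ runs over the relevant $\rO(G_M)$-orbit. Only membership in a $\QQ$-subspace is needed, so the constants $c_{\gamma'}$ are immaterial.

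It then remains to recall that each $\rP_{m,\gamma'}$ already lies in $\Pic_\QQ(\Sh(M))^{\mathrm{Heegner}}$: the divisors $\rH_{m,\gamma}$ and $\rP_{m,\gamma}$ are related by a triangular system of identities — obtained by sorting the vectors of $M^\vee$ in the definition of $\rH_{m,\gamma}$ by their primitive part, as already used in the proof of the preceding theorem — which is invertible over $\QQ$ by M\"obius inversion and requires no hypothesis on $M$, as it merely regroups hyperplanes. Combining this with the previous display gives $\pi^\ast D_v\in\Pic_\QQ(\Sh(M))^{\mathrm{Heegner}}$, as wanted.

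The one genuinely delicate point will be the second step: carefully matching $\Gamma_0$-orbits of primitive vectors in $M$ (where the generalized Eichler criterion lives) with $\widetilde{\rO}(M)$-orbits of hyperplanes in $\bD$ (which index the components of $\pi^\ast D_v$), while tracking divisibilities, the multiplicity-two phenomenon when $\gamma'=-\gamma'$ in $G_M$, and — through Remark \ref{rem:Spinorkernel} — the interplay between $\rO(M)$, $\Gamma_0=\rO(M)\cap\rO(M)(\RR)^+$ and their actions on $\bD$ versus $\bD^\pm$. The rest is formal.
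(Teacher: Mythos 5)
Your argument is correct and is exactly the reasoning the paper intends: it states the corollary as an "immediate consequence" of \cite[Thm.~1.11]{BLMM17} (natural divisors span $\Pic_\QQ$), the generalized Eichler criterion (so the pullback of a natural divisor is a positive rational combination of the primitive Heegner divisors $\rP_{m,\gamma'}$ over the relevant orbit of $\gamma'$), the triangular relation putting each $\rP_{m,\gamma'}$ in the span of the $\rH_{m,\gamma}$, and Remark~\ref{rem:Spinorkernel} for the $\bD$ versus $\bD^\pm$ bookkeeping. The paper gives no written proof, and your proposal fills in precisely these steps, including the points (constant ramification along a $\Gamma_0$-orbit, multiplicity two when $\gamma'=-\gamma'$) that are immaterial for membership in a $\QQ$-subspace.
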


\subsection{Heegner divisors as Coefficient Functionals}

\begin{definition}
  Define the space of almost cusp forms of weight $k$ and type $\rho_{M}^*$ 
$$\mathrm{ACusp}_k(\rho_{M}^* )\subseteq \Mod_k(\rho_{M}^*) $$  
as the subspace generated by cusp forms and the Siegel--Eisenstein series $E_{k,M}$.  
\end{definition}
 Then we have

\begin{theorem}[\cite{Br14}]\label{thm:NLC}
Assume that $n\geq 3$ and $M$ contains $U\oplus U(N)$ as a direct summand. There is an isomorphism 
\begin{equation*}
 \Phi:   \Pic_\CC(\Sh(M))^{\rm Heegner}\cong \mathrm{ACusp}_{\frac{2+n}{2}}(\rho_M^*)^\vee
\end{equation*}
realized by the \textbf{coefficient functionals}:
\begin{equation}\label{eq:coe-funct}
\rH_{m,\gamma} \longmapsto \Bigg( \underbrace{f = \sum_{\gamma'\in G_M} \sum_{m'} c_{-m',\gamma} q^{m'} \fre_{\gamma'}}_{\text{Almost cusp form}} \ \mapsto \ \boxed{c_{-m,\gamma}} \Bigg).
\end{equation}
\end{theorem}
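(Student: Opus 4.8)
The plan is to reduce the theorem to one equivalence for $\CC$-linear combinations of Heegner divisors, and then to prove that equivalence using Borcherds' theory of singular theta lifts \cite{Bo98} together with a converse theorem. Write $k=\frac{2+n}{2}$, and for $f\in\Mod_k(\rho_M^*)$ let $c_f(j,\gamma)$ be the coefficient of $q^{j}\fre_\gamma$ in its Fourier expansion; the claimed map $\Phi$ sends $\rH_{m,\gamma}$ (with $m\le 0$) to the functional $f\mapsto c_f(-m,\gamma)$, and $\rH_{0,0}=-\lambda$ to the constant-term functional $f\mapsto c_f(0,0)$, which kills $\Cusp_k(\rho_M^*)$ and detects the $E_{k,M}$-component. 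Since a holomorphic modular form is determined by its $q$-expansion, the functionals $f\mapsto c_f(j,\gamma)$, $j\ge 0$, separate points of the finite-dimensional space $\mathrm{ACusp}_k(\rho_M^*)$ and hence span its dual; as these all lie in the image of $\Phi$, surjectivity of $\Phi$ is automatic once it is well defined. Thus the whole statement reduces to the assertion that, for a finitely supported tuple $(a_{m,\gamma})_{m\le 0}$,
\[
\begin{gathered}
\sum_{m\le 0,\ \gamma}a_{m,\gamma}\,\rH_{m,\gamma}=0 \ \text{ in }\ \Pic_\CC(\Sh(M)) \\
\Longleftrightarrow\quad
\sum_{m\le 0,\ \gamma}a_{m,\gamma}\,c_f(-m,\gamma)=0 \ \text{ for all }\ f\in\mathrm{ACusp}_k(\rho_M^*),
\end{gathered}
\]
which is precisely well-definedness and injectivity of $\Phi$.

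Both sides cut out $\CC$-subspaces of the space of tuples that are defined over $\QQ$ --- on the left since $\Pic_\QQ(\Sh(M))^{\mathrm{Heegner}}\otimes_\QQ\CC=\Pic_\CC(\Sh(M))^{\mathrm{Heegner}}$, on the right since $\Cusp_k(\rho_M^*)$ and $E_{k,M}$ have bases with rational Fourier coefficients --- so one may assume the $a_{m,\gamma}$ are integers, symmetrized under $\gamma\mapsto-\gamma$. For ``$\Leftarrow$'': restricting the right-hand side to cusp forms is exactly Borcherds' obstruction condition \cite[Thm.~3.1]{Bo98} for the principal part $(a_{m,\gamma})_{m<0}$, so there is a weakly holomorphic form $F$ of weight $1-\frac n2$ and type $\rho_M$ with that principal part; as $1-\frac n2<0$ there is no holomorphic form to add to $F$, so $F$ is unique and its constant term is a fixed linear functional of the principal part (computed by pairing against the Eisenstein series). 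Applying the right-hand side to $f=E_{k,M}$ pins this constant term to the value for which the Borcherds lift \cite[Thm.~13.3]{Bo98} $\Psi_F$ is a meromorphic modular form realizing $\sum_{m<0}a_{m,\gamma}\rH_{m,\gamma}=a_{0,0}\lambda$ in $\Pic_\QQ(\Sh(M))$; since $\rH_{0,0}=-\lambda$ this says $\sum_{m\le 0}a_{m,\gamma}\rH_{m,\gamma}=0$. For ``$\Rightarrow$'': if $\sum_{m\le 0}a_{m,\gamma}\rH_{m,\gamma}=0$ then $\sum_{m<0}a_{m,\gamma}\rH_{m,\gamma}$ is a rational multiple of $\lambda$, so a positive multiple of this divisor is cut out by a meromorphic modular form on $\Sh(M)$; the converse theorem of \cite{Br14} identifies that form, up to a nonzero scalar, with a Borcherds product $\Psi_F$, with $F$ weakly holomorphic of weight $1-\frac n2$, type $\rho_M$, and $c_F(m,\gamma)=a_{m,\gamma}$ for $m<0$. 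Now invoke the \emph{residue lemma}: for any holomorphic $f$ of weight $k$ and type $\rho_M^*$ the scalar series $\sum_\gamma F_\gamma(\tau)f_\gamma(\tau)$ is weakly holomorphic of weight $2$ for $\SL_2(\ZZ)$ (the representations $\rho_M$ and $\rho_M^*$ being contragredient), so its constant term vanishes by the residue theorem on $X(1)$; unwinding this constant term --- and using that $E_{k,M}$ and $\Psi_F$ are normalized so that the weight of $\Psi_F$ matches the coefficient of $\rH_{0,0}$ in the relation --- yields exactly $\sum_{m\le 0}a_{m,\gamma}c_f(-m,\gamma)=0$ for every $f\in\mathrm{ACusp}_k(\rho_M^*)$. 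This is also where it becomes visible why the arithmetic side is $\mathrm{ACusp}_k(\rho_M^*)$ --- cusp forms plus the single series $E_{k,M}$ --- rather than the full $\Mod_k(\rho_M^*)$: a Borcherds-product relation pairs to zero with cusp forms trivially and with $E_{k,M}$ by the above, but generally not with the other Eisenstein series, whose constant terms are supported on nonzero isotropic $\gamma$.

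The step I expect to be the main obstacle is the converse theorem of \cite{Br14} invoked in ``$\Rightarrow$'': that the Heegner divisors satisfy no relations in $\Pic_\QQ(\Sh(M))$ beyond those produced by Borcherds products, equivalently that every meromorphic modular form on $\Sh(M)$ supported on Heegner divisors is, up to a constant, a Borcherds product. This rests on the surjectivity of the regularized Borcherds/Kudla--Millson theta lift and on the Siegel--Weil formula, and it is exactly here that the hypothesis $U\oplus U(N)\subseteq M$ is used: it provides the isotropic room needed for the theta machinery and ensures all the $\rH_{m,\gamma}$ under consideration are honestly defined. A secondary, essentially clerical difficulty is keeping the normalization constants straight --- the precise relation between $\mathrm{div}(\Psi_F)$, the weight of $\Psi_F$, and the Fourier data of $F$, the exact constant term of $E_{k,M}$, and the $\gamma\mapsto-\gamma$ sign conventions, which behave differently for $n$ even and odd. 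Finally, Borcherds' modularity theorem --- that $\sum_{m\le 0,\gamma}\rH_{m,\gamma}q^{-m}\fre_\gamma$ is itself a $\Pic_\CC(\Sh(M))^{\mathrm{Heegner}}$-valued modular form of weight $k$ and type $\rho_M^*$ --- gives an alternative and cleaner route to the upper bound $\dim_\CC\Pic_\CC(\Sh(M))^{\mathrm{Heegner}}\le\dim\Mod_k(\rho_M^*)$, and can be used to streamline parts of the argument.
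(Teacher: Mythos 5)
The paper does not prove this statement—it is quoted directly from \cite{Br14}—and your proposal reconstructs essentially the argument of that source: Borcherds' obstruction criterion and the Borcherds lift give one implication, Bruinier's converse theorem together with the residue (Bruinier--Funke) pairing gives the other, and surjectivity follows because the coefficient functionals separate points of $\mathrm{ACusp}_{\frac{2+n}{2}}(\rho_M^*)$. The only caveats are the bookkeeping issues you already flag (the factor of $2$ between $\mathrm{div}(\Psi_F)=\tfrac12\sum c_F(m,\gamma)\rH_{m,\gamma}$, the weight $c_F(0,0)/2$, and the convention $\rH_{0,0}=-\lambda$; the $\gamma\mapsto-\gamma$ symmetrization; and that the existence criterion for prescribed principal parts is in Borcherds' Gross--Kohnen--Zagier paper rather than Thm.~3.1 of \cite{Bo98}).
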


\begin{remark}
    The assertion also holds for $M$ that does not split a hyperbolic plane. See \cite[Thm. 1.4]{Br14} and \cite[Thm. 1.1]{Stein23}.
\end{remark}

The geometric-modular correspondence established in Theorem \ref{thm:NLC} yields concrete criteria for detecting special divisors. A prime example is the characterization of the Hodge bundle through cusp form annihilation:

\begin{theorem}[Hodge Bundle Criterion]\label{thm-heegner=hodge}
Under the assumptions of Theorem~\ref{thm:NLC}, a finite linear combination $$\cH=\sum\limits_{\gamma\in G_M} \sum\limits_{m\in \QQ} a_{m,\gamma}\rH_{m,\gamma}$$ is proportional to  the Hodge line bundle $\rH_{0,0}=-\lambda_M$ if  and only if  its associated coefficients  functionals \textit{annihilate all cusp forms}, i.e. 
 \begin{equation}\label{eq:vanishing}
     \sum\limits_{\gamma\in G_M} \sum\limits_{m\in \QQ}  a_{m,\gamma} c_{-m,\gamma}=0
 \end{equation}
 for all cusp forms $ \sum\limits_{\gamma\in G_M} \sum\limits_{m\in \QQ}  c_{m,\gamma}q^m\fre_\gamma \in \mathrm{Cusp}_{\frac{2+n}{2}}(\rho_M^*)$. 
\end{theorem}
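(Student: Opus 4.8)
The plan is to derive the criterion directly from the isomorphism $\Phi$ of Theorem~\ref{thm:NLC} by an elementary dimension count on functionals. Write $k=\tfrac{2+n}{2}$. By definition, $\mathrm{ACusp}_k(\rho_M^*)$ is spanned by $\mathrm{Cusp}_k(\rho_M^*)$ together with the Siegel--Eisenstein series $E_{k,M}$. Since the identity coset in the defining sum of $E_{k,M}$ contributes the term $\fre_0$, the coefficient of $q^0\fre_0$ in its Fourier expansion equals $1$; hence $E_{k,M}$ is not a cusp form, and
\[
\mathrm{ACusp}_k(\rho_M^*) = \mathrm{Cusp}_k(\rho_M^*) \oplus \CC\cdot E_{k,M}.
\]
Consequently, the subspace of $\mathrm{ACusp}_k(\rho_M^*)^\vee$ consisting of functionals that annihilate $\mathrm{Cusp}_k(\rho_M^*)$ is exactly one-dimensional.

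Next I would identify the image of the Hodge class under $\Phi$. From the formula \eqref{eq:coe-funct} for the coefficient functionals, $\Phi(\rH_{0,0})=\Phi(-\lambda_M)$ is the functional $L_0$ that sends a modular form to the coefficient of $q^0\fre_0$ in its Fourier expansion. Every cusp form has vanishing constant term, so $L_0$ kills $\mathrm{Cusp}_k(\rho_M^*)$; and $L_0(E_{k,M})=1\neq 0$, so $L_0$ is a nonzero element of the one-dimensional space just described, and therefore spans it.

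Finally, for a finite combination $\cH=\sum_{\gamma}\sum_m a_{m,\gamma}\rH_{m,\gamma}$, the functional $\Phi(\cH)$ sends a cusp form $\sum_{\gamma}\sum_m c_{m,\gamma}q^m\fre_\gamma$ to $\sum_{\gamma}\sum_m a_{m,\gamma}c_{-m,\gamma}$, the term with $(m,\gamma)=(0,0)$ contributing $0$ since the constant term of a cusp form vanishes. Hence the vanishing condition \eqref{eq:vanishing} holding for all cusp forms is precisely the statement that $\Phi(\cH)$ annihilates $\mathrm{Cusp}_k(\rho_M^*)$, i.e. $\Phi(\cH)\in\CC\cdot L_0=\CC\cdot\Phi(-\lambda_M)$. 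Because $\Phi$ is an isomorphism, this is equivalent to $\cH\in\CC\cdot(-\lambda_M)$, that is, $\cH$ is proportional to the Hodge line bundle $\rH_{0,0}$. Both directions of the theorem follow at once.

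I do not expect a genuine obstacle here: all the substance is packaged in Theorem~\ref{thm:NLC}, and the remaining argument is linear algebra. The one point that must be stated carefully is that $E_{k,M}$ is genuinely non-cuspidal, equivalently $\dim\bigl(\mathrm{ACusp}_k(\rho_M^*)/\mathrm{Cusp}_k(\rho_M^*)\bigr)=1$, on which the dimension count rests; this is immediate from the Fourier expansion of the Siegel--Eisenstein series. I would also remark that since $\rH_{0,0}$ and all the $\rH_{m,\gamma}$ are $\QQ$-rational classes and $\Phi$ respects the $\QQ$-structures, the argument gives the statement over $\QQ$ as well as over $\CC$.
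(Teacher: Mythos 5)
Your argument is correct and is essentially the paper's own proof: both reduce the statement to the fact that, under the isomorphism $\Phi$ of Theorem~\ref{thm:NLC}, the functionals annihilating $\mathrm{Cusp}_{\frac{2+n}{2}}(\rho_M^*)$ form the line spanned by the constant-term functional $\Phi(\rH_{0,0})$, using $\mathrm{ACusp}=\mathrm{Cusp}\oplus\CC\,E_{\frac{2+n}{2},M}$. You merely spell out the dimension count and the non-cuspidality of the Eisenstein series, which the paper leaves implicit, so no changes are needed.
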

\begin{proof}
If $\cH$ satisfies \eqref{eq:vanishing}, then the functional
$\Phi(\cH)\in \mathrm{ACusp}_{\frac{2+n}{2}}(\rho_M^*)^\vee$ annihilates all cusp forms, i.e.:
\[\Phi(\cH)(f)=0, \forall f\in \mathrm{Cusp}_{\frac{2+n}{2}}(\rho_M^*). \]
Thus, $\Phi(\cH)$ is proportional to $\Phi(\rH_{0,0})$, the functional that extracts the constant term. Hence, $\cH$ is a multiple of $\rH_{0,0}$. The converse follows directly. 
\end{proof}

\subsection{Extending Heegner Divisors to the Boundary}
We mainly focus on the case $\Sh(M)$. 
Let \( x \in \partial \overline{\Sh}(M) \) be a boundary point. The analytic local Picard group of \( \overline{\Sh}(M) \) at \( x \) is defined as the direct limit
\begin{equation*}
    \Pic_\QQ(\overline{\Sh}(M),x) \coloneqq \lim_{\substack{\longrightarrow \\ x \in U}} \Pic_\QQ(U \cap \Sh(M)),
\end{equation*}
where \( U \) ranges over analytic open neighborhoods of \( x \). The natural restriction map
\begin{equation}
    \Pic_{\QQ}(\Sh(M)) \to \Pic_{\QQ}(\overline{\Sh}(M),x)
\end{equation}
relates global and local Cartier divisor classes.

 A point \( x \in \partial \overline{\Sh}(M) \) is called \textit{generic} if it is not a cusp. Given an isotropic plane \( J \subset M \), we say a divisor \( H \in \Pic_\QQ(\Sh(M)) \) is \textit{trivial at generic points of \( \partial_J \subseteq  \overline{\Sh}(M) \)} if there exists a generic point \( x \in \partial_J  \) where the image of \( H \) under the restriction map vanishes in \( \Pic_\QQ(\overline{\Sh}(M),x) \). 
 
 In \cite{BF01}, Bruinier and Freitag  developed a local analogue of Borcherds products to characterize when a linear combination of Heegner divisors vanishes at generic boundary points. We will restate their criterion in a form adapted to our setting. In the following, we assume the lattice $M$ has  the  \textbf{decomposition} 
     \begin{equation}
         M = U(N_1) \oplus U(N_2) \oplus L
     \end{equation}
     where $L$ is an even definite lattice, \( N_1, N_2 \in \ZZ \setminus \{0\} \). This decomposition induces a decomposition of discriminant groups
\[G_M= G_{U(N_1)\oplus U(N_2)}\oplus G_L.\]
Let \( e_i, f_i \in U(N_i) \) be the standard basis of $U(N_i)$.  Let \( J \subset M \) be the standard isotropic plane generated by \( e_1 \) and \( e_2 \), 
and define the isotropic subgroup \( H_J \subset G_M \) as the span of \( e_1/N_1 \) and \( e_2/N_2 \). Then the composition
\begin{equation}\label{decom-iso}
    G_L\subset H_J^\perp\twoheadrightarrow H_J^\bot/H_J
\end{equation}
is an isomorphism. Under this isomorphism, we let 
\( \pi: H_J^\bot \twoheadrightarrow G_L \) denote the projection map induced by $H_J^\perp\twoheadrightarrow H_J^\bot/H_J$. The following result is a specialization of one direction of \cite[Thm. 5.1]{BF01}:

\begin{theorem}\label{thm-BF01}
    Let \( x \in \partial_J  \) be a generic point. If a finite linear combination 
    \[
        \sum_{\gamma \in G_M} \sum_{m \in \QQ} a_{m,\gamma} \rH_{m,\gamma}
    \]
    has trivial image in \( \Pic_{\QQ}(\overline{\Sh}(M),x) \), then for every isotropic-lifted theta series 
    \[
        \uparrow_{H_J}(\Theta_{L,F}) = \sum_{\gamma \in G_M} \sum_{m \in \QQ} c_{m,\gamma} q^m \mathbf{e}_\gamma,
    \]
    where \( F \) ranges over degree 2 harmonic polynomials, we have
    \[
        \sum_{\gamma \in G_M} \sum_{m \in \QQ} a_{m,\gamma} c_{-m,\gamma} = 0.
    \]
\end{theorem}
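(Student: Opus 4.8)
The statement is a specialization of \cite[Thm.~5.1]{BF01}, and the plan is to recall the local Borcherds product framework there, transport it to the coordinates attached to $J$, and match the obstruction functionals with the coefficient functionals of the isotropic lifts $\uparrow_{H_J}(\Theta_{L,F})$.

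First I would fix the local model at a generic point $x\in\partial_J$. Using the tube domain realization of $\bD$ attached to the isotropic plane $J=\langle e_1,e_2\rangle$ and the Witt decomposition $M=U(N_1)\oplus U(N_2)\oplus L$ with $L$ negative definite of rank $n-2$, a small analytic neighbourhood $U$ of $x$ in $\overline{\Sh}(M)$ meets $\Sh(M)$ in the local model analysed in \cite[\S\S 2--4]{BF01}: a finite quotient of a product $\Delta^{*}\times W\times \Delta$, where $\Delta^{*}$ is a punctured disc in the direction degenerating to the cusp, $W$ is a neighbourhood of the origin in a complex torus built from $L\otimes\CC$, and $\Delta$ parametrizes $\partial_J$ near $x$. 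I would also record that $\uparrow_{H_J}(\Theta_{L,F})$ is a cuspidal modular form of weight $(2+n)/2$ and type $\rho_M^{*}$: by \eqref{decom-iso} the isotropic lift goes from forms for $\rho_L^{*}$ (on $H_J^\perp/H_J$) to forms for $\rho_M^{*}$, it commutes with the Weil representation and preserves cuspidality, and $(2+n)/2 = 2 + \tfrac{1}{2}\rank(L)$, so a harmonic polynomial $F$ of degree $2$ produces exactly this weight by \cite[Thm.~4.1]{Bo98}.

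Next I would invoke \cite[Thm.~5.1]{BF01}. In the present normalization it says that $\sum_{m,\gamma} a_{m,\gamma}\rH_{m,\gamma}$ is trivial at the generic point $x\in\partial_J$ precisely when $\sum_{m,\gamma} a_{m,\gamma}q^{-m}\fre_\gamma$ is the principal part of a local Borcherds product at the cusp $J$, equivalently of a vector-valued input of weight $1-n/2$ and type $\rho_M$ that is weakly holomorphic towards $J$; and the obstruction to the existence of such an input is, through the Fourier--Jacobi expansion along $J$ together with Serre duality, exactly the vanishing of the Fourier-coefficient pairing of $\sum_{m,\gamma} a_{m,\gamma}q^{-m}\fre_\gamma$ against every holomorphic form of weight $(2+n)/2$ and type $\rho_M^{*}$ arising as an isotropic lift along $J$ of a holomorphic form for $\rho_L^{*}$. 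Since the space of such lifts contains all $\uparrow_{H_J}(\Theta_{L,F})$ with $F$ harmonic of degree $2$, triviality of $\sum_{m,\gamma} a_{m,\gamma}\rH_{m,\gamma}$ at $x$ forces $\sum_{\gamma,m} a_{m,\gamma}c_{-m,\gamma}=0$ whenever $\sum_{m,\gamma} c_{m,\gamma}q^{m}\fre_\gamma=\uparrow_{H_J}(\Theta_{L,F})$, which is the claim.

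The main obstacle is bookkeeping: aligning \cite{BF01}'s Weil representation and sign conventions, their normalization of the Heegner divisors and of the local Borcherds lift, and their boundary description with ours, and --- most delicately --- checking that the obstruction space of \cite[Thm.~5.1]{BF01}, once expanded along the one-dimensional component $\partial_J$, really is $\uparrow_{H_J}\!\bigl(\Mod_{(2+n)/2}(\rho_L^{*})\bigr)$, so that it contains every $\uparrow_{H_J}(\Theta_{L,F})$. With that identification in hand, only the easy direction of the local theory is used: local triviality yields a weakly-holomorphic-towards-$J$ input with the prescribed principal part, whence the Serre-duality pairing against the boundary obstruction forms vanishes. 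No surjectivity of theta liftings or input from the basis problem is needed for this theorem; those enter only later, when one shows the resulting obstruction space is maximal.
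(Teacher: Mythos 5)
Your overall route is the paper's route: the theorem is proved by specializing one direction of \cite[Thm.~5.1]{BF01} and then unwinding the definition of $\uparrow_{H_J}$ so that the pairing over $G_L$ becomes a pairing over $G_M$. Two points in your write-up, however, are not mere convention-matching and should be fixed. First, before \cite[Thm.~5.1]{BF01} can be applied you must discard the terms $\rH_{m,\gamma}$ with $\gamma\notin H_J^\perp$: the obstruction statement in \cite{BF01} pairs only coefficients supported on $H_J^\perp$ (through the projection $\pi\colon H_J^\perp\to H_J^\perp/H_J\cong G_L$), so one needs to know that a locally trivial combination remains locally trivial after these terms are removed. This is a genuine geometric input, supplied by \cite[Lem.~4.1]{BF01}: each $\rH_{m,\gamma}$ with $\gamma\notin H_J^\perp$ is already trivial in $\Pic_{\QQ}(\overline{\Sh}(M),x)$ at a generic point of $\partial_J$. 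In your version the reduction is hidden by the fact that the lift $\uparrow_{H_J}(\Theta_{L,F})$ is supported on $H_J^\perp$, but that only hides the extra terms in the final pairing; it does not license applying the cited theorem to the full sum.

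Second, the step you single out as most delicate --- verifying that the local obstruction space is all of $\uparrow_{H_J}\bigl(\Mod_{\frac{2+n}{2}}(\rho_L^*)\bigr)$ --- is both false and unnecessary. The obstruction space produced by \cite[Thm.~5.1]{BF01} at a single boundary component is spanned precisely by the theta series $\Theta_{L,F}$ with $F$ harmonic of degree $2$ (lifted along $H_J$), not by lifts of arbitrary holomorphic forms of type $\rho_L^*$; if it were the latter, the basis-problem machinery of Sections~\ref{sec3}--\ref{sec4} and the whole definition of $\Cusp^{\mathrm{obs}}$ would be superfluous. For the present theorem you only need that the lifted theta series lie in the obstruction pairing coming from \cite{BF01}, which is the literal content of their Theorem~5.1; the surrounding narrative about weight $1-\tfrac{n}{2}$ inputs and Serre duality is scaffolding you do not need and should not be asserted as the statement of their result.
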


\begin{proof}
By \cite[Lem. 4.1]{BF01} and subsequent remarks, the image of \( \rH_{m,\gamma} \) in \( \Pic_{\QQ}(\overline{\Sh}(M),x) \) vanishes when \( \gamma \notin H_J^\perp \). Therefore, the triviality condition reduces to 
    \[
        \sum_{\gamma \in H_J^\perp} \sum_{m \in \QQ} a_{m,\gamma} \rH_{m,\gamma} = 0 \quad \text{in } \Pic_{\QQ}(\overline{\Sh}(M),x).
    \]
    Applying \cite[Thm. 5.1]{BF01} to this reduced sum, we obtain
    \begin{equation*}
        \sum_{\gamma \in H_J^\perp} \sum_{m \in \QQ} a_{m,\gamma} c'_{-m, \pi(\gamma)} = 0
    \end{equation*}
    for all theta series 
    \[
        \Theta_{L,F} = \sum_{\gamma \in G_L} \sum_{m \in \QQ} c'_{m,\gamma} q^m \mathbf{e}_\gamma,
    \]
    where \( F \) runs over degree 2 harmonic polynomials.

    By the definition of isotropic lifting, \( \uparrow_{H_J}(\Theta_{L,F}) \) satisfies \( c_{m,\gamma} = c'_{m, \pi(\gamma)} \) if \( \gamma \in H_J^\perp \), and \( c_{m,\gamma} = 0 \) otherwise. Thus, the above equality extends to
    \[
        \sum_{\gamma \in G_M} \sum_{m \in \QQ} a_{m,\gamma} c_{-m,\gamma} = 0,
    \]
    completing the proof.
\end{proof}

\subsection{An obstruction space}\label{sec-obs}
Based on Theorem~\ref{thm-BF01}, we introduce a subspace of $\Cusp_k(\rho_M^*)$ capturing obstructions to extending Heegner divisor classes to the boundary. 
Let $M$ be an even lattice admitting an orthogonal decomposition
\begin{equation}\label{admissible}
  M = U(N_1) \oplus U(N_2) \oplus L,
\end{equation}
where:
\begin{itemize}
  \item $L$ is a negative definite even lattice;
  \item $N_1, N_2 \in \mathbb{Z}_{\neq 0}$.
\end{itemize}
 We say $J\subset M$ is an \textit{admissible} isotropic plane, if $J$ is the standard isotropic plane in $U(N_1) \oplus U(N_2)\subset M$ for some decomposition of $M$ as \eqref{admissible}. Let $H_J \subseteq G_M$ be the associated isotropic subgroup. 
\begin{definition}
The obstruction space is defined as 
  \[
  \Cusp^{\mathrm{obs}}_{\frac{n+2}{2}}(\rho_{M}^*) \coloneqq \mathrm{Span}\left\{\, \uparrow_{H_J}(\sigma^*\Theta_{J^\perp/J,F}) \ \bigg| \ 
  \begin{aligned}
    & J \subset M \hbox{~admissible}, \\
    & F ~\hbox{harmonic of degree 2}, \\
    & \sigma \in \rO(H_J^\bot/H_J)
  \end{aligned} \right\}.
  \]
\end{definition}

Let $\Pic_\QQ(\overline{\Sh}(M))^{\rm Heegner}\subseteq \Pic_\QQ(\overline{\Sh}(M))$ be the subspace  consisting of elements whose restriction to $\Sh(M)$ is lying in $\Pic_\QQ(\Sh(M))^{\rm Heegner}$. A key observation is 

\begin{proposition}\label{prop:obs=pic1}
Let $M$ be an even lattice as above, and further assume $M$ splits a hyperbolic plane.  If \[\Cusp_{\frac{n+2}{2}}^{\rm obs}(\rho_M^\ast)=\Cusp_{\frac{n+2}{2}}(\rho_M^\ast),\] then 
 $\Pic_\QQ(\overline{\Sh}(M))^{\rm Heegner}\cong \QQ$
 is spanned by the extended Hodge bundle. 
\end{proposition}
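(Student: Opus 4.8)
The plan is to reduce the statement to the Hodge bundle criterion, Theorem~\ref{thm-heegner=hodge}. Take $\overline{H}\in\Pic_\QQ(\overline{\Sh}(M))^{\rm Heegner}$ and let $H$ be its restriction to $\Sh(M)$; by definition of the Heegner subspace, $H=\sum_{m,\gamma}a_{m,\gamma}\rH_{m,\gamma}$ is a finite $\QQ$-linear combination of Heegner divisors. I would show that the coefficient functional $\Phi(H)\in\mathrm{ACusp}_{\frac{n+2}{2}}(\rho_M^*)^\vee$ supplied by Theorem~\ref{thm:NLC} annihilates every cusp form; Theorem~\ref{thm-heegner=hodge} then forces $H=c\,\rH_{0,0}=-c\,\lambda_M$ in $\Pic_\QQ(\Sh(M))$ for some $c\in\QQ$, and a comparison of divisor class groups of $\overline{\Sh}(M)$ and $\Sh(M)$ will upgrade this to $\overline{H}=-c\,\overline{\lambda}_{M}$. (The hypotheses on $M$, in particular that it splits a hyperbolic plane, are exactly what make Theorems~\ref{thm:NLC} and~\ref{thm-heegner=hodge} applicable.)

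For the key vanishing, I would fix an admissible isotropic plane $J\subset M$ together with a decomposition $M=U(N_1)\oplus U(N_2)\oplus L$ as in~\eqref{admissible}, and pick a generic point $x\in\partial_J$. Since $\overline{H}$ is $\QQ$-Cartier, a positive integer multiple of it is a genuine line bundle, hence locally principal on an analytic neighborhood of $x$; therefore the image of $H$ in the local Picard group $\Pic_\QQ(\overline{\Sh}(M),x)$ vanishes. Applying Theorem~\ref{thm-BF01} then yields $\sum_{m,\gamma}a_{m,\gamma}c_{-m,\gamma}=0$ for every isotropic-lifted theta series $\uparrow_{H_J}(\Theta_{L,F})$ with $F$ harmonic of degree $2$; that is, $\Phi(H)$ annihilates $\uparrow_{H_J}(\Theta_{J^\perp/J,F})$ (the $\sigma=\mathrm{id}$ generators of the obstruction space) for every admissible $J$.

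The remaining step is to promote this to all of $\Cusp^{\mathrm{obs}}_{\frac{n+2}{2}}(\rho_M^*)$, i.e. to the twisted generators $\uparrow_{H_J}(\sigma^*\Theta_{J^\perp/J,F})$ for $\sigma\in\rO(H_J^\perp/H_J)$. I expect this to be the crux. One must check that, as $J$ ranges over all admissible isotropic planes — equivalently, as the decomposition~\eqref{admissible} realizing a fixed $J$ is varied — the induced identifications~\eqref{decom-iso} $G_L\cong H_J^\perp/H_J$ sweep out enough of $\rO(H_J^\perp/H_J)$ that the vanishing from the previous paragraph holds against every generator of the obstruction space; this amounts to controlling the $\widetilde{\rO}(M)$-orbits of admissible planes and the way their stabilizers act on $J^\perp/J$. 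Granting this, the hypothesis $\Cusp^{\mathrm{obs}}_{\frac{n+2}{2}}(\rho_M^*)=\Cusp_{\frac{n+2}{2}}(\rho_M^*)$ shows that $\Phi(H)$ annihilates all cusp forms, and Theorem~\ref{thm-heegner=hodge} gives $H=-c\,\lambda_M$.

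To conclude, I would use that $\dim\overline{\Sh}(M)=n\ge 3$ while every boundary stratum $\partial_I$, $\partial_J$ of $\overline{\Sh}(M)$ is a point or a modular curve, hence of codimension $\ge 2$; since $\overline{\Sh}(M)$ is normal, the restriction map $\Cl_\QQ(\overline{\Sh}(M))\to\Cl_\QQ(\Sh(M))=\Pic_\QQ(\Sh(M))$ is injective, and therefore so is $\Pic_\QQ(\overline{\Sh}(M))\hookrightarrow\Cl_\QQ(\overline{\Sh}(M))\to\Pic_\QQ(\Sh(M))$. Because $\overline{H}+c\,\overline{\lambda}_{M}$ restricts to $0$ on $\Sh(M)$, it follows that $\overline{H}=-c\,\overline{\lambda}_{M}$. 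Conversely $\overline{\lambda}_{M}$ lies in $\Pic_\QQ(\overline{\Sh}(M))^{\rm Heegner}$ since its restriction is $\lambda_M=-\rH_{0,0}$, and $\overline{\lambda}_{M}\ne 0$ because it is ample; hence $\Pic_\QQ(\overline{\Sh}(M))^{\rm Heegner}=\QQ\cdot\overline{\lambda}_{M}\cong\QQ$. The main obstacle, as indicated, is recovering the $\sigma$-twisted generators of $\Cusp^{\mathrm{obs}}_{\frac{n+2}{2}}(\rho_M^*)$ from Theorem~\ref{thm-BF01} by ranging over all admissible isotropic planes.
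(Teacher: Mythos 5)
Your overall strategy coincides with the paper's: restrict to $\Sh(M)$, use $\QQ$-Cartierness to get triviality in the local Picard groups at generic points of $\partial_J$ for admissible $J$, invoke Theorem~\ref{thm-BF01} to kill the pairing with $\uparrow_{H_J}(\Theta_{L,F})$, feed the hypothesis $\Cusp^{\mathrm{obs}}=\Cusp$ into Theorem~\ref{thm-heegner=hodge}, and pass back to $\overline{\Sh}(M)$ via the codimension-$\geq 2$ boundary. But the step you yourself flag as the crux --- obtaining the vanishing against the twisted generators $\uparrow_{H_J}(\sigma^*\Theta_{J^\perp/J,F})$ for $\sigma\in\rO(H_J^\perp/H_J)$ --- is left as an assumption ("granting this"), so as written the argument has a genuine gap: without those generators you only pair against a proper subspace of $\Cusp^{\mathrm{obs}}_{\frac{n+2}{2}}(\rho_M^*)$, and the hypothesis of the proposition cannot be applied.

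The paper closes this gap not by analyzing orbits of admissible planes or stabilizer actions, as you suggest, but by a lifting argument: transfer $\sigma$ to $\rO(G_L)$ via \eqref{decom-iso}, extend it by the identity on $G_{U(N_1)\oplus U(N_2)}$ to $\sigma'\in\rO(G_M)$, and use Nikulin's surjectivity of $\rO(M)\to\rO(G_M)$ \cite[Thm.~1.14.2]{Ni79} to lift $\sigma'$ to $\tilde{\sigma}\in\rO(M)$. Then $\tilde{\sigma}(J)$ is again an admissible isotropic plane (coming from the decomposition $\tilde{\sigma}(U(N_1))\oplus\tilde{\sigma}(U(N_2))\oplus\tilde{\sigma}(L)$), its associated isotropic subgroup in $G_M$ is still $H_J$ (since $\sigma'$ fixes $G_{U(N_1)\oplus U(N_2)}$), but the identification of $H_J^\perp/H_J$ with the discriminant form of $\tilde{\sigma}(L)\cong L$ is twisted by $\sigma$; running your second paragraph at a generic point of $\partial_{\tilde{\sigma}(J)}$ therefore produces exactly the vanishing against $\uparrow_{H_J}(\sigma^*\Theta_{L,F})$. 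Note that merely varying the decomposition realizing a fixed $J$, or working inside $\widetilde{\rO}(M)$ (which acts trivially on $G_M$), would not produce these twists; the essential input is Nikulin's lifting theorem, whose hypotheses hold here because $M$ is even indefinite of the stated signature. With that step supplied, the rest of your argument (including the final identification $\Pic_\QQ(\overline{\Sh}(M))^{\rm Heegner}=\QQ\cdot\overline{\lambda}_M$) is correct and matches the paper.
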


\begin{proof}
As $\overline{\Sh}(M)$ is normal, one can view $\Pic_\QQ(\overline{\Sh}(M))^{\rm Heegner}$ as a subspace of $\Pic_\QQ(\Sh(M))^{\rm Heegner}$ via the  following commutative diagram:
\[
\begin{tikzcd}
\mathrm{Pic}_\mathbb{Q}(\overline{\Sh}(M))^{\mathrm{Heegner}} \arrow[r] \arrow[d, hook] & \mathrm{Pic}_\mathbb{Q}(\Sh(M))^{\mathrm{Heegner}} \arrow[d, "="] \\
\mathrm{Cl}_{\mathbb{Q}}(\overline{\Sh}(M))  \arrow[r, "\sim"'] &  \mathrm{Cl}_{\mathbb{Q}}(\Sh(M))
\end{tikzcd}
\]
Here, the second horizontal map is an isomorphism because   the boundary components of $\Sh(M)$ in $\overline{\Sh}(M)$ have codimension $>1$.

For any element $\cH \in \Pic_\QQ(\overline{\Sh}(M))^{\rm Heegner}$, write
$$\cH =\sum\limits_{\gamma\in G_M} \sum\limits_{m\in \QQ}  a_{m,\gamma}\rH_{m,\gamma}.$$ 
As $\cH$ is $\QQ$-Cartier, its image in each local Picard group is a torsion element.  In particular, for an admissible isotropic plane $J$, $\cH$ is trivial at generic points of \( \partial_J \subseteq  \overline{\Sh}(M) \). Hence by Theorem \ref{thm-BF01}, the pairing of $\cH$ and $\uparrow_{H_J}(\Theta_{L,F})$ vanishes, when $F$ runs over degree 2 harmonic polynomials.

Now suppose an admissible isotropic plane $J$ arises from a decomposition \[M = U(N_1) \oplus U(N_2) \oplus L,\]then $G_M$ has a decomposition of the form 
\[G_M= G_{U(N_1)\oplus U(N_2)}\oplus G_L.\] Fix the isomorphism between $G_L$ and $H_J^\perp/H_J$ as \eqref{decom-iso}. For each $\sigma \in \rO(H_J^\bot/H_J)$, it can be transferred to an element in  $\rO(G_L)$ and can be extended diagonally by identity on $G_{U(N_1)\oplus U(N_2)}$ to an element $\sigma'\in\rO(G_{M})$. By \cite[Thm. 1.14.2]{Ni79}, the natural map $$\rO(M)\rightarrow \rO(G_{M})$$ is surjective, so $\sigma'$ can be further lifted to an element $\tilde{\sigma}\in \rO(M)$.  The same argument applied to $\tilde{\sigma}(J)$ imples that the pairing of $\cH$ and $\uparrow_{H_J}(\sigma^*\Theta_{L,F})$ vanishes, when $F$ runs over degree 2 harmonic polynomials.

In summary, the pairing of $\cH$ with any element in $\mathrm{Cusp}_{\frac{n+2}{2}}^{\mathrm{obs}}(\rho_M^\ast)$ vanishes. Now the equality \( \mathrm{Cusp}_{\frac{n+2}{2}}^{\mathrm{obs}}(\rho_M^\ast) = \mathrm{Cusp}_{\frac{n+2}{2}}(\rho_M^\ast) \) combined with Theorem \ref{thm-heegner=hodge} forces \( \cH \) proportional to the Hodge bundle. 
\end{proof}

\section{Picard group of certain Baily--Borel compactifications}\label{sec6}

In this section, we analyze the negative definite lattices arising in the Baily--Borel compactifications. By applying obstruction theory of extending Heegner divisors, we establish  our main theorem. Additionally, we extend the work of \cite{DFV24} to derive a torsion freeness result for orthogonal Shimura varieties of K3 type.

\subsection{Lattices associated to  Baily--Borel compactifications}
For any isotropic plane \( J \subset M \), one can associate a negative definite  lattice \( J^\perp/J \)  of rank \( n-2 \). A natural question arises:  

\vspace{0.5em}  
\noindent  
\textbf{Question.}  
Is every genus representative of \( G_{J^\perp/J} \)-type negative definite lattice  associated to some isotropic planes of \( M \)? More generally, for any isotropic subgroup \( H \subset G_M \), is there an inclusion  
\begin{equation}\label{genus-inc}  
    \Gen_{0,n-2}(H^\perp/H) \subseteq \left\{ J^\perp/J \mid J \subset M \text{ isotropic plane} \right\}/{\cong} \,?  
\end{equation}  
\vspace{0.5em}  
We establish this under hyperbolic splitting conditions.  
\begin{proposition}\label{prop-genus} 
Suppose $M\cong U(N_1)\oplus U(N_2)\oplus L$  for some $N_i\in \ZZ$ and a negative definite lattice $L$. If $\gcd(N_1,N_2)\mid 2$,  we have 
\begin{equation*}
    \mathbf{Gen}_{0,n-2}(G_L)\subseteq \{ J^\perp/J~\mid~J\subseteq M ~\hbox{is an isotropic plane}\}/\cong.
\end{equation*}
In particular, when $N_1=N_2=1$, we have 
 \begin{equation*}
     \mathbf{Gen}_{0,n-2}(G_M)\subseteq \{ J^\perp/J~\mid~J\subseteq M ~\hbox{is an isotropic plane}\}/\cong.
 \end{equation*}
\end{proposition}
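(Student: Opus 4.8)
The plan is to deduce the proposition from a uniqueness statement for lattices in a genus. First, I would fix an arbitrary $K \in \mathbf{Gen}_{0,n-2}(G_L)$ — that is, an even negative definite lattice of rank $n-2$ with $G_K \cong G_L$ — and set $M' \coloneqq U(N_1) \oplus U(N_2) \oplus K$. Then $M'$ is an even lattice of signature $(2,n)$, and since $G_K \cong G_L$ its discriminant form is
\[
G_{M'} \cong G_{U(N_1)} \oplus G_{U(N_2)} \oplus G_K \cong G_{U(N_1)} \oplus G_{U(N_2)} \oplus G_L \cong G_M .
\]
As an even lattice is determined up to genus by its signature and discriminant form (see \cite{Ni79}), $M$ and $M'$ belong to the same genus. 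The whole point will be to upgrade this to an honest isometry $M \cong M'$; granting that, the conclusion is immediate (see the last paragraph).

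To obtain $M \cong M'$, I would apply Nikulin's criterion for a genus to consist of a single class (\cite[Cor.~1.13.3 and Thm.~1.14.2]{Ni79}): an even lattice $T$ of indefinite signature is unique in its genus provided $\rank(T) \geq \rank_p(G_T) + 2$ for every prime $p$. When $N_1 = N_2 = 1$ this holds at once, because then $G_M = G_L$ and $\rank_p(G_M) = \rank_p(G_L) \leq \rank(L) = n-2 = \rank(M) - 4$. For general $N_1, N_2$ with $\gcd(N_1,N_2) \mid 2$ and for any \emph{odd} prime $p$, the hypothesis forces $p$ to divide at most one of $N_1, N_2$; since $G_{U(N)} \cong (\ZZ/N\ZZ)^2$, we get $\rank_p(G_{U(N_1)} \oplus G_{U(N_2)}) \leq 2$, hence $\rank_p(G_M) \leq 2 + \rank_p(G_L) \leq n = \rank(M) - 2$. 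The prime $p = 2$ is the delicate one: if both $N_i$ are even the same estimate only gives $\rank_2(G_M) \leq \rank(M)$, so the naive form of the criterion can fail. Here I expect to argue instead at the level of \emph{spinor} genera, using that the adelic isometry identifying the genera of $M$ and $M'$ may be chosen to be the identity on the common summand $U(N_1)\oplus U(N_2)$; the extra room afforded by these two hyperbolic-type planes should force the relevant local spinor-norm obstruction at $2$ to vanish, so that $M$ and $M'$ lie in a single spinor genus and hence (being indefinite of rank $\geq 3$) in a single isometry class. This $2$-adic analysis is the step I expect to be the main obstacle.

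Assuming $M \cong M'$, I would fix an isometry $\psi\colon M' \xrightarrow{\ \sim\ } M$. Writing $e_i, f_i$ for the standard basis of $U(N_i)$ with $\langle e_i,f_i\rangle = N_i$, let $J_0 \coloneqq \langle e_1, e_2\rangle \subseteq M'$; it is an isotropic plane, and a direct computation gives $J_0^\perp = \ZZ e_1 \oplus \ZZ e_2 \oplus K$ inside $M'$ (the conditions $\langle x,e_i\rangle = 0$ simply annihilate the $f_i$-coordinates, using $N_i \neq 0$), whence $J_0^\perp/J_0 \cong K$. Then $J \coloneqq \psi(J_0) \subseteq M$ is an isotropic plane with $J^\perp/J \cong J_0^\perp/J_0 \cong K$, so $K$ lies in $\{J^\perp/J \mid J \subseteq M \text{ isotropic plane}\}/\cong$. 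Since $K$ was arbitrary, this proves the first inclusion. Finally, when $N_1 = N_2 = 1$ one has $G_{U(N_i)} = 0$, so $G_M = G_L$ and $\mathbf{Gen}_{0,n-2}(G_M) = \mathbf{Gen}_{0,n-2}(G_L)$; as $\gcd(1,1) \mid 2$, the last assertion is the special case $N_1 = N_2 = 1$ of the first.
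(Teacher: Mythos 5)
Your overall strategy is the same as the paper's: pass from $K\in\mathbf{Gen}_{0,n-2}(G_L)$ to $M'=U(N_1)\oplus U(N_2)\oplus K$, note that $M'$ and $M$ share signature and discriminant form and hence genus, invoke uniqueness of the genus to get $M'\cong M$, and transport the standard isotropic plane of $U(N_1)\oplus U(N_2)$; the final computation $J_0^\perp/J_0\cong K$ and the reduction of the second inclusion to $N_1=N_2=1$ are fine. The difference is the uniqueness step, and there your argument has a genuine gap. You only use the crude criterion $\rank\geq\rank_p(G)+2$ for all $p$, which, as you note yourself, can fail at $p=2$ when both $N_i$ are even — and that is exactly the case $\gcd(N_1,N_2)=2$ that the proposition must cover (it is the case $U(2)\oplus U(2)\oplus L$ used later for $2$-elementary lattices). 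Your proposed repair via spinor genera is only a sketch: you neither justify that the adelic isometry can be taken to be the identity on $U(N_1)\oplus U(N_2)$, nor carry out the $2$-adic spinor-norm computation that would show the genus consists of a single spinor genus; you explicitly flag this as "the main obstacle". As written, the proof is therefore incomplete precisely in the nontrivial case of the hypothesis.

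The gap closes at once with the refined form of Nikulin's uniqueness theorem, which is what the paper cites (\cite[Thm.~1.13.1]{Ni79}): at $p=2$ the condition $\rank\geq\rank_2(G)+2$ may be replaced by the alternative that the discriminant form splits off $u(2)$ or $v(2)$. Since $\gcd(N_1,N_2)\mid 2$, the $N_i$ cannot both be divisible by $4$; so if both are even, at least one satisfies $N_i\equiv 2\pmod 4$, whence $U(N_i)\otimes\ZZ_2\cong U(2)\otimes\ZZ_2$ and $q_M$ contains $u(2)$ as an orthogonal summand. Combined with your (correct) estimate at odd primes, this shows $\mathbf{Gen}_{2,n}(G_M)$ is a single class in all cases allowed by the hypothesis, and the rest of your argument then goes through unchanged.
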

\begin{proof}
Let $L'\in \Gen_{0,n-2}(G_L)$ be a negative definite lattice. Then,
$$L'\oplus U(N_1)\oplus U(N_2) \in \Gen_{2,n}(G_M).$$
 
By Nikulin's uniqueness theorem \cite[Thm. 1.13.1]{Ni79}, the genus \( \Gen_{2,n}(G_M) \) contains exactly one isomorphism class. Thus, we have  
   $$ L'\oplus U(N_1)\oplus U(N_2)\cong M.$$ Let $J$ be the image of the standard isotropic plane in $U(N_1)\oplus U(N_2)$ under this isomorphism. Then $J^{\perp}/J \cong L'$, proving the inclusion.    
\end{proof}

\begin{remark}
    From the proof, it is clear that the right-hand side of the inclusion \eqref{genus-inc} may be restricted to range only over \textit{admissible} isotropic planes (see Subsection \ref{sec-obs}).
\end{remark}

\subsection{Picard Group of $\overline{\Sh}_\Gamma(M)$}

We begin by considering lattices that admit sufficient splitting properties. These lattices serve as a natural setting for applying our arithmetic results.
\begin{definition}
    An even lattice \( M \) is \textit{of K3 type} if it satisfies:
    \begin{enumerate}
        \item \( M \) splits two hyperbolic planes over \( \ZZ \);
        \item \( M \otimes \ZZ_p \) splits three hyperbolic planes for all primes \( p \).
    \end{enumerate}
\end{definition}

Then we have 
\begin{theorem}\label{thm:mainthm2}
Let $M$ be an even lattice of signature $(2,n)$  with $n>8$ and $\Gamma\leq \widetilde{\rO}(M)$. Assume that $M$ is K3 type or $p$-elementary for some prime $p$ and splits one hyperbolic plane. Then $$\dim_\QQ \Pic_\QQ(\overline{\Sh}_\Gamma(M))^{\rm Heegner}=1.$$
\end{theorem}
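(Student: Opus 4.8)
The plan is to reduce Theorem~\ref{thm:mainthm2} to the purely arithmetic statement that the obstruction space of Subsection~\ref{sec-obs} is the full space of cusp forms, and then to establish that statement using the basis theorem of Section~\ref{sec4} together with the genus-realization Proposition~\ref{prop-genus}. To begin, $\overline{\lambda}_{M,\Gamma}$ is ample and restricts on $\Sh_\Gamma(M)$ to $-\rH_{0,0}$, so $\dim_\QQ\Pic_\QQ(\overline{\Sh}_\Gamma(M))^{\rm Heegner}\geq 1$ unconditionally. For the reverse bound, let $q\colon\overline{\Sh}_\Gamma(M)\to\overline{\Sh}(M)$ be the finite morphism of Baily--Borel compactifications induced by $\Gamma\leq\widetilde{\rO}(M)$, with restriction $\pi\colon\Sh_\Gamma(M)\to\Sh(M)$. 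Given $\cH\in\Pic_\QQ(\overline{\Sh}_\Gamma(M))^{\rm Heegner}$, its restriction to $\Sh_\Gamma(M)$ lies in $\Pic_\QQ(\Sh_\Gamma(M))^{\rm Heegner}\cong\Pic_\QQ(\Sh(M))^{\rm Heegner}$ and hence equals $\pi^{*}\cH_0$ for a unique $\cH_0\in\Pic_\QQ(\Sh(M))^{\rm Heegner}$. Let $\overline{\cH}_0$ be the Weil-divisor closure of $\cH_0$ in $\overline{\Sh}(M)$; since the boundary of $\overline{\Sh}(M)$ has codimension $n-1>1$ and $\QQ$-Cartierness descends along the finite surjective map $q$ of normal varieties, the fact that $\cH$ is $\QQ$-Cartier forces $\overline{\cH}_0\in\Pic_\QQ(\overline{\Sh}(M))^{\rm Heegner}$. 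As both hypotheses on $M$ imply that $M$ splits a hyperbolic plane, Proposition~\ref{prop:obs=pic1} (whose proof already incorporates Theorems~\ref{thm-BF01} and \ref{thm-heegner=hodge}) then reduces the theorem to the equality $\Cusp^{\mathrm{obs}}_{(n+2)/2}(\rho_M^{*})=\Cusp_{(n+2)/2}(\rho_M^{*})$: granting it, $\overline{\cH}_0\in\QQ\cdot\overline{\lambda}_M$, hence $\cH=q^{*}\overline{\cH}_0\in\QQ\cdot\overline{\lambda}_{M,\Gamma}$, which gives $\dim_\QQ\Pic_\QQ(\overline{\Sh}_\Gamma(M))^{\rm Heegner}=1$.

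For the K3-type case, write $M=U^{\oplus 2}\oplus L$ with $L$ negative definite of rank $r=n-2>6$, so that $G_M=G_L$ and $\rho_M^{*}=\rho_{L(-1)}$. The hypothesis that $M\otimes\ZZ_p$ splits three hyperbolic planes, together with cancellation of the unimodular summand $U^{\oplus 2}$ over $\ZZ_p$, shows that $L(-1)\otimes\ZZ_p$ splits a hyperbolic plane for every prime $p$. Since $(n+2)/2=r/2+2\geq r/2$, Theorem~\ref{thm-cusp=theta} gives $\Cusp_{(n+2)/2}(\rho_M^{*})=\Cusp^{\theta}_{(n+2)/2}(\rho_M^{*})$, which is spanned by the forms $\sigma^{*}\Theta_{L',F}$ with $L'\in\Gen_{0,n-2}(G_M)$, $F$ harmonic of degree $2$, and $\sigma$ an isometry $G_M\cong G_{L'}$. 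By Proposition~\ref{prop-genus} with $N_1=N_2=1$ (so that $H_J=0$ and $\uparrow_{H_J}$ is the identity), each such $L'$ is isomorphic to $J^\perp/J$ for an admissible isotropic plane $J$; hence every generator $\sigma^{*}\Theta_{L',F}$ of $\Cusp^{\theta}$ already lies in $\Cusp^{\mathrm{obs}}_{(n+2)/2}(\rho_M^{*})$, and the two spaces coincide.

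The $p$-elementary case is the main obstacle. One writes $M=U\oplus U(p)\oplus L$ with $L$ negative definite (available since $M$ is $p$-elementary and splits a hyperbolic plane); then $\gcd(N_1,N_2)=\gcd(1,p)\mid 2$, so Proposition~\ref{prop-genus} again realizes every $L'\in\Gen_{0,n-2}(G_L)$ as $J^\perp/J$ for an admissible plane $J$, now with $H_J\cong\ZZ/p$ and $H_J^\perp/H_J\cong G_{J^\perp/J}$, and $\Cusp^{\mathrm{obs}}_{(n+2)/2}(\rho_M^{*})$ contains all lifted series $\uparrow_{H_J}(\sigma^{*}\Theta_{J^\perp/J,F})$. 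Two difficulties arise: $L'(-1)\otimes\ZZ_p$ need not split a hyperbolic plane — for $p$-elementary $L'$ of full $p$-rank it cannot — so Theorem~\ref{thm-cusp=theta} does not directly yield $\Cusp^{\theta}(\rho_{L'})=\Cusp(\rho_{L'})$; and for a fixed $J$ the single lift $\uparrow_{H_J}$ is far from surjective onto $\Cusp_{(n+2)/2}(\rho_M^{*})$. The plan is to use all admissible decompositions at once: Nikulin's classification of $p$-elementary discriminant forms should furnish enough decompositions $M=U(p^{a})\oplus U(p^{b})\oplus L$ that the associated isotropic subgroups $H_J$, the compatibility of the lift/descent maps $\uparrow_H,\downarrow_H$ with the Weil representation, and the Hecke non-vanishing results of Section~\ref{sec4} (Theorem~\ref{thm:nonvanish}, Proposition~\ref{prop:hecke-vanish}) jointly force
\[
  \sum_{J\text{ admissible}}\uparrow_{H_J}\!\bigl(\Cusp^{\theta}_{(n+2)/2}(\rho_{(J^\perp/J)(-1)})\bigr)=\Cusp_{(n+2)/2}(\rho_M^{*}).
\]
Concretely, for a cusp form $f$ pairing to zero against the obstruction space one argues coefficient by coefficient, showing that each $\langle f,\fre_\gamma\rangle$ vanishes by choosing an admissible plane adapted to $\gamma$, the $U(p)$-summand compensating for the failure of the local hyperbolic split at $p$. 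Verifying that the classification supplies sufficiently many admissible planes for every $p$-elementary $M$ of signature $(2,n)$ with $n>8$, and carrying out this bookkeeping, is the crux of the argument.
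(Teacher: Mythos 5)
Your reduction to the equality $\Cusp^{\mathrm{obs}}_{(n+2)/2}(\rho_M^\ast)=\Cusp_{(n+2)/2}(\rho_M^\ast)$ via Proposition~\ref{prop:obs=pic1} is sound (indeed, your descent of $\QQ$-Cartierness along $q$ handles the case of general $\Gamma\leq\widetilde{\rO}(M)$ more explicitly than the paper does), and your K3-type case is exactly the paper's argument: Proposition~\ref{prop-genus} with $N_1=N_2=1$ plus Theorem~\ref{thm-cusp=theta} applied to $L'\in\Gen_{0,n-2}(G_M)$.

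The genuine gap is the $p$-elementary case, which you acknowledge but do not close, and which is the real content of the theorem beyond the K3-type situation. The missing ingredient in the paper's proof is not a bookkeeping of "sufficiently many admissible planes'' detected coefficient-by-coefficient with Hecke operators; it is the decomposition theorem of \cite[Thm.~5.1]{Ma24a}, which writes \emph{every} $f\in\Cusp_k(\rho_M^\ast)$ as a finite sum $\sum_H\uparrow_H(f_H)$ over isotropic subgroups $H\leq G_M$ with $H^\perp/H$ of small $p$-rank (at most $6$, resp.\ $5$). Given that input, the paper chooses the decomposition $M=U(N)\oplus U(p)\oplus L$ with $N\in\{1,p\}$ (resp.\ $\{1,2\}$) precisely so that $L$ \emph{does} split a hyperbolic plane at every prime, applies Theorem~\ref{thm-cusp=theta} to $L$, matches admissible isotropic planes with (for $p=2$, non-characteristic) isotropic subgroups $H\leq G_M$ with $H^\perp/H\cong G_L$ via Nikulin/Witt extension, and then checks combinatorially (using $n>8$) that every $H$ occurring in the Müller decomposition contains such a subgroup $\tilde H$ of order $p$ or $p^2$, so that transitivity of isotropic lifts places each summand inside $\Cusp^{\mathrm{obs}}$. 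Your proposed route has no substitute for this surjectivity statement: the obstruction space is spanned by \emph{lifted} theta series of the quotients $J^\perp/J$, not theta series for $M$, so the Hecke machinery of Section~\ref{sec4} cannot be run directly on $\rho_M$ when $\ell(G_M)$ is close to $n$ (exactly the regime where $M\otimes\ZZ_p$ fails the local splitting needed for Theorem~\ref{thm-cusp=theta}). Two further points would also need repair in your sketch: fixing the decomposition $M=U\oplus U(p)\oplus L$ with $N_1=1$ can force $G_L$ to have full $p$-rank, in which case $\Cusp^{\theta}(\rho_{L}^\ast)\subsetneq\Cusp(\rho_{L}^\ast)$ is possible (Remark~\ref{rem:localConditionNecessary}), so the generators attached to that plane are not known to span even one quotient's cusp space; and for $p=2$ the correspondence between admissible planes and isotropic subgroups must exclude characteristic elements (failure of Witt's theorem in characteristic $2$), an issue absent from your outline.
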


\begin{proof}
Since we assume that $M$ splits a hyperbolic plane, by Proposition \ref{prop:obs=pic1}, it suffices to show $$\Cusp_k^{\mathrm{obs}}(\rho_M^*) = \Cusp_k(\rho_M^*),$$ for $k=\frac{2+n}{2}$. 

\subsection*{Case 1: $M$ is K3 type.}  

By Proposition \ref{prop-genus}, for any lattice $L'\in \Gen_{0,n-2}(G_M)$, there exists an isotropic plane $J\subset M$ such that $J^\perp/J \cong L'$.  Since $\rho_{L'} \cong \rho_{M}$, applying Theorem \ref{thm-cusp=theta} to $L'$ yields
    \[
    \Cusp^\theta_{\frac{2+n}{2}}(\rho_M^*) = \Cusp^{\mathrm{obs}}_{\frac{2+n}{2}}(\rho_M^*) 
    = \Cusp_{\frac{2+n}{2}}(\rho_M^*).
    \]

\subsection*{Case 2: $M$ is $2$-elementary.}  
Let $G_M \cong (\ZZ/2\ZZ)^a$ with $a \leq n$. Due to the classification of $2$-elementary lattices, $M$ is of K3 type when $a \leq n-4$. Thus we assume $a > n-4$, implying $a \in \{n-2, n\}$. In this situation, by \cite[Thm. 1.10.1 \& 1.14.2]{Ni79}, there exists $N \in \{1, 2\}$ such that $M$ decomposes as:
\[
M = U(N) \oplus U(2) \oplus L
\]
where $L$ is negative definite of rank $n-2$ with $G_L \cong (\ZZ/2\ZZ)^{n-4}$. By \cite[Cor. 1.9.3]{Ni79}, $L$ splits a hyperbolic plane locally, satisfying the assumptions of Theorem \ref{thm-cusp=theta}, hence:
\[
\Cusp_k^\theta(\rho_L^*) = \Cusp_k(\rho_L^*).
\]

For any $L' \in \Gen_{0,n-2}(G_L)$, Proposition \ref{prop-genus} yields an admissible isotropic plane $J \subseteq M$ with $J^\perp/J \cong L'$. Moreover, there exists a natural correspondence between:
\begin{itemize}
    \item Admissible isotropic planes $J \subset M$
    \item Non-characteristic isotropic subgroups $H \leq G_M$ (i.e., containing no nonzero characteristic elements) satisfying $H^\perp/H \cong G_L$
\end{itemize}
This consideration for characteristic elements is necessary due to the failure of Witt's extension theorem in characteristic $2$. Specifically, each admissible $J$ determines a non-characteristic isotropic subgroup $H_J \leq G_M$ with $H_J^\perp/H_J \cong G_L$. Conversely, let $H_{J_0}$ denote the isotropic subgroup associated to the standard isotropic plane $$J_0 \subseteq U(N) \oplus U(2).$$ By \cite[Thm. 3.6.3 \& Lem 3.9.1]{Ni79}, any non-characteristic isotropic subgroup $H \leq G_M$ with $H^\perp/H \cong G_L$ is conjugate to $H_{J_0}$ under some $\rO(M)$-transformation (See also Remark \ref{rmk:conjugate}). This proves $H$ is associated with an admissible isotropic plane.

This correspondence implies:
\[
\uparrow_H \bigl( \Cusp_k(\rho_{H^\perp/H}^*) \bigr) = \uparrow_H \bigl( \Cusp^\theta_k(\rho_{H^\perp/H}^*) \bigr) \subseteq \Cusp^{\mathrm{obs}}_k(\rho_M^*)
\]
for all such non-characteristic isotropic subgroups $H \leq G_M$ and it remains to show that the spaces \(\uparrow_H \bigl( \Cusp_k(\rho_{H^\perp/H}^*) \bigr)\) span \(\Cusp_{k}(\rho_M^*)\).

By \cite[Thm. 5.1]{Ma24a}, any \(f\in\Cusp_{k}(\rho_M^*)\) can be written as 
\begin{equation}\label{eq:isotropicallyLiftedForms}
    f = \sum_H\uparrow_H(f_H)
\end{equation}
where the sum ranges over all isotropic subgroups \(H\leq G_M\) such that \(H^\bot/H\cong(\ZZ/2\ZZ)^b\) with
\[ b \leq \begin{cases}
    6 & \text{if $n$ is even} \\
    5 & \text{if $n$ is odd}
\end{cases} \]
and \(H^\bot/H\) has coparity $1$ if $b=6$ with suitable $f_H\in\Cusp_{k}(\rho_{H^\bot/H}^*)$. \\
{\bf Subcase (i): $a = n-2$.} Since $a=n-2>6$, for any $H$ appearing in \eqref{eq:isotropicallyLiftedForms} we have $|H|>1$. If $|H|>2$, then $H$ contains a subgroup $\tilde{H}\leq H$ of order $2$ which is non-characteristic. Now assume $|H|=2$. When $a$ is odd, $H$ must be non-characteristic. When $a$ is even, since $a\geq8$, it follows that $|H^\bot/H| = 2^6$ and therefore, $H^\bot/H$ has coparity $1$. Hence, $H$ is non-characteristic. In summary, we find that any $H$ appearing in \eqref{eq:isotropicallyLiftedForms} contains a subgroup $\tilde{H}\leq H$ with $|\tilde{H}|=2$ which is non-characteristic. Thus, by the transitivity of the isotropic lifts
\[ f = \sum_H\uparrow_{\tilde{H}}(f_{\tilde{H}}) \]
where $f_{\tilde{H}} = \uparrow_{H/\tilde{H}}(f_H)\in\Cusp_k(\rho_{\tilde{H}^\bot/\tilde{H}}^*)$, which proves the claim. \\
{\bf Subcase (ii): $a = n$.} Similarly to before, since $a=n>8$, for any $H$ appearing in \eqref{eq:isotropicallyLiftedForms} we have $|H|>2$. If $|H|>4$, then $H$ contains a subgroup $\tilde{H}\leq H$ of order $4$ which is non-characteristic. If $|H|=4$, it must be non-characteristic by an analogous argument as before. Let for any $H$ appearing in \eqref{eq:isotropicallyLiftedForms} $\tilde{H}\leq H$ be a subgroup with $|\tilde{H}|=4$ which is non-characteristic. Then,
\[ f = \sum_H\uparrow_{\tilde{H}}(f_{\tilde{H}}) \]
where $f_{\tilde{H}} = \uparrow_{H/\tilde{H}}(f_H)\in\Cusp_k(\rho_{\tilde{H}^\bot/\tilde{H}}^*)$.

\subsection*{Case 3: $M$ is $p$-elementary for $p$ an odd prime.}  
The proof follows a similar strategy to Case 2. The advantage is that Witt's theorem holds, while the difficulty lies in the more intricate classification of $p$-elementary lattices.

Note that $n$ must be even and $n \geq 10$. Let $G_M \cong (\ZZ/p\ZZ)^a$ with $a \leq n$. By \cite[Cor. 1.9.3 and 1.13.5]{Ni79}, $M$ is of K3 type if $a \leq n-5$. Thus, we assume $a \geq n-4$. Again, we claim that there exists $N \in \{1, p\}$ such that $M$ decomposes as:
\[
M = U(N) \oplus U(p) \oplus L
\]
where $L$ is negative definite of rank $n-2$ and splits locally a hyperbolic plane. This is essentially \cite[Cor. 1.10.2 \&  Theorem 1.14.2]{Ni79}.  Specifically, 
\begin{itemize}
    \item If $a \leq n-3$, then $N=1$ and $L$ splits locally a hyperbolic plane by \cite[Cor. 1.9.3]{Ni79}.
    \item If $a > n-3$, then $N=p$. Since $M$ splits globally a hyperbolic plane and $\ell(G_{U(N)\oplus U(p)}) = \rank(U(N)\oplus U(p))$, $L$ splits locally a hyperbolic plane.
\end{itemize}
Here we shall remark that a negative definite $p$-elementary lattice  $L$ does not necessarily locally split a hyperbolic plane when  $\ell(G_L)\leq \mathrm{rank}~L-2$.

Theorem \ref{thm-cusp=theta} then implies:
\[
\Cusp_k^\theta(\rho_L^*) = \Cusp_k(\rho_L^*).
\]
As in Case 2,   one can use Witt's extension theorem to obtain a correspondence between admissible isotropic planes $J \subset M$ and  isotropic subgroups $H \leq G_M$ satisfying $H^\perp/H \cong G_L$. Hence, it suffices to show that the spaces
\[
\uparrow_H \bigl( \Cusp_k(\rho_{H^\perp/H}^*) \bigr) \subseteq \Cusp^{\mathrm{obs}}_k(\rho_M^*),
\]
where $H \leq G_M$ ranges over isotropic subgroups with $H \cong \ZZ/N\ZZ \oplus \ZZ/p\ZZ$, span $\Cusp_k(\rho_M^*)$.

As before, any $f \in \Cusp_k(\rho_M^*)$ decomposes as:
\begin{equation}\label{eq:isotropicallyLiftedFormsP}
    f = \sum_{H} \uparrow_H(f_H)
\end{equation}
where the sum is over isotropic subgroups $H \leq G_M$ satisfying $H^\perp/H \cong (\ZZ/p\ZZ)^b$ with
\[
b \leq \begin{cases} 
4 & \text{if $a$ is even} \\
5 & \text{if $a$ is odd}
\end{cases}
\]
and $f_H \in \Cusp_k(\rho_{H^\bot/H}^*)$. \\

{\bf Subcase (i): $a \leq n-3$.} Since $a\geq n-4\geq 6$, it follows that for any $H$ appearing in \eqref{eq:isotropicallyLiftedFormsP} we have $|H|>1$. Therefore, it contains a subgroup $\tilde{H}\leq H$ with $|\tilde{H}| = p$. Then
\[ f = \sum_H\uparrow_{\tilde{H}}(f_{\tilde{H}}) \]
where $f_{\tilde{H}} = \uparrow_{H/\tilde{H}}(f_H)\in\Cusp_k(\rho_{\tilde{H}^\bot/\tilde{H}}^*)$. \\
{\bf Subcase (ii): $a > n-3$.} Since $a> n-3\geq 7$, it follows that for any $H$ appearing in \eqref{eq:isotropicallyLiftedFormsP} we have $|H|>p$. Therefore, it contains a subgroup $\tilde{H}\leq H$ with $|\tilde{H}| = p^2$. Then
\[ f = \sum_H\uparrow_{\tilde{H}}(f_{\tilde{H}}) \]
where $f_{\tilde{H}} = \uparrow_{H/\tilde{H}}(f_H)\in\Cusp_k(\rho_{\tilde{H}^\bot/\tilde{H}}^*)$.

\end{proof}

\begin{remark}\label{rmk:conjugate}
For $2$-elementary lattices $M$, Nikulin's results \cite[Thm. 3.6.3 \& Lemma 3.9.1]{Ni79}  establish that two non-characteristic elements $\gamma,\gamma'\in G_M$ are $\rO(M)$-conjugate if and only if $\frq_M(\gamma)=\frq_M(\gamma')$. Consequently:
\begin{itemize}
    \item Any two non-characteristic isotropic subgroups of order $2$ are $\rO(M)$-conjugate.
\end{itemize}

Now consider $M = U(2) \oplus U(2) \oplus L$ with a non-characteristic isotropic subgroup $H \leq G_M$ of order $4$. Let $\gamma_1, \gamma_2$ generate $H$ and $e_i,f_i$ the standard basis of $U(2)\oplus U(2)$. By Nikulin's result:
\begin{itemize}
    \item We may assume $\gamma_1 = e_1^*$ after $\rO(M)$-action;
    \item $\gamma_2 \in G_{U(2) \oplus L}$;
\end{itemize}
Applying $\rO(U(2) \oplus L)$ then maps $\gamma_2$ to the generator of the second $U(2)$ factor. 
\end{remark}

   \begin{remark}
For a $p$-elementary lattice $M$ of signature $(2, n)$, there is a numerical criterion for $M$ to split a hyperbolic plane:
\begin{itemize}
    \item If $p = 2$, then $M$ splits a hyperbolic plane if and only if $\ell(G_M) \leq n$.
    \item If $p > 2$, then $M$ splits a hyperbolic plane if and only if $\ell(G_M)<n$ or $\ell(G_M)=n$ and $n \equiv 2 \pmod{8}$.
\end{itemize}
This is due to the work of Nikulin \cite{Ni79}.
\end{remark}

\begin{corollary}\label{cor:mainthm2}
Let $M$ be an even lattice of signature $(2,n)$ with $n>8$. We have
$$\dim_\QQ \Pic_\QQ(\overline{\Sh}_\Gamma(M))=1$$
if either $M$ is of K3 type and $\widetilde{\rO}(M)\leq \Gamma$ or $M$ is $p$-elementary and  $\Gamma=\Gamma_0$. 
\end{corollary}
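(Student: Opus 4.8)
The plan is to deduce the full Picard rank from the Heegner-subspace computation already established in Theorem~\ref{thm:mainthm2}, by comparing $\overline{\Sh}_\Gamma(M)$ with $\overline{\Sh}(M)=\overline{\Sh}_{\widetilde{\rO}(M)}(M)$. In both cases of the corollary $\widetilde{\rO}(M)$ is a finite-index subgroup of $\Gamma$: in the K3-type case this is part of the hypothesis, and in the $p$-elementary case $\Gamma=\Gamma_0$ while $\widetilde{\rO}(M)=\ker(\Gamma_0\to\rO(G_M))$ has index dividing $|\rO(G_M)|$; in the $p$-elementary case I also take $M$ to split a hyperbolic plane, as required for Theorem~\ref{thm:mainthm2} to apply to $\widetilde{\rO}(M)$. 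First I would observe that the natural finite quotient map $\Sh(M)\to\Sh_\Gamma(M)$ extends, by functoriality of the Baily--Borel compactification (equivalently, because $\overline{\Sh}_\Gamma(M)$ is the $\Proj$ of the ring of $\Gamma$-invariant modular forms sitting inside the ring of $\widetilde{\rO}(M)$-invariant ones), to a finite surjective morphism $\pi\colon\overline{\Sh}(M)\to\overline{\Sh}_\Gamma(M)$ of normal projective varieties. Pullback along a finite surjective morphism of normal varieties is injective on rational Picard groups (for a quotient by a finite group $G$ one has $\Pic_\QQ(X/G)\hookrightarrow\Pic_\QQ(X)^G$), so $\pi^*\colon\Pic_\QQ(\overline{\Sh}_\Gamma(M))\hookrightarrow\Pic_\QQ(\overline{\Sh}(M))$.

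The second step is to show that $\mathrm{Im}(\pi^*)\subseteq\Pic_\QQ(\overline{\Sh}(M))^{\mathrm{Heegner}}$. As in the proof of Proposition~\ref{prop:obs=pic1}, since $n>8$ the boundary of $\overline{\Sh}(M)$ has codimension $\ge 2$, so restriction embeds $\Pic_\QQ(\overline{\Sh}(M))$ into $\Cl_\QQ(\overline{\Sh}(M))\cong\Cl_\QQ(\Sh(M))\cong\Pic_\QQ(\Sh(M))$. In the K3-type case $M$ splits two hyperbolic planes, hence $\Pic_\QQ(\Sh(M))=\Pic_\QQ(\Sh(M))^{\mathrm{Heegner}}$ by \cite[Thm.~1.11]{BLMM17} (recalled in Section~\ref{sec5}), so that $\Pic_\QQ(\overline{\Sh}(M))=\Pic_\QQ(\overline{\Sh}(M))^{\mathrm{Heegner}}$ and nothing further is needed. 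In the $p$-elementary case, given $\mathcal{L}\in\Pic_\QQ(\overline{\Sh}_{\Gamma_0}(M))$, the restriction $\pi^*(\mathcal{L})|_{\Sh(M)}$ is the pullback under $\Sh(M)\to\Sh_{\Gamma_0}(M)$ of $\mathcal{L}|_{\Sh_{\Gamma_0}(M)}$, which lies in $\Pic_\QQ(\Sh(M))^{\mathrm{Heegner}}$ by Corollary~\ref{cor:Heeg-span}; hence $\pi^*(\mathcal{L})\in\Pic_\QQ(\overline{\Sh}(M))^{\mathrm{Heegner}}$.

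To finish, Theorem~\ref{thm:mainthm2} applied to the subgroup $\widetilde{\rO}(M)$ gives $\dim_\QQ\Pic_\QQ(\overline{\Sh}(M))^{\mathrm{Heegner}}=1$, spanned by a multiple of $\overline{\lambda}_M$. Combining with the two previous steps, $\pi^*$ embeds $\Pic_\QQ(\overline{\Sh}_\Gamma(M))$ into this one-dimensional space, so $\dim_\QQ\Pic_\QQ(\overline{\Sh}_\Gamma(M))\le 1$; equality holds because the extended Hodge bundle $\overline{\lambda}_{M,\Gamma}$ is ample, hence nonzero, and $\pi^*\overline{\lambda}_{M,\Gamma}$ is a nonzero multiple of $\overline{\lambda}_M$. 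I expect the only mildly delicate points to be the two soft inputs of Step~1 — the identification of $\overline{\Sh}_\Gamma(M)$ as a finite quotient of $\overline{\Sh}(M)$ and the injectivity of finite pullback on rational Picard groups — both of which are standard; all the arithmetic substance of the statement is already contained in Theorem~\ref{thm:mainthm2} and Corollary~\ref{cor:Heeg-span}.
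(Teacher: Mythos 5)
Your proposal is correct and follows essentially the same route as the paper: extend the quotient map to a finite morphism $\overline{\Sh}(M)\to\overline{\Sh}_\Gamma(M)$, use injectivity of pullback on $\Pic_\QQ$, show the image lands in $\Pic_\QQ(\overline{\Sh}(M))^{\mathrm{Heegner}}$ (via the two-hyperbolic-plane span in the K3-type case and Corollary~\ref{cor:Heeg-span} in the $p$-elementary case), and conclude from Theorem~\ref{thm:mainthm2}. Your write-up is in fact slightly more explicit than the paper's on the injectivity of $\overline{\pi}^*$ and on the lower bound coming from ampleness of $\overline{\lambda}_{M,\Gamma}$, but the argument is the same.
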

\begin{proof}
  As $\dim M\geq 3$,  the natural projection $\Sh(M)\to \Sh_\Gamma(M)$ preserves the Hodge line bundle and extends to a finite morphism 
    \[\overline{\pi}: \overline{\Sh}(M)\rightarrow \overline{\Sh}_\Gamma(M),\]
between their Baily--Borel compactifications. 
This induces a commutative diagram
   \[
\begin{tikzcd}[column sep=large]
\Pic_{\mathbb{Q}} \left( \overline{\Sh}_{\Gamma}(M) \right) \ar[r, "\overline{\pi}^*"] \ar[d, hookrightarrow] & \Pic_{\mathbb{Q}} \left( \overline{\Sh}(M) \right) \ar[d,hookrightarrow]\\
\Pic_{\mathbb{Q}} \left(\Sh_\Gamma(M) \right) \ar[r, "\pi^\ast "] & \Pic_{\mathbb{Q}} \left(\Sh(M) \right)
\end{tikzcd}
\]

According to Corollary \ref{cor:Heeg-span}, in either case, this map will factor through the Heegner divisor subspaces $\Pic_\QQ(\overline{\Sh}(M))^{\rm Heegner}$.  Thus we can conclude the assertion by Theorem \ref{thm:mainthm2}.
\end{proof}

\subsection*{Proof of Corollary \ref{cor:pic-cy}} Under our hypothesis, the classification result in \cite{AE22} shows that  $\Lambda_\rho$ is a $2$-elementary lattice and  splits a hyperbolic plane. It follows from Corollary \ref{cor:mainthm2} and Remark \ref{rem:Spinorkernel} that $\Pic ((\Gamma_\rho\backslash \bD_\rho)^{\rm BB})$ has Picard number one. 

\subsection{Torsion freeness of the Picard group}\label{tor-free}
We first generalize the results of \cite{DFV24} to Shimura varieties associated with K3-type lattices of large rank.

\begin{definition}
Let $[\Sh(M)]=[\widetilde{\rO}(M)\backslash \bD]$ be the (analytic) quotient stack. It is well-known that $[\Sh(M)] $ is a smooth Deligne--Mumford stack and its coarse moduli space is $\Sh(M)$. 
  
We can define an analytic line bundle on $[\Sh(M)]$ as follows: take the trivial line bundle 
$$\bD \times \mathbb{A}^1$$ 
over $\bD$ and let $\Gamma=\widetilde{\rO}(M)$ act diagonally, where the action on $\mathbb{A}^1$ is given by $$A \cdot \lambda \coloneqq  \det(A)\lambda.$$ The resulting quotient $\cL_M \coloneqq  [\Gamma \backslash \bD \times \mathbb{A}^1]$ is a line bundle over $[\Sh(M)]$.
\end{definition}

Note that $\cL_M$ is not trivial because the determinant of an element in $\Gamma$ is not trivial in general.

For any prime $p$, denote by
$r_p(M)$ the maximal rank of the sublattices $L$ in $M$ such that $\det(L)$ is coprime to $p$. Then we have the following general result.
\begin{theorem}\label{thm:torsion-free}
 Let $M$ be an even lattice of signature $(2,n)$ which splits two hyperbolic lattices.  Assume that $r_2(M)\geq 6$ and $r_3(M)\geq 5$. Then  $\Pic(\Sh(M))$  is torsion free and the torsion part of $\Pic([\Sh(M)])$ is isomorphic to $\ZZ/2\ZZ$, which is generated by $\cL_M$. 
\end{theorem}
\begin{proof}
By \cite[Thm. 1.7]{GHS09}, the abelianisation of \(\widetilde{\mathrm{O}}(M)\) is isomorphic to \(\mathbb{Z}/2\mathbb{Z}\). Following the argument in the proof of \cite[Proposition 2.4]{DFV24}, the torsion part of \(\mathrm{Pic}([\mathrm{Sh}(M)])\) is given by
\[
    \mathrm{Pic}([\mathrm{Sh}(M)])_{\mathrm{torsion}} = \mathbb{Z}\langle \mathcal{L}_M \rangle \cong \mathbb{Z}/2\mathbb{Z}.
\]
This completes the proof of the last assertion.

To show that \(\mathrm{Pic}(\mathrm{Sh}(M))\) is torsion-free, we note that by \cite[Proposition 6.1]{Ols12}, there is an injection
\[
    \mathrm{Pic}(\mathrm{Sh}(M)) \hookrightarrow \mathrm{Pic}([\mathrm{Sh}(M)]).
\]
via the pullback of $[\Sh(M)]\to \Sh(M)$.
Thus, it suffices to prove that \(\mathcal{L}_M\) does not descend to \(\mathrm{Sh}(M)\). Set $M_0=U^{\oplus 2 }$ and write $M=M_0\oplus N$. Then the natural embedding 
$$M_0\hookrightarrow M $$
induces a Cartesian diagram
\begin{equation*}
    \begin{tikzcd}
        \left[\mathrm{Sh}(M_0)\right] \ar[r] \ar[d] & \left[\mathrm{Sh}(M) \right] \ar[d] \\
        \mathrm{Sh}(M_0) \ar[r] & \mathrm{Sh}(M).
    \end{tikzcd}
\end{equation*}
The pullback of $\cL_{M}$ to $[\Sh(M_0)]$ is $\cL_{M_0}$. If $\cL_{M}$ descends to $\Sh(M)$, then $\cL_{M_0}$ also descends to $\Sh(M_0)$. However,   this contradicts the fact $\Sh(M_0)\cong \AA^2$ and only the trivial line bundle descends to $\Sh(M_0)$.

\end{proof}

As an immediate consequence, we obtain
\begin{corollary}\label{cor:mainthm3}
Let $M$ be a lattice of K3 type and  $\rank(M)>10$. Then $\Pic(\overline{\Sh}(M))\cong \ZZ$ is spanned by some multiple of $\overline{\lambda}_M$. 
\end{corollary}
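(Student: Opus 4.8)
The idea is to upgrade the rational statement to an integral one by combining the rank-one computation already in hand with torsion-freeness of the Picard group of the open part. Since $\rank(M)>10$, the signature of $M$ is $(2,n)$ with $n=\rank(M)-2\geq 9>8$, so Corollary \ref{cor:mainthm2} (applied with $\Gamma=\widetilde{\rO}(M)$) gives $\dim_\QQ\Pic_\QQ(\overline{\Sh}(M))=1$, spanned by $\overline{\lambda}_M$; in particular $\overline{\lambda}_M$, being ample, is a non-torsion class in $\Pic(\overline{\Sh}(M))$.

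Next I would verify that $M$ satisfies the hypotheses of Theorem \ref{thm:torsion-free}. Condition (1) in the definition of K3 type is precisely that $M$ splits two hyperbolic planes. For the local invariants $r_p(M)$, condition (2) says that $M\otimes\ZZ_p$ contains $U^{\oplus 3}$, a $\ZZ_p$-unimodular sublattice of rank $6$, for every prime $p$. Approximating a $\ZZ_p$-basis of this sublattice by vectors of $M$ (using that $M$ is $p$-adically dense in $M\otimes\ZZ_p$) produces a rank-$6$ sublattice $L\subseteq M$ whose Gram determinant is congruent modulo $p$ to a $\ZZ_p$-unit, hence is coprime to $p$; alternatively, writing $M=U^{\oplus 2}\oplus K$ and cancelling $U$ over $\ZZ_p$ gives $K\otimes\ZZ_p\supseteq U$, with the same effect. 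Either way $r_p(M)\geq 6$ for all primes $p$, so in particular $r_2(M)\geq 6$ and $r_3(M)\geq 5$, and Theorem \ref{thm:torsion-free} applies: $\Pic(\Sh(M))$ is torsion-free.

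Finally I would show that restriction to the open stratum is injective and conclude. The boundary strata $\partial_I$ and $\partial_J$ of $\overline{\Sh}(M)$ have dimension $0$ and $\leq 1$ respectively, while $\dim\Sh(M)=n\geq 9$, so $\overline{\Sh}(M)\setminus\Sh(M)$ has codimension $\geq 2$ and the restriction map on Weil divisor class groups $\Cl(\overline{\Sh}(M))\to\Cl(\Sh(M))$ is an isomorphism. Since $\overline{\Sh}(M)$ is normal, $\Pic(\overline{\Sh}(M))\hookrightarrow\Cl(\overline{\Sh}(M))$, so any line bundle on $\overline{\Sh}(M)$ trivial on $\Sh(M)$ has trivial Weil class and is therefore trivial; thus restriction embeds $\Pic(\overline{\Sh}(M))$ into $\Pic(\Sh(M))$. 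Combining this embedding into a torsion-free group with the rational rank-one statement of Step 1 yields $\Pic(\overline{\Sh}(M))\cong\ZZ$, with $\overline{\lambda}_M$ a positive multiple of the ample generator, which is the assertion.

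Since the substantive input — Corollary \ref{cor:mainthm2} and Theorem \ref{thm:torsion-free} — is already established, the argument above is essentially formal. The only point demanding a little care is the passage from the local splitting hypothesis in the definition of K3 type to the global inequalities $r_2(M)\geq 6$ and $r_3(M)\geq 5$; this is the step I would write out in detail, but it is a routine $p$-adic approximation (or Witt cancellation) argument rather than a genuine obstacle.
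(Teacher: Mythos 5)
Your argument is correct and follows essentially the same route as the paper: deduce $r_2(M)\geq 6$ and $r_3(M)\geq 5$ from the K3-type hypothesis, apply Theorem \ref{thm:torsion-free} to get torsion-freeness of $\Pic(\Sh(M))$, transfer it to $\Pic(\overline{\Sh}(M))$ via the codimension $\geq 2$ boundary, and combine with the rank-one rational statement of Corollary \ref{cor:mainthm2}. The only difference is that you spell out the $p$-adic approximation (or Witt cancellation) step establishing $r_p(M)\geq 6$, which the paper asserts without proof; this is a welcome detail but not a change of method.
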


\begin{proof}
Since \( M \) is of K3 type, it satisfies the conditions \(r_2(M) \geq 6\) and \(r_3(M) \geq 5\). By Theorem \ref{thm:torsion-free}, \(\Pic(\Sh(M))\) is torsion-free. As \(\overline{\Sh}(M) \setminus \Sh(M)\) has codimension \(> 2\), \(\Pic(\overline{\Sh}(M))\) is also torsion-free. This implies \(\Pic(\overline{\Sh}(M)) \cong \mathbb{Z}\).
\end{proof}

\section*{Appendix}
\appendix
\section{A vector-valued Siegel--Weil formula}\label{appA}
Following \cite{Ma24} and \cite{MpSW}, we introduce the Siegel--Weil formula for metaplectic groups. 
For simplicity, let us assume $M$ is a positive definite even lattice  and $V=M\otimes \QQ$.  Let $\Sp_{2d}$ be the standard symplectic $\ZZ$-group scheme and $\Mp_{2d}(\AA)\stackrel{\pi}{\rightarrow}\Sp_{2d}(\AA)$ be the metaplectic double cover. Then the inclusion $\Sp_{2d}(\QQ)\hookrightarrow \Sp_{2d}(\AA)$ can be uniquely extended to an inclusion $$\Sp_{2d}(\QQ)\hookrightarrow \Mp_{2d}(\AA).$$ 
Through this lifting, we will consider $\Sp_{2d}(\QQ)$ as a subgroup of $\Mp_{2d}(\AA)$.

Fix the standard additive character $\psi:\QQ\backslash\AA \rightarrow \CC^{\times}$ whose archimedean component is given by $\psi_{\infty}:\RR \rightarrow \CC^{\times}$, $x_{\infty}\mapsto \bfe(x_{\infty})$ and the $p$-adic component is given by $\psi_p:\QQ_p \rightarrow \CC^{\times}$, $x_p\mapsto \bfe(-x_p')$, where $x_p'\in \QQ/\ZZ$ is the principal part of $x_p$. Let $\omega_\psi$  be the (automorphic) Weil representation of $\rO(V)(\AA)\times \Mp_{2d}(\AA)$ realized in the Schr$\ddot{\hbox{o}}$dinger model $\cS(V(\AA)^d)$,  where $\cS(V(\AA)^d)$ is the space of Schwartz--Bruhat functions on $V(\AA)^d$.  Similarly, one can define $\omega_{\psi,f}$ as the Weil representation acting on $\cS(V(\AA_f)^d)$. 

\begin{remark}\label{ref:weil-rep comparison}
    There is a natural relation between this automorphic Weil representation and the representation $\rho_M^{(d)}$ defined in Subsection \ref{subsec:weil-rep}. Each element $\gamma\in G_M^{(d)}$ corresponds to a Schwartz function $$\varphi_{\gamma}=\otimes_{p<\infty}\varphi_p \in \cS(V(\AA_f)^d),$$ where $\varphi_p \in \cS(V(\QQ_p)^d)$ is the characteristic function of $\gamma+(M\otimes \ZZ_p)^d$. Under the map
\begin{align*}
    \iota:\CC[G_M^{(d)}] & \rightarrow \cS(V(\AA_f)^d)\\
    \fre_{\gamma} &\mapsto \varphi_{\gamma}, 
\end{align*}
we have $\omega_{\psi,f}(g_f)\circ \iota=\iota\circ \overline{\rho_{M}^{(d)}}(g)$ (cf.~\cite{Zhang2009ModularityOG}) for $g\in\Mp_{2d}(\ZZ)$ and $g_f\in\Mp_{2d}(\hat{\ZZ})$ the unique element such that $gg_f\in\Sp_{2d}(\QQ)\subset\Mp_{2d}(\AA)$.

\end{remark}

\begin{definition}
   For $\varphi\in\cS(V(\AA)^d)$, we can define the theta series 
    \begin{equation*}
        \theta(g,h,\varphi)\coloneqq \sum_{\bfx\in V(\QQ)^d}\omega_{\psi}(g)\varphi(h^{-1}\bfx), \quad g \in \Mp_{2d}(\AA), \quad h \in \rO(V)(\AA). 
    \end{equation*}
   which is automorphic on both $\Mp_{2d}(\AA)$ and $\rO(V)(\AA)$.

\end{definition}

Let $P=NA\subset \Sp_{2d}$ be the standard Siegel parabolic subgroup. Here for any commutative ring $R$,
$$A(R)\coloneqq \left\{m(U)\coloneqq  \left(\begin{array}{cc}U & 0 \\0 & (U^{-1})^\top\end{array}\right) | U \in \GL_d(R) \right\},$$
and
$$N(R)\coloneqq \left\{n(B)\coloneqq  \left(\begin{array}{cc}I_d & B \\0 & I_d \end{array}\right) | B \in \sym_d(R) \right\}.$$
We have the global Iwasawa decomposition $\Sp_{2d}(\AA)=N(\AA)A(\AA)K$ and $\Mp_{2d}(\AA)=N(\AA)\widetilde{A}(\AA)\widetilde{K}$ for the standard maximal open compact subgroup. 

\begin{definition}
   With the notations as above,  the Eisenstein series associated with $\varphi\in \cS(V^d(\AA))$ is defined by 
 \begin{equation*}
     E(g,s,\varphi)\coloneqq \sum\limits_{\gamma\in P(\QQ)\backslash \Sp_{2d}(\QQ)} \Phi(\gamma g,s,\varphi) \quad g \in \Mp_{2d}(\AA), \quad s\in\CC,
 \end{equation*}
 where $\Phi(g,s,\varphi)=|\det a(g)|^{s-s_0}(\omega_{\psi}(g)\varphi)(0)$ with $s_0=\frac{r}{2}-\frac{d+1}{2}$ and $\pi(g)=n\cdot m(a(g))\cdot k\in\Sp_{2d}(\AA)$ under the Iwasawa decomposition.   It converges absolutely for $\mathrm{Re}(s)>\frac{d+1}{2}$.
\end{definition}

The famous Siegel--Weil formula identifies the theta lifting of the constant function $\mathbf{1}$ and the special value of the Eisenstein series.

\begin{theorem}[Siegel--Weil formula in the metaplectic case (cf. \cite{MpSW})]\label{thm:MpSW}
       Let $\varphi\in \cS(V(\AA)^d)$ be a $\widetilde{K}$-finite function. Suppose $r>d+3$, then the Eisenstein series $E(g,s,\varphi)$ is holomorphic at $s=s_0$ and
        \begin{equation*}
        E(g,s_0,\varphi)=\int_{\mathrm{O}(V)(\QQ)\backslash \mathrm{O}(V)(\AA)}\theta(g,h,\varphi)\rmd h.
        \end{equation*}
        Here the Haar measure $\rmd h$ is normalized so that $\vol(\mathrm{O}(V)(\QQ)\backslash \mathrm{O}(V)(\AA))=1.$
    \end{theorem}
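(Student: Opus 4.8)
The plan is to prove this along the classical lines of Weil's argument, adapted to the metaplectic dual pair $(\mathrm{O}(V),\Mp_{2d})$ as carried out in \cite{MpSW}; the point worth emphasising is that the positive-definiteness of $M$ puts us in the simplest possible situation. Since $M$ is positive definite, $V=M\otimes\QQ$ is an \emph{anisotropic} quadratic space over $\QQ$, so $\mathrm{O}(V)$ is $\QQ$-anisotropic and $[\mathrm{O}(V)]\coloneqq\mathrm{O}(V)(\QQ)\backslash\mathrm{O}(V)(\AA)$ is compact. Hence, for $\widetilde K$-finite $\varphi$, the theta kernel $\theta(g,h,\varphi)$ — which is continuous, left $\Sp_{2d}(\QQ)$-invariant in $g$ and left $\mathrm{O}(V)(\QQ)$-invariant in $h$ — integrates absolutely over the compact set $[\mathrm{O}(V)]$, and $I(g,\varphi)\coloneqq\int_{[\mathrm{O}(V)]}\theta(g,h,\varphi)\,\rmd h$ is a slowly increasing automorphic form on $\Mp_{2d}(\AA)$. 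On the Eisenstein side, Godement's criterion gives absolute convergence of $E(g,s,\varphi)$ for $\mathrm{Re}(s)>\tfrac{d+1}{2}$, and the theory of Eisenstein series (Langlands, in the covering-group form) gives its meromorphic continuation; the hypothesis $r>d+3$ is there to guarantee that $s_0=\tfrac r2-\tfrac{d+1}{2}$ is a point of holomorphy — in the range $r>2d+2$ (the only case needed in this paper, where $d\le 2$) this is immediate since then $s_0>\tfrac{d+1}{2}$, and in general it is part of the analysis of the poles of metaplectic Siegel--Eisenstein series in \cite{MpSW}.

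Given the two automorphic forms $E(\cdot,s_0,\varphi)$ and $I(\cdot,\varphi)$ on $\Mp_{2d}(\AA)$, I would prove they coincide by comparing their Fourier expansions along the Siegel parabolic $P=NA$, i.e. the coefficients indexed by $T\in\mathrm{Sym}_d(\QQ)$ obtained by integrating against $\psi(\mathrm{tr}(TB))$ over $N(\QQ)\backslash N(\AA)$. For \emph{non-degenerate} $T$: unfolding the sum over $V(\QQ)^d$ in $\theta$ into $\mathrm{O}(V)(\QQ)$-orbits and using Witt's theorem (each completion acts transitively on the vectors with fixed non-degenerate moment matrix $T$) presents the $T$-th coefficient of $I$ as an Euler product of local representation densities $\prod_v\alpha_v(T,V)$, while the $T$-th coefficient of $E$ is the matching Euler product of local degenerate Whittaker integrals $\prod_v W_{T,v}(g_v,s_0,\varphi_v)$. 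Their equality then reduces to the \emph{local} Siegel--Weil identity $W_{T,v}(s_0,\varphi_v)=\gamma_v\,\alpha_v(T,V)$ together with the product formula $\prod_v\gamma_v=1$; in the metaplectic (odd $r$) case the constants $\gamma_v$ are built from the Weil indices $\gamma_v(\psi_v)$, and their global triviality is the Weil-index product formula.

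For \emph{degenerate} $T$ — including $T=0$, which encodes the constant terms along $P$ — I would argue by induction on $d$. If $T$ has rank $j<d$, then (using anisotropy again: every subspace of $V$ is non-degenerate) the vectors $x$ with moment matrix $T$ span a non-degenerate $j$-dimensional $W\subseteq V$, and both the $T$-th Fourier coefficient of $I$ and that of $E$ factor through the rank-$j$ Siegel--Weil datum attached to $W$ and a lower-degree Eisenstein series; in the convergent range this is exactly the assertion of the theorem in degree $j<d$, which holds by the inductive hypothesis, the base case $d=0$ being trivial. Matching all Fourier coefficients gives $E(g,s_0,\varphi)=I(g,\varphi)$, with the overall constant pinned down to $1$ by the normalisation $\mathrm{vol}([\mathrm{O}(V)])=1$.

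The hard part is the metaplectic local Siegel--Weil comparison and the attendant bookkeeping of Weil indices — in particular the computations at $p=2$ and at the archimedean place, and the verification that the product of the local normalising constants is trivial — which is precisely the technical content of \cite{MpSW} (extending Kudla--Rallis, and the work of Sweet and Ikeda in the symplectic case). For this reason I would invoke \cite{MpSW} as a black box rather than reprove it; the genuinely new work for us is the reduction from this adelic identity to the vector-valued statement of Theorem \ref{thm:Eis=Theta}, carried out separately in Appendix \ref{appA}.
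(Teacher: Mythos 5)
The paper gives no proof of this statement at all—it is quoted directly from \cite{MpSW}—which is in effect what your proposal does, since you ultimately invoke \cite{MpSW} as a black box both for the local Siegel--Weil comparisons and for the holomorphy of $E(g,s,\varphi)$ at $s_0$ outside the convergent range $r>2d+2$. Your surrounding sketch (compactness of $\mathrm{O}(V)(\QQ)\backslash\mathrm{O}(V)(\AA)$ from anisotropy of the positive definite $V$, comparison of nondegenerate Fourier coefficients via local densities and Whittaker integrals with the Weil-index product formula, and induction on $d$ for degenerate coefficients) is a fair outline of how such results are proved and is consistent with how the theorem is used in Appendix \ref{appA}.
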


As an application, we can deduce Theorem \ref{thm:Eis=Theta}.

\begin{theorem}\label{thm:vecSW}
    Suppose $M$ is a positive definite even lattice of rank $r$ with $\frac{r}{2}>d+1$. Then \[\Theta^{(d)}_{\mathbf{Gen}(M)}=\bE_{\frac{r}{2},M}^{(d)}.\]
\end{theorem}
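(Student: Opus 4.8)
The plan is to deduce the identity from the adelic Siegel--Weil formula of Theorem~\ref{thm:MpSW} by unwinding both sides into the classical vector-valued objects of Section~\ref{sec2}. Note first that the hypothesis $\frac r2>d+1$ is exactly what places $s_0=\frac r2-\frac{d+1}{2}$ strictly inside the half-plane $\mathrm{Re}(s)>\frac{d+1}{2}$ of absolute convergence of $E(g,s,\varphi)$, and it also forces $r>d+3$ (for $d\geq 1$ one has $r>2d+2\geq d+3$), so Theorem~\ref{thm:MpSW} applies and no analytic continuation is needed; on the classical side $\bE^{(d)}_{\frac r2,M}$ likewise converges normally since $k=\frac r2>d+1$.

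Next I would fix the Schwartz--Bruhat function $\varphi=\varphi_\infty\otimes\varphi_f\in\cS(V(\AA)^d)$ with $\varphi_f=\iota(\fre_0)$, the characteristic function of $(M\otimes\widehat{\ZZ})^d$, and $\varphi_\infty$ the standard Gaussian attached to the positive definite form on $V(\RR)$. The point of this choice is twofold. At the finite places, Remark~\ref{ref:weil-rep comparison} shows that $\varphi_f$ transforms under $\Mp_{2d}(\widehat{\ZZ})$ through $\overline{\rho_M^{(d)}}$, so every object in sight descends to a function on $\HH_d$ valued in $\CC[G_M^{(d)}]$ of type $\rho_M^{(d)}$. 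At the archimedean place, the Weil representation acting on $\varphi_\infty$ transports the element $\widetilde{g}_\infty\in\Mp_{2d}(\RR)$ lying over $\tau\in\HH_d$ to the factor $\phi_{g_\infty}(\tau)^{-r}\,\bfe\big(\tfrac12\tr((\bfx,\bfx)\tau)\big)$, so that automorphic forms of weight $\frac r2$ and type $\rho_M^{(d)}$ become holomorphic vector-valued Siegel modular forms in the sense of Section~\ref{sec2}.

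I would then identify the theta-integral side of Theorem~\ref{thm:MpSW} with $\Theta^{(d)}_{\mathbf{Gen}(M)}$. Decompose $\rO(V)(\AA_f)=\bigsqcup_j\rO(V)(\QQ)\,h_j\,K_M$ with $K_M$ the stabiliser of $M\otimes\widehat{\ZZ}$; the double cosets $[h_j]$ are in bijection with the isometry classes $L_j\in\mathbf{Gen}(M)$, each equipped with an induced identification $\sigma_j\in\mathrm{Iso}(G_M,G_{L_j})$. Unfolding $\theta(g,h_j,\varphi)$ and evaluating at $\tau$ produces $\sigma_j^{*}\Theta^{(d)}_{L_j,1}(\tau)$, and the normalisation $\vol\big(\rO(V)(\QQ)\backslash\rO(V)(\AA)\big)=1$ converts the adelic integral into the mass-weighted average \eqref{eq:gen-theta}: the local orbit-volumes of $h_jK_M$ reproduce the factors $|\mathrm{Aut}(L_j)|^{-1}$ together with the overall normalising constant $|\mathrm{Aut}(G_M)|^{-1}\big(\sum_L|\mathrm{Aut}(L)|^{-1}\big)^{-1}$. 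This is the standard adelic repackaging of the Siegel mass formula, and $\frac r2>d+1$ guarantees termwise convergence of the genus theta series.

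Finally I would identify the Eisenstein-series side. Using the Iwasawa decomposition $\Sp_{2d}(\AA)=N(\AA)A(\AA)K$ and strong approximation for $\Sp_{2d}$, the sum over $P(\QQ)\backslash\Sp_{2d}(\QQ)$ defining $E(g,s_0,\varphi)$ collapses, using that the adelic integrand is determined by its values on $\Sp_{2d}(\QQ)$ together with the Weil action of $\Mp_{2d}(\widehat{\ZZ})$ on $\varphi_f=\iota(\fre_0)$, to a sum over $\widetilde{\Gamma}_\infty^{(d)}\backslash\Mp_{2d}(\ZZ)$; the section $\Phi(\widetilde{g},s_0,\varphi)$ contributes $\phi_g(\tau)^{-r}$ from the archimedean place and $\rho_M^{(d)}(\widetilde{g})^{-1}(\fre_0)$ from the finite places, so that $E(g,s_0,\varphi)$ at $\tau$ equals $\sum_{\widetilde{g}}\fre_0\mid_{\frac{r}{2},G^{(d)}_M}[\widetilde{g}]=\bE^{(d)}_{\frac r2,M}(\tau)$. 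Combining the two identifications with Theorem~\ref{thm:MpSW} yields $\Theta^{(d)}_{\mathbf{Gen}(M)}=\bE^{(d)}_{\frac r2,M}$. The main obstacle will not be conceptual but a matter of calibration: one must track every normalising constant so that the mass-formula weights on the theta side and the metaplectic cocycle factors $\phi_g(\tau)$ on the Eisenstein side match exactly, and confirm that with $\varphi_f=\iota(\fre_0)$ the adelic Eisenstein series descends, with no spurious factor, to the classical $\fre_0$-Eisenstein series of type $\rho_M^{(d)}$; once this bookkeeping is pinned down, following \cite{Ma24} and \cite{MpSW}, the theorem is immediate.
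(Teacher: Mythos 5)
Your overall route is the same as the paper's: specialize the adelic metaplectic Siegel--Weil formula (Theorem~\ref{thm:MpSW}) at $s_0$, unfold the theta integral over the double cosets of $\rO(V)(\AA_f)$ modulo the stabilizer of $M\otimes\widehat{\ZZ}$ to produce the mass-weighted genus average, identify the Eisenstein section classically, and check $r>d+3$ exactly as you do. However, there is a genuine gap in your descent step. You fix the single finite Schwartz function $\varphi_f=\iota(\fre_0)$ and then assert that both sides of the adelic identity ``descend'' to the full $\CC[G_M^{(d)}]$-valued objects $\Theta^{(d)}_{\mathbf{Gen}(M)}$ and $\bE^{(d)}_{\frac r2,M}$. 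But with this one choice the adelic Siegel--Weil identity is an identity of \emph{scalar} automorphic forms: evaluating at $\widetilde{g}_\tau$ compares only the $\fre_0$-components of the two classical forms, and right-translating by $\Mp_{2d}(\widehat{\ZZ})$ (via Remark~\ref{ref:weil-rep comparison}) only adds the components lying in the $\Mp_{2d}(\ZZ)$-submodule of $\CC[G_M^{(d)}]$ generated by $\fre_0$ under the Weil representation. This submodule is proper in general (for instance it is contained in the $\rO(G_M)$-invariant subspace, and typically in an even smaller subspace), whereas the theta series $\Theta^{(d)}_{L,1}$ has a priori nonvanishing components at every class $\gamma\in G_M^{(d)}$; indeed your own unfolding claim that $\theta(\widetilde{g}_\tau,h_j,\varphi)$ ``produces $\sigma_j^{*}\Theta^{(d)}_{L_j,1}(\tau)$'' has a type mismatch, since with $\varphi_f=\iota(\fre_0)$ the theta kernel at a point is a number, namely the $0$-component $\sum_{\bfv\in L_j^d}\bfe\bigl(\tfrac12\tr((\bfv,\bfv)\tau)\bigr)$, not the vector-valued series. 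So as written you only prove that the projections of the two sides onto the $\fre_0$-submodule agree, which does not exclude extra components of $\Theta^{(d)}_{\mathbf{Gen}(M)}$ outside it.

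The repair is exactly what the paper does: since both $E(g,s,\varphi)$ and the theta integral are linear in $\varphi$, and each $\varphi_{\infty,\gamma}=\varphi_\infty\otimes\iota(\fre_\gamma)$ is $\widetilde{K}$-finite, apply Theorem~\ref{thm:MpSW} to $\varphi_{\infty,\gamma}$ for \emph{every} $\gamma\in G_M^{(d)}$ and compare component functions: up to the common factor $\chi_{V,\infty}^{\psi}(\tilde m)^{-1}|\det(m(a))|^{-r/2}$, the theta integral with $\varphi_{\infty,\gamma}$ is $\langle\Theta^{(d)}_{\mathbf{Gen}(M)}(\tau),\fre_\gamma\rangle$ and $E(\widetilde{g}_\tau,s_0,\varphi_{\infty,\gamma})$ is $\langle\bE^{(d)}_{\frac r2,M}(\tau),\fre_\gamma\rangle$, the latter via the computation of $\omega_{\psi,\infty}(\tilde g)$ on the Gaussian (equivalently on $q_\tau$), which yields the automorphy factor $\phi_g(\tau)^{-r}$, while the finite places contribute $\omega_{\psi,f}(\tilde g_f)(\varphi_\gamma)(0)$, i.e.\ the $\rho_M^{(d)}$-part of the classical Eisenstein summand. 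With this component-by-component formulation your remaining bookkeeping concerns (mass weights versus the normalization $\vol(\rO(V)(\QQ)\backslash\rO(V)(\AA))=1$, and the cocycle calibration) go through as you describe, and the theorem follows.
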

\begin{proof}
 The idea is to identify the component functions of both sides via the Siegel--Weil formula. When $r$ is even, this is \cite[Thm. 5.5]{Ma24}. Now suppose $r$ is odd and we sketch the proof as below.

According to Kudla (cf. \cite[p. 37 Proposition 4.3]{Kudla96}), for the archimedean component of the Weil representation $\omega_{\psi}$, there is a character $\chi_{V,\infty}^{\psi}$ of $\widetilde{A(\RR)}$ such that
    \[(\omega_{\psi,\infty
    }(\tilde{m})\varphi_{\infty})(\bfx)=\chi_{V,\infty}^{\psi}(\tilde{m})|\det(m(a))|^{\frac{r}{2}}\varphi_\infty(\bfx\cdot a),\]
    where $\tilde{m}=(m(a),\phi_{m(a)})\in \widetilde{A(\RR)}$, $a\in\GL_d(\RR)$ and $\varphi_\infty\in \cS(V(\RR)^d)$. 
    
    Let $\varphi_{\infty}(\bfx)=e^{\pi \tr(\bfx,\bfx)}$ be the standard Gaussian function and for $\gamma \in (M^{\vee}/M)^d$, denote $\varphi_{\infty}\otimes \varphi_{\gamma}$ by $\varphi_{\infty,\gamma}$. For $\tau=x+iy\in\HH_d$ and $a\in\GL_d(\RR)$ satisfying $aa^\top=y$, we consider the element $$g_{\tau}=n(x)m(a)\in \Sp_{2d}(\RR).$$ The component function of our previously defined vector-valued genus theta series (resp.\ vector-valued Siegel--Eisenstein series) is given as follows.
    \begin{itemize}[leftmargin=2em]
        \item \textbf{Genus theta series:}
        \begin{multline*}
            \langle\Theta^{(d)}_{\mathbf{Gen}(M)}(\tau),\fre_\gamma\rangle \\
            =\chi_{V,\infty}^{\psi}(\tilde{m})^{-1}|\det(m(a))|^{-\frac{r}{2}}\int_{\mathrm{O}(V)(\QQ)\backslash \mathrm{O}(V)(\AA)}\theta(\widetilde{g}_{\tau},h,\varphi_{\infty,\gamma})\rmd h,
        \end{multline*}
        where $\widetilde{g}_{\tau}=n(x)\tilde{m}=(g_\tau,\phi_{g_\tau})$ and the Haar measure $\rmd h$ is normalized so that $$\vol(\mathrm{O}(V)(\QQ)\backslash \mathrm{O}(V)(\AA))=1.$$ 
        \item \textbf{Siegel--Eisenstein series:}
        \[\langle\bE_{\frac{r}{2},M}^{(d)}(\tau),\fre_\gamma\rangle=\chi_{V,\infty}^{\psi}(\tilde{m})^{-1}|\det(m(a))|^{-\frac{r}{2}}E(\widetilde{g}_{\tau},s_0,\varphi_{\infty,\gamma}).\]
        
    \end{itemize}
    For the comparison of theta series, the proof is exactly the same as in the classical case (cf. \cite[Example 2.2.6]{LiChao21}). For more details, one can see \cite[Thm. 5.5]{Ma24}. For the comparison of Siegel--Eisenstein series, since \cite[Thm. 5.5]{Ma24} uses a computation by Kudla which might be difficult to locate a reference in the metaplectic case, we briefly present an alternative proof here.

    Consider $q_{\tau}(\bfx)\coloneqq e^{-\pi i\tr((\bfx,\bfx)\tau)}$. One has
    \[\omega_{\psi,\infty}(n(B))(q_{\tau}(\bfx))=\psi_{\infty}\left(\frac{1}{2}\tr((\bfx,\bfx)B)\right)q_{\tau}(\bfx)=q_{n(B)\cdot\tau}(\bfx)\]
    for $n(B)=\Big(\MU, 1\Big)$, where $B\in \sym_d(\ZZ)$ and 
    \[\omega_{\psi,\infty}(J_d)(q_{\tau}(\bfx))=\sqrt{\det(\tau)}^{-r}q_{J_d\cdot\tau}(\bfx)\] for $J_d=\Big(\MJ, \sqrt{\det(\tau)}\Big)$. Since $\Mp_{2d}(\ZZ)$ is generated by $J_d$ and $n(B)$'s, combining these equations one gets
    \[\omega_{\psi,\infty}(\tilde{g})(q_{\tau}(\bfx))=\phi_g(\tau)^{-r}q_{g\cdot\tau}(\bfx)\]
    for $\tilde{g}=(g,\phi_g(\tau))\in\Mp_{2d}(\ZZ)$.\footnote{According to Shimura \cite{Shi85}, this is basically how one gets the classical definition of $\Mp_{2d}(\RR)$ as in Section \ref{sec2} via the Weil representation. For details, one can see for example \cite[Proposition 2.4, Corollary 2.5]{Hida07}.}
    Hence \[\chi_{V,\infty}^{\psi}(\tilde{m})^{-1}|\det(m(a))|^{-\frac{r}{2}}\Phi(\tilde{g}\cdot\widetilde{g}_\tau,s_0,\varphi_{\infty,\gamma})=\phi_g(\tau)^{-r}\omega_{\psi,f}(\tg_f)(\varphi_\gamma)(0)\]
    and the rest are exactly the same as the last several lines in \cite[Thm. 5.5]{Ma24}.

The result now follows from Theorem \ref{thm:MpSW}. Note that in our case $r>2d+2\geq d+3$ for all $d\in \ZZ_{>0}$. So the condition $r>d+3$ in Theorem \ref{thm:MpSW} always holds.

\end{proof}

\section{Coset decomposition and Weil representations}\label{appB}

Recall there is a map
\begin{equation*}
    \iota: \Mp_2(\ZZ)\times \Mp_2(\ZZ)\to \Mp_4(\ZZ). 
\end{equation*}
For $\tA=(A,\phi_A),\tB=(B,\phi_B)\in \Mp_2(\ZZ)$,  we define $u(\tA)$ to be the image of $\iota(\tA, (\rI_2, 1))$ and $d(\tB)$ to be the image of $\iota((\rI_2,1), \tB)$. If we set \begin{center}
     $u(\tA)=(u(A), \phi_{u(A)})$ and $d(\tB)=(d(B),\phi_{d(B)})$,
 \end{center}
 with $u(A),d(B)\in \SL_2(\ZZ)$,   then for $(z,z')\in \HH\times \HH \subseteq \HH_2$, one has 
 \begin{equation*}
      \phi_{u(A)}(z,z')=\phi_A(z)~\hbox{and}~ \phi_{d(B)}(z,z')=\phi_B(z')
 \end{equation*}
 from the definition.  Next, for any $\alpha\in \ZZ$, we define  $$\widetilde{\cC}_\alpha=(\cC_\alpha,\phi_{\cC_\alpha}(\tau))=\Big(\footnotesize{\begin{pmatrix}
      \alpha^2+\alpha & -\alpha-1& -1& -\alpha-1\\
      -\alpha-1& 1& 0 & 0\\
      -\alpha & 1 & 0 &0 \\
        0& 0 & -1& -\alpha
  \end{pmatrix}}, \sqrt{\alpha^2z-2\alpha w+z' } ~\bigg) ,$$
 with $\tau=\begin{pmatrix}
      z &w\\ w& z' \end{pmatrix}\in \HH_2$.

It has been shown in \cite[Proposition 4.2]{Ma24} that for any $g^{(2)}\in\Sp_4(\ZZ)$ there exist some $\alpha\in \ZZ^{\leq 0}, A, B\in \SL_2(\ZZ)$ such that 
\[ \Gamma_\infty^{(2)}g^{(2)} = \Gamma_\infty^{(2)}\cC_\alpha u(A)d(B) \]
and the map
\begin{align*}
    \varphi:\Sp_4(\ZZ)&\to\mathrm{Mat}_2(\ZZ), \\
    \varphi(D\cC_\alpha u(A)d(B))&= \pm B'\begin{pmatrix}
       \alpha^2 &0\\0&1
   \end{pmatrix} A,
\end{align*}
\[   \]
is well-defined, where $D=n(S)m(U)\in\Gamma_\infty^{(2)}$ with $\det(U)=\pm1$ and $B'=\begin{pmatrix}
    d &b\\c& a
\end{pmatrix}$ if $B=\begin{pmatrix}
    a &b\\ c& d
\end{pmatrix}$. Since $(I,-1)\in\widetilde{\Gamma}_\infty^{(2)}$, it follows that we also get a decomposition
\begin{equation}\label{eq:B.decomposition}
    \widetilde{\Gamma}_\infty^{(2)}\tg^{(2)} = \widetilde{\Gamma}_\infty^{(2)}\widetilde{\cC}_\alpha u(\tA)d(\tB)
\end{equation}
for any $\tg^{(2)}\in\Mp_4(\ZZ)$. For our purpose,  we need to extend the map $\varphi$ to metaplectic covers and study their actions under the Weil representation.

\begin{definition}\label{defn:bijection}

\begin{itemize}[leftmargin=2em]
    \item For $\tB=(B,\phi_B)\in \Mp_2(\ZZ)$, we define $\tB'=(B',\phi_{B'})$ where $\phi_{B'}$ is given by  $$\phi_{B'}(z)\sqrt{z'+B'z}=\phi_B(z')\sqrt{z+Bz'}.$$
    for $z,z'\in \HH$.
    \item  For $\tg^{(2)}=(g^{(2)},\phi_{g^{(2)}})\in \Mp_4(\ZZ)$,  we define
$  \widetilde{\varphi}(\tg^{(2)})=(\varphi(g^{(2)}),\phi_{\varphi(g^{(2)})})$, where $\phi_{\varphi(g^{(2)})}$ is chosen to satisfy
\begin{equation}\label{eq:kern-eq3}
   \phi_{g^{(2)}}(z,z')= \phi_{\varphi(g^{(2)})}(z)\sqrt{z'+\varphi(g^{(2)})z}.
\end{equation}
\end{itemize}

\end{definition}

Under this map, we have $\widetilde{\varphi}(\widetilde{\cC}_\alpha)=\mathbf{\tg}_\alpha=\Big(\begin{pmatrix}
           \alpha^2 & 0\\0 & 1
       \end{pmatrix},1\Big)$
and one verifies that $\phi_{B'}(z)$ and $\phi_{\varphi(g^{(2)})}(z)$ do not depend on $z'$.
\begin{lemma}
    For any $\tA,\tB\in \Mp_2(\ZZ)$,  we have
   \begin{enumerate}[label={$(\roman*)$}]
       \item $(\tA \tB)'=\tB'\tA'$
       \item $\widetilde{\varphi}(\widetilde{\cC}_\alpha u(\tilde{A})d(\tilde{B}))= \tB' \cdot \widetilde{\bfg}_\alpha\cdot \tA$
   \end{enumerate} 
\end{lemma}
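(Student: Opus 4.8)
The plan is to establish $(i)$ and $(ii)$ separately. In both parts the statement at the level of underlying matrices is elementary, and the metaplectic refinement then reduces — via the $\Mp_2$- and $\Mp_4$-cocycle formulas together with the defining relations of Definition~\ref{defn:bijection} — to an identity in which every square root occurring in a cancellation is the square root of the \emph{same} complex number on both sides, so that no branch of $\sqrt{\,\cdot\,}$ is ever chosen ambiguously.

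For $(i)$ I would first record that, with $w=\left(\begin{smallmatrix}0&1\\1&0\end{smallmatrix}\right)$, one has $B'=wB^{\top}w$; hence $B\mapsto B'$ is an anti-involution of $\SL_2(\ZZ)$, and in particular $(AB)'=B'A'$ as matrices. It then remains to check that the $\Mp_2$-automorphy factors agree, i.e. $\phi_{(AB)'}(z)=\phi_{B'}(A'z)\,\phi_{A'}(z)$. To this end I would introduce an auxiliary variable $v\in\HH$, apply the defining relation for $\tB'$ with its two arguments set to $(A'z,v)$, multiply by the defining relation for $\tA'$ with its two arguments set to $(z,Bv)$, and use the cocycle identity $\phi_{AB}(v)=\phi_A(Bv)\,\phi_B(v)$. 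Since the complex numbers $A'z+Bv$ and $z+(AB)v$ appear on both sides, cancelling the common square roots yields $\phi_{B'}(A'z)\,\phi_{A'}(z)\,\sqrt{v+B'A'z}=\phi_{AB}(v)\,\sqrt{z+(AB)v}$; by definition of $(AB)'$ the right-hand side equals $\phi_{(AB)'}(z)\,\sqrt{v+(AB)'z}$, and cancelling the nonvanishing factor $\sqrt{v+(AB)'z}=\sqrt{v+B'A'z}$ gives the claim.

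For $(ii)$ I would compute the $\Mp_4$-automorphy factor of $\widetilde{\cC}_\alpha\,u(\tA)\,d(\tB)$ restricted to the diagonal $\HH\times\HH\subset\HH_2$. Using that $\iota$ is a group homomorphism, the diagonal orbit formulas $d(B)\cdot(z,z')=(z,Bz')$ and $u(A)d(B)\cdot(z,z')=(Az,Bz')$, and the explicit factors $\phi_{u(A)}(z,z')=\phi_A(z)$, $\phi_{d(B)}(z,z')=\phi_B(z')$ and $\phi_{\widetilde{\cC}_\alpha}(z,z')=\sqrt{\alpha^2z+z'}$, the $\Mp_4$-cocycle gives
\[
\phi_{\widetilde{\cC}_\alpha u(\tA) d(\tB)}(z,z')=\sqrt{\alpha^2(Az)+Bz'}\;\phi_A(z)\,\phi_B(z').
\]
On the other hand, by \cite[Proposition 4.2]{Ma24} (applied with trivial $D$-part, so that the sign is pinned down) the underlying matrix of $\widetilde{\varphi}(\widetilde{\cC}_\alpha u(A) d(B))$ is $B'\left(\begin{smallmatrix}\alpha^2&0\\0&1\end{smallmatrix}\right)A$, which is also the underlying matrix of $\tB'\,\btg_\alpha\,\tA$; its Möbius action sends $z$ to $B'(\alpha^2 Az)$, and the $\Mp_2$-cocycle together with $\widetilde{\varphi}(\widetilde{\cC}_\alpha)=\btg_\alpha$ and $\phi_{\btg_\alpha}\equiv1$ computes the automorphy factor of $\tB'\,\btg_\alpha\,\tA$ to be $\phi_{B'}(\alpha^2 Az)\,\phi_A(z)$. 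Plugging the displayed formula into \eqref{eq:kern-eq3} and cancelling $\phi_A(z)$, the desired identity becomes precisely the defining relation $\phi_{B'}(u)\,\sqrt{z'+B'u}=\phi_B(z')\,\sqrt{u+Bz'}$ for $\tB'$ evaluated at $u=\alpha^2 Az\in\HH$, which holds by construction. The degenerate value $\alpha=0$ (where $\btg_0$ is singular and $\alpha^2Az\notin\HH$) is handled separately using the ``$\pm1$'' clause in the definition of $\widetilde{\varphi}$, by a routine degeneration of the same computation.

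The main obstacle I anticipate is purely organizational rather than conceptual: one must keep the $\Mp_2$/$\Mp_4$ multiplication conventions, the actions of $u(\,\cdot\,)$ and $d(\,\cdot\,)$ on the diagonal, and the sign normalization of $\varphi$ from \cite{Ma24} mutually consistent, so that in each cancellation one is genuinely cancelling two square roots of the \emph{identical} complex number (not two numbers that merely differ by a sign). Once those conventions are fixed at the outset, both parts follow from the bookkeeping sketched above.
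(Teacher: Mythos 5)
Your proposal is correct and takes essentially the same route as the paper: part $(i)$ is the same manipulation of the defining relations for $\tA'$, $\tB'$ together with the cocycle (the paper writes it as one chained equality rather than multiplying two relations and cancelling a common square root), and part $(ii)$ is the same computation of the $\Mp_4$-automorphy factor of $\widetilde{\cC}_\alpha u(\tA)d(\tB)$ on the diagonal $\HH\times\HH$, reduced via \eqref{eq:kern-eq3} to the defining relation of $\tB'$ evaluated at $\alpha^2 Az$. Your explicit attention to the degenerate case $\alpha=0$ is a detail the paper leaves implicit.
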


\begin{proof}
For $(i)$,   from the definition, one can directly verify that 
    \begin{align*}
        \phi_{(A \cdot B)'}(z)\sqrt{z'+(A \cdot B)'z} 
        &= \phi_{A'}(z)\phi_{B}(z')\sqrt{A'z+B z'} \\
        &=\phi_{B'\cdot A'}(z)\sqrt{z'+(A\cdot B)'z}.
    \end{align*}
    
For $(ii)$,  set $\widetilde{\cC}_{\alpha}\cdot u(\tilde{A})\cdot d(\tilde{B})=(g^{(2)},\phi_{g^{(2)}})$.  As $\varphi(g^{(2)})=B'\cdot \bfg_\alpha\cdot A$, it suffices to show that $$ 
\frac{\phi_{g^{(2)}}(z,z')}{\sqrt{z'+\varphi(g^{(2)})}z}=\phi_{B'}(\bfg_\alpha\cdot Az) \phi_A(z).$$ This is clear because  we have 
    \begin{equation}
        \begin{aligned}
            \mathrm{LHS}&=\frac{\left(\sqrt{\alpha^2Az+Bz'} \right)\phi_{u(A)}(z,Bz')\phi_{d(B)}(z,z')}{\sqrt{z'+B'\alpha^2Az}}\\
            &=\frac{\left(\sqrt{\alpha^2Az+Bz'} \right)\phi_{A}(z)\phi_{B}(z')}{\sqrt{z'+B'\alpha^2Az}}\\
            &=\phi_{B'}(\alpha^2 Az)\phi_A(z)\\
            &=\mathrm{RHS}.
        \end{aligned}
    \end{equation}
\end{proof}

Now it follows from \cite[Proposition 4.2]{Ma24} that there is a well-defined injection
\begin{equation}
\begin{aligned}
    \widetilde{\Gamma}^{(2)}_\infty\backslash \Mp_4(\ZZ)& \to ({\bf \widetilde{Mat}}_2(\ZZ)\times \ZZ^{\leq 0})/(\langle (-I,i)\rangle\times0) \\  [\tg^{(2)}] &\mapsto (\varphi(\tg^{(2)}), \alpha).
\end{aligned} 
\end{equation}

We get

\begin{lemma}[{\cite[Lem. 4.3]{Ma24}}]\label{lem:mu lem4.3}
    Let $\alpha \in \ZZ$, $\tA \in \widetilde{\bY}_{\alpha^2}$ and $\tB \in \Mp_2(\ZZ)$. Then
    $$\sum_{\gamma \in G_M}(\fre_{\gamma}\mid[\tA])\otimes \rho_M(\tB)^{-1}\fre_{\gamma}=\sum_{\gamma \in G_M}(\fre_{\gamma}\mid [\tB'\tA])\otimes\fre_{\gamma}.$$
\end{lemma}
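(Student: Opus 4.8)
\emph{Plan.} I would reduce the identity to a single algebraic fact about the Weil representation, namely that it intertwines the ``adjoint'' operation $\tB\mapsto\tB'$ on $\Mp_2(\ZZ)$ with matrix transposition:
\[
\rho_M(\tB')=\rho_M(\tB)^{\mathsf t}\qquad\text{for all }\tB\in\Mp_2(\ZZ),
\]
where $(\cdot)^{\mathsf t}$ denotes the transpose with respect to the standard basis $\{\fre_\gamma\}_{\gamma\in G_M}$ of $\CC[G_M]$. Granting this, the lemma becomes bookkeeping with the extended $\widetilde{\bY}_{\alpha^2}$-action. I would first record the elementary identity: for any endomorphisms $S,T$ of $\CC[G_M]$,
\[
\sum_{\gamma\in G_M}(S\fre_\gamma)\otimes(T\fre_\gamma)=\sum_{\gamma\in G_M}\bigl(S\,T^{\mathsf t}\,\fre_\gamma\bigr)\otimes\fre_\gamma,
\]
which follows by writing $T\fre_\gamma=\sum_\delta t_{\delta\gamma}\fre_\delta$ and reindexing.

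Next I would spell out the extended action. Writing $\tA=\tg\cdot\btg_\alpha\cdot\tg'$ with $\tg,\tg'\in\Mp_2(\ZZ)$, the action of $\widetilde{\bY}_{\alpha^2}$ on $\CC[G_M]$ is (by \cite[Prop.~5.1]{BS}, well-definedly) implemented by the operator $R_\tA\coloneqq\rho_M(\tg')^{-1}\circ P_\alpha\circ\rho_M(\tg)^{-1}$, where $P_\alpha\fre_\mu\coloneqq\fre_{\alpha\mu}$; thus $\fre_\gamma\mid[\tA]=R_\tA\fre_\gamma$. Since $\tB'\in\Mp_2(\ZZ)$, one has $\tB'\tA=(\tB'\tg)\cdot\btg_\alpha\cdot\tg'\in\widetilde{\bY}_{\alpha^2}$, and since $\rho_M$ is a homomorphism,
\[
R_{\tB'\tA}=\rho_M(\tg')^{-1}\circ P_\alpha\circ\rho_M(\tB'\tg)^{-1}=R_\tA\circ\rho_M(\tB')^{-1}.
\]
Applying the elementary identity with $S=R_\tA$ and $T=\rho_M(\tB)^{-1}$, and using $(\rho_M(\tB)^{-1})^{\mathsf t}=(\rho_M(\tB)^{\mathsf t})^{-1}=\rho_M(\tB')^{-1}$ from the displayed claim, the left-hand side of the lemma becomes
\[
\sum_\gamma\bigl(R_\tA\,\rho_M(\tB')^{-1}\fre_\gamma\bigr)\otimes\fre_\gamma=\sum_\gamma\bigl(R_{\tB'\tA}\fre_\gamma\bigr)\otimes\fre_\gamma=\sum_\gamma(\fre_\gamma\mid[\tB'\tA])\otimes\fre_\gamma,
\]
which is the right-hand side; the argument is uniform in $\alpha\in\ZZ$, covering $\alpha=0$ as well.

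It then remains to prove the claim $\rho_M(\tB')=\rho_M(\tB)^{\mathsf t}$. Both $\tB\mapsto\rho_M(\tB')$ and $\tB\mapsto\rho_M(\tB)^{\mathsf t}$ are anti-homomorphisms $\Mp_2(\ZZ)\to\GL(\CC[G_M])$: the first because $(\tA\tB)'=\tB'\tA'$ (part~(i) of the preceding lemma) and $\rho_M$ is a homomorphism, the second because transposition reverses products. Since $\Mp_2(\ZZ)$ is generated by $J_1=\bigl(\left(\begin{smallmatrix}0&-1\\1&0\end{smallmatrix}\right),\sqrt z\bigr)$ and $n(1)=\bigl(\left(\begin{smallmatrix}1&1\\0&1\end{smallmatrix}\right),1\bigr)$, it suffices to check equality on these two elements. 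Using the defining relation $\phi_{\tB'}(z)\sqrt{z'+B'z}=\phi_\tB(z')\sqrt{z+Bz'}$ together with $B'=\left(\begin{smallmatrix}d&b\\c&a\end{smallmatrix}\right)$ for $B=\left(\begin{smallmatrix}a&b\\c&d\end{smallmatrix}\right)$, a direct computation gives $n(1)'=n(1)$ and $J_1'=J_1$ (for the latter: the two candidates for $\phi_{J_1'}(z)$ are holomorphic and non-vanishing on $\HH$ with common square $z$ and coincide at $z=z'=i$, hence are equal). On the other hand $\rho_M(n(1))$ is diagonal, while the matrix of $\rho_M(J_1)$ has $(\delta,\gamma)$-entry $\tfrac{\bfe(-\sign(G_M)/8)}{|G_M|^{1/2}}\bfe(-(\gamma,\delta))$, which is symmetric because $(\cdot,\cdot)$ is symmetric; so each of these operators equals its own transpose, and the two anti-homomorphisms agree on generators.

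The step I expect to be the main obstacle is the conceptual one: recognizing that the lemma is exactly the statement that $\rho_M$ converts the $\tB\mapsto\tB'$ operation into matrix transpose, and then the small branch-of-square-root verification that $J_1'=J_1$. Once that identification is made, the manipulations with $R_\tA$, $P_\alpha$, and the tensor-diagonal element are entirely routine.
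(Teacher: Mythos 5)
Your proposal is correct. Note, however, that the paper does not actually prove this lemma: it is imported wholesale by the citation to \cite[Lem.~4.3]{Ma24}, so there is no in-text argument to compare against. Your proof is a sound, self-contained substitute, and—importantly—it is carried out directly in the metaplectic setting (with the component functions $\phi_B$, the operation $\tB\mapsto\tB'$ of Definition \ref{defn:bijection}, and half-integral weight allowed), which is precisely the setting in which the paper needs the statement but for which it only gestures at the even-signature reference. The structure of your argument is the natural one: the tensor-reindexing identity $\sum_\gamma (S\fre_\gamma)\otimes(T\fre_\gamma)=\sum_\gamma (ST^{\mathsf t}\fre_\gamma)\otimes\fre_\gamma$, the left-coset compatibility $R_{\tB'\tA}=R_{\tA}\circ\rho_M(\tB')^{-1}$ (which uses only the well-definedness of the extended double-coset action from \cite[Prop.~5.1]{BS} and its $\alpha=0$ extension noted in the paper), and the key identity $\rho_M(\tB')=\rho_M(\tB)^{\mathsf t}$, proved by observing that both sides are anti-homomorphisms (using part (i) of the preceding lemma) and checking the generators $n(1)$ and $J_1$ against the explicit Weil-representation formulas, where $\rho_M(n(1))$ is diagonal and $\rho_M(J_1)$ is symmetric since $(\cdot,\cdot)$ is. Your branch verification $J_1'=J_1$ at $z=z'=i$ is also fine, since $zz'-1\neq 0$ on $\HH\times\HH$ makes both candidate functions nonvanishing, so the sign is constant. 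The only cosmetic caveat is that the paper's convention $\fre_\gamma\mid[\btg_\alpha]=\fre_{\alpha\gamma}$ is stated for $\alpha\geq 0$; for negative $\alpha$ one should read $\widetilde{\bY}_{\alpha^2}=\widetilde{\bY}_{|\alpha|^2}$ and fix the corresponding operator $P$, which does not affect your argument since it only enters through $R_\tA$ as a black box.
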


Then we have 
\begin{proposition}\label{prop:coset-action}For $\tg^{(2)}=\widetilde{\cC}_\alpha u(\tA) d(\tB)$ with $\alpha\leq 0$, we have     \begin{equation}\label{eq:B.3}
 \rho^{(2)}_M(\tg^{(2)})^{-1}(\fre_0\otimes \fre_0)=\frac{\bfe(\sign(G_M)/8)}{\sqrt{|G_M|}}\sum\limits_{\gamma\in G_M} (\fre_\gamma\mid [\widetilde{\varphi}(\tg^{(2)})])\otimes \fre_\gamma.
\end{equation}
\end{proposition}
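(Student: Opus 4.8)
The plan is to strip off the factors $u(\tA)$ and $d(\tB)$ using that $\iota$ is a group homomorphism compatible with the Weil representations, reducing the statement to a single identity for $\widetilde{\cC}_\alpha$, and then to reassemble via Lemma~\ref{lem:mu lem4.3} and part~(ii) of the lemma preceding it. Concretely: since $u(\tA)d(\tB)=\iota(\tA,(\rI_2,1))\,\iota((\rI_2,1),\tB)=\iota(\tA,\tB)$ and $\iota$ intertwines $\rho_M\otimes\rho_M$ with $\rho_M^{(2)}$ under $\CC[G_M]\otimes\CC[G_M]\cong\CC[G_M^{(2)}]$, one has $\rho_M^{(2)}(u(\tA)d(\tB))=\rho_M(\tA)\otimes\rho_M(\tB)$, hence
\[
\rho_M^{(2)}(\tg^{(2)})^{-1}(\fre_0\otimes\fre_0)=\bigl(\rho_M(\tA)^{-1}\otimes\rho_M(\tB)^{-1}\bigr)\bigl(\rho_M^{(2)}(\widetilde{\cC}_\alpha)^{-1}(\fre_0\otimes\fre_0)\bigr).
\]
So it suffices to prove the key identity
\[
\rho_M^{(2)}(\widetilde{\cC}_\alpha)^{-1}(\fre_0\otimes\fre_0)=\frac{\bfe(\sign(G_M)/8)}{\sqrt{|G_M|}}\sum_{\gamma\in G_M}\fre_{\alpha\gamma}\otimes\fre_\gamma,
\]
noting $\fre_{\alpha\gamma}=\fre_\gamma\mid[\btg_\alpha]=\fre_\gamma\mid[\widetilde{\varphi}(\widetilde{\cC}_\alpha)]$ since $\widetilde{\varphi}(\widetilde{\cC}_\alpha)=\btg_\alpha$, and then to propagate the $u(\tA),d(\tB)$ factors.

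For the key identity I would factor $\widetilde{\cC}_\alpha$ into the standard generators $J_2$ and the $n(B)$, $B\in\sym_2(\ZZ)$ (and possibly an $m(U)$), of $\Mp_4(\ZZ)$: the $\Sp_4(\ZZ)$-level factorization of the block matrix $\cC_\alpha$ displayed in Appendix~\ref{appB} is a finite mechanical computation, which can be organized inductively in $|\alpha|$ so as to reduce to the short word for $\widetilde{\cC}_0$. Applying the defining formulas for $\rho_M^{(2)}$ on these generators and multiplying out produces the right-hand side: the exponential factors together with the $|G_M|$-powers combine to $|G_M|^{-1/2}$, the $8$-th roots of unity coming from the $J_2$-terms combine to $\bfe(\sign(G_M)/8)$, and one simultaneously tracks the holomorphic square roots $\phi$ so that their product is exactly $\phi_{\cC_\alpha}(\tau)=\sqrt{\alpha^2 z-2\alpha w+z'}$ on $\HH\times\HH\subseteq\HH_2$ — the latter being what fixes the sign of the constant and certifies $\phi_{\widetilde{\varphi}(\widetilde{\cC}_\alpha)}=1$, i.e.\ $\widetilde{\varphi}(\widetilde{\cC}_\alpha)=\btg_\alpha$.

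To finish, apply $\rho_M(\tA)^{-1}\otimes\mathrm{id}$ to the key identity: since $\btg_\alpha\tA\in\widetilde{\bY}_{\alpha^2}$ and the extended action on $\widetilde{\bY}_{\alpha^2}$ is well defined by \cite[Prop.~5.1]{BS} (for $\alpha=0$ one uses the further extension to $\widetilde{\bY}_0$ recorded after the definition of that action), the first tensor slot becomes $\rho_M(\tA)^{-1}\fre_{\alpha\gamma}=\fre_\gamma\mid[\btg_\alpha]\mid[\tA]=\fre_\gamma\mid[\btg_\alpha\tA]$, giving
\[
\rho_M^{(2)}(\tg^{(2)})^{-1}(\fre_0\otimes\fre_0)=\frac{\bfe(\sign(G_M)/8)}{\sqrt{|G_M|}}\sum_{\gamma\in G_M}\bigl(\fre_\gamma\mid[\btg_\alpha\tA]\bigr)\otimes\bigl(\rho_M(\tB)^{-1}\fre_\gamma\bigr).
\]
Then Lemma~\ref{lem:mu lem4.3}, applied with $\btg_\alpha\tA\in\widetilde{\bY}_{\alpha^2}$ in the role of $\tA$ and with $\tB$, moves $\rho_M(\tB)^{-1}$ across the tensor, turning $\btg_\alpha\tA$ into $\tB'\btg_\alpha\tA$; and part~(ii) of the lemma preceding Lemma~\ref{lem:mu lem4.3} identifies $\tB'\btg_\alpha\tA=\widetilde{\varphi}(\widetilde{\cC}_\alpha u(\tA)d(\tB))=\widetilde{\varphi}(\tg^{(2)})$, which yields \eqref{eq:B.3}.

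The hard part will be the key identity for $\widetilde{\cC}_\alpha$. The $\Sp_4(\ZZ)$-factorization of $\cC_\alpha$ and the subsequent matrix arithmetic are routine, but the delicate point is to choose the branches of all the square roots along the chosen word so that they compose to exactly $\phi_{\cC_\alpha}$; a wrong branch would flip the sign of $\bfe(\sign(G_M)/8)$ and corrupt the metaplectic datum of $\widetilde{\varphi}(\widetilde{\cC}_\alpha)$, which feeds into every later coset-by-coset computation.
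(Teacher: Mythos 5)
Your proposal follows essentially the same route as the paper: reduce to the single identity for $\widetilde{\cC}_\alpha$ (using that $\iota$ intertwines $\rho_M\otimes\rho_M$ with $\rho_M^{(2)}$ and that $\widetilde{\varphi}(\widetilde{\cC}_\alpha)=\btg_\alpha$), then reassemble the general case by moving $\rho_M(\tB)^{-1}$ across the tensor with Lemma~\ref{lem:mu lem4.3} and identifying $\tB'\btg_\alpha\tA=\widetilde{\varphi}(\tg^{(2)})$ via part~(ii) of the preceding lemma, which is exactly the paper's argument. The only difference is the base case: you sketch a from-scratch generator computation organized inductively in $|\alpha|$, whereas the paper writes $\widetilde{\cC}_\alpha$ as the explicit fixed-length word $J_2^{-1}\,n(\cdot)\,J_2^{-1}\,n(\cdot)\,J_2^{-1}\,n(\cdot)$ and cites \cite[Proposition~4.4]{Ma24} for that computation, including the square-root branch bookkeeping you rightly flag as the delicate point.
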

\begin{proof}
It is straightforward to check that $\widetilde{\cC}_{\alpha}$ admits the following decomposition
$$\widetilde{\cC}_{\alpha}=J_2^{-1}\cdot n\left(\begin{pmatrix}
       0 &-1\\-1& -\alpha
   \end{pmatrix}\right)\cdot J_2^{-1}\cdot n\left(\begin{pmatrix}
       \alpha^2+\alpha &-\alpha-1\\-\alpha-1& 1
   \end{pmatrix}\right)\cdot J_2^{-1}\cdot n\left(\begin{pmatrix}
       0 &0\\0& 1
   \end{pmatrix}\right)$$
 in $\Mp_4(\ZZ)$. 

Given this decomposition, when $\tg^{(2)}=\widetilde{\cC}_{\alpha}$, the equation \ref{eq:B.3} holds by a direct computation (cf.~\cite[Proposition 4.4]{Ma24}).\footnote{A careful reader may note that since our definition of "$J$" is different from that in \cite{Ma24}, the action of "$J_2$" in \cite{Ma24} is actually the action of "$J_2^{-1}$" in our notation. Therefore, the reason why we use $J_2^{-1}$ in the above decomposition is to keep the consistency with the computations in \cite[Proposition 4.4]{Ma24}.} Then from the decomposition \eqref{eq:B.decomposition},  we have 
\begin{align*}
    \rho_M^{(2)}&(\tg^{(2)})^{-1}(\fre_0 \otimes \fre_0) \\
    &= \rho_M^{(2)}(\widetilde{\cC}_{\alpha} u(\tA) d(\tB))^{-1}(\fre_0 \otimes \fre_0) \\
    &= \rho_M^{(2)}(u(\tA) d(\tB))^{-1} \rho_M^{(2)}(\widetilde{\cC}_{\alpha})^{-1}(\fre_0 \otimes \fre_0) \\
    &= \rho_M^{(2)}(u(\tA) d(\tB))^{-1} \frac{\bfe(\sign(G_M)/8)}{\sqrt{|G_M|}}\sum\limits_{\gamma\in G_M} (\fre_\gamma\mid[\varphi(\widetilde{\cC}_{\alpha})])\otimes \fre_\gamma \\
    &= \frac{\bfe(\sign(G_M)/8)}{\sqrt{|G_M|}}\sum\limits_{\gamma\in G_M} (\fre_\gamma\mid[\varphi(\widetilde{\cC}_{\alpha})]\mid[\tA])\otimes \rho_M(\tB)^{-1}(\fre_\gamma) \\
    &= \frac{\bfe(\sign(G_M)/8)}{\sqrt{|G_M|}}\sum\limits_{\gamma\in G_M} (\fre_\gamma\mid[\tB' \varphi(\widetilde{\cC}_{\alpha})\tA])\otimes \fre_\gamma \\
    &= \frac{\bfe(\sign(G_M)/8)}{\sqrt{|G_M|}}\sum\limits_{\gamma\in G_M} (\fre_\gamma\mid[\varphi(\widetilde{\cC}_{\alpha} u(\tA) d(\tB))])\otimes \fre_\gamma \\
    &= \frac{\bfe(\sign(G_M)/8)}{\sqrt{|G_M|}}\sum\limits_{\gamma\in G_M} (\fre_\gamma\mid[\varphi(\tg^{(2)})])\otimes \fre_\gamma.
\end{align*}
Here we use Lemma \ref{lem:mu lem4.3} in the fifth equation.
\end{proof}

\section{Hecke kernels}\label{appHK}

\begin{proposition}\label{prop:Hecke-Kernel}
For $\alpha>0$, we have 
    \begin{equation}
    \frac{\bT_{\alpha^2}(f)(z')}{\alpha^{2k-2}}=C\int_{\SL_2(\ZZ)\backslash \HH}\langle f,  K_\alpha(z,-\overline{z}')\rangle y^{k}\frac{\rmd x \rmd y}{y^2},
\end{equation}
 where the constant $C=\frac{2^{k-4}(k-1)}{\pi i^k}$.
\end{proposition}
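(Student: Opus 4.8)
The plan is to follow the argument of \cite[Prop.~3.7]{Ma24} and verify that it is insensitive to the parity of $\sign(M)$, because the whole computation takes place inside the metaplectic framework and only uses unitarity of $\rho_M$ together with the weight-$k$ transformation law. The starting point is the classical scalar reproducing-kernel identity on $\HH$: for $k>2$ and any holomorphic $F\colon\HH\to\CC$ of moderate growth, an integral of the shape
\[
\int_{\HH} F(z)\,\bigl(z-\overline{z'}\bigr)^{-k}\, y^{k}\,\frac{\rmd x\,\rmd y}{y^2}
\]
equals $C^{-1}F(z')$, where $C=\tfrac{2^{k-4}(k-1)}{\pi i^k}$; this is the statement that $(z-\overline{z'})^{-k}$ is, up to the constant $C$, the Bergman kernel of the weight-$k$ space, and it is proved by expanding in $z'$ and evaluating an elementary Beta-type integral. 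The constant $C$ in the Proposition is exactly this normalisation.

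Next I would substitute the definition of $K_\alpha(z,-\overline{z'})$ into the right-hand side and use the left-coset decomposition $\widetilde{\bY}_{\alpha^2}=\bigsqcup_i \Mp_2(\ZZ)\,\widetilde{\delta}_i$, with the representatives $\widetilde{\delta}_i$ as in Definition \ref{hecke}, so that $\sum_{\widetilde g\in\widetilde{\bY}_{\alpha^2}}=\sum_i\sum_{\widetilde h\in\Mp_2(\ZZ)}$ with $\widetilde g=\widetilde h\widetilde{\delta}_i$. Pairing with $f(z)$ by the antilinear map $\langle-,-\rangle$ turns each inner summand into an $\Mp_2(\ZZ)$-invariant density on $\HH$: the weight-$k$ cocycle $\phi_{h\delta_i}(z)^{-2k}$, the factor $(-\overline{z'}+(h\delta_i)z)^{-k}$, the measure $y^k\,\rmd x\,\rmd y/y^2$, and the Weil-representation factor combine with the modularity relation $f(hz)=\phi_h(z)^{2k}\rho_M(\widetilde h)f(z)$ to cancel all dependence on $\widetilde h$. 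Hence the integral over $\SL_2(\ZZ)\backslash\HH$ of the sum over $\Mp_2(\ZZ)$ unfolds, up to a fixed rational constant coming from the finite centre of $\Mp_2(\ZZ)$, to a single integral over $\HH$ for each coset $\widetilde{\delta}_i$. Evaluating each of these by the reproducing-kernel identity of the first step, performing the change of variables $z\mapsto\delta_i z$ (whose Jacobian supplies the powers of $\alpha$, matching the normalisation $\alpha^{k-2}$ in Definition \ref{hecke} against the $\alpha^{2k-2}$ in the statement), and comparing with $\bT_{\alpha^2}(f)(z')=\alpha^{k-2}\sum_i\sum_{\gamma}(f_\gamma\mid_k[\widetilde{\delta}_i])\otimes(\fre_\gamma\mid[\widetilde{\delta}_i])$ yields the claimed formula.

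Throughout, convergence is not an issue: since $k\geq r/2>3$, every series and every integral above converges absolutely and locally uniformly, so all the rearrangements and the unfolding are legitimate. I expect the main obstacle to be the bookkeeping of phase factors in the genuinely new half-integral-weight case $\sign(M)$ odd: one must keep track of the metaplectic branch $\phi_g(z)^{-2k}=(\pm1)^{2k}(cz+d)^{-k}$, of the interaction of the antilinear pairing $\langle-,-\rangle$ with the substitution $z'\mapsto-\overline{z'}$, and of the eighth root of unity $\bfe(-\tfrac{1}{8}\sign(G_M))$ occurring in $\rho_M(J_1)$. The last of these, being a unitary scalar, cancels between $f$ and $K_\alpha$ in the pairing exactly as in the even case, so the final constant is the parity-independent $C=\tfrac{2^{k-4}(k-1)}{\pi i^k}$; confirming this cancellation and that no new metaplectic subtlety appears is the crux of the appendix.
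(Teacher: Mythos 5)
Your strategy is sound and would prove the proposition, but it is organized quite differently from the paper's argument. The paper first reduces to $\alpha=1$ via the relation $\bT_{\alpha^2}(K_\alpha)=\alpha^{2k-2}K_\alpha$, and then, instead of unfolding against a reproducing kernel, rewrites $K_1(z,z')$ using the coset decomposition by $\langle\widetilde{T}\rangle$ together with the Lipschitz summation formula, obtaining the generating series $2\frac{(-2\pi i)^k}{\Gamma(k)}\sum_{\gamma}\sum_{m>0}m^{k-1}\bP_{\gamma,m}(z)\otimes\bfe(mz')\fre_\gamma$ of vector-valued Poincar\'e series; the constant $C$ then drops out of Bruinier's formula $\langle f,\bP_{\gamma,m}\rangle_{\mathrm{Pet}}=\frac{2\Gamma(k-1)}{(4\pi m)^{k-1}}c(m,\gamma)$, so all the half-integral-weight and metaplectic bookkeeping is inherited from the vector-valued Poincar\'e-series machinery already available in the literature. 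Your route — direct unfolding of the $\widetilde{\bY}_{\alpha^2}$-sum against Petersson's weight-$k$ Bergman kernel, handling all $\alpha$ at once — is the classical kernel-function argument; it is more self-contained and avoids both the reduction to $\alpha=1$ and the Lipschitz/Poincar\'e step, but it shifts the burden onto exactly the points you defer: the legitimacy of the unfolding on the double cover (each $g\in\SL_2(\ZZ)$ has two metaplectic lifts, and this factor of $2$ is what turns the naive Bergman constant $\frac{2^{k-3}(k-1)}{\pi i^k}$ into the stated $C=\frac{2^{k-4}(k-1)}{\pi i^k}$), the interaction of the antilinear pairing with the holomorphic kernel factor and the unitary Weil representation, and the determinant-$\alpha^2$ change of variables producing the correct power of $\alpha$. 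Note also that the reproducing identity should be applied to the translates $f\mid_k[\tilde\delta_i]$, whose components have cuspidal decay; "moderate growth" alone would not justify the absolute convergence you invoke, though in this application the decay is sufficient. With those details carried out, your proof is a valid and genuinely different alternative to the one in the appendix.
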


\begin{proof}
We only deal with the odd signature case. As in the even signature case,  we have $\bT_{\alpha^2}(K_\alpha(z,z'))=\alpha^{2k-2}K_\alpha(z,z')$ and it suffices to show that
\begin{equation*}
    \bT_1(f)(z')=f(z')=C\int_{\SL_{2}(\ZZ)\backslash \HH} \langle f, K_1(z,-\bar{z}')\rangle y^{k}\frac{\rmd x \rmd y}{y^2}.
\end{equation*}
Denote the element $\left(T=\left(\begin{smallmatrix}
    1 &1\\0 &1
\end{smallmatrix}\right), 1\right)\in\Mp_2(\ZZ)$ by $\widetilde{T}$. Then $K_1(z,z')$ equals

\begin{equation}\label{hecke=poincare}
\begin{aligned}
    &\quad \sum\limits_{\gamma\in G_M} \sum\limits_{\tg\in \Mp_2(\ZZ) } \frac{\phi_g(z)^{-2k}}{(z'+g\cdot z)^k}\rho_M(\tg)^{-1}(\fre_\gamma)\otimes \fre_\gamma\\
   &=\sum\limits_{\gamma\in G_M} \sum\limits_{\tg\in \langle\widetilde{T}\rangle \backslash\Mp_2(\ZZ) } \sum\limits_{n\in \ZZ} \frac{\phi_{T^ng}(z)^{-2k}}{(z'+T^n g\cdot z)^k}\rho_M(\widetilde{T}^n\tg)^{-1}(\fre_\gamma)\otimes \fre_\gamma\\
   &=\sum\limits_{\gamma\in G_M} \sum\limits_{\tg\in \langle\widetilde{T}\rangle \backslash\Mp_2(\ZZ) } \sum\limits_{n\in \ZZ} \frac{\phi_{g}(z)^{-2k}}{(z'+g\cdot z+n)^k} \bfe(-n\frq(\gamma))\rho_M(\tg)^{-1}(\fre_\gamma)\otimes \fre_\gamma\\
   &= 2\frac{(-2\pi i)^k}{\Gamma(k)} \sum\limits_{\gamma\in G_M}\sum\limits_{\substack{m\in\ZZ+\frq(\gamma)\\m>0}} m^{k-1}\bP_{\gamma, m}(z)\otimes \bfe(mz')\fre_\gamma
\end{aligned}
\end{equation}
where $\bP_{\gamma,m}(z)=\frac{1}{2}\sum\limits_{\tg \in \widetilde{\Gamma}_\infty^{+}\backslash\Mp_2(\ZZ)} (\bfe(mz)\fre_\gamma) \mid_{k,G_M}[\tg] $ is the Poincar\'e series defined in \cite[\S 1.2]{Br02}. Here, the last identity comes from the Lipschitz Summation Formula in \cite{Lip89} (see also \cite[Thm. 1]{PPA01}), which we recall in Lemma \ref{lem:summation} below.

Write the cusp form $f$ as a Fourier expansion
\[f(z)=\sum\limits_{\gamma\in G_M}\sum\limits_{\substack{m\in\ZZ+\frq(\gamma)\\m>0}} c(m,\gamma)\bfe(mz)\fre_\gamma.\]
By \cite[Proposition 1.5]{Br02}, one has $\langle f,\bP_{\gamma,m}(z)\rangle_{\rm Pet}=\frac{2\cdot\Gamma(k-1)}{(4\pi m)^{k-1}}c(m,\gamma)$. Combine this with \eqref{hecke=poincare}, we finally get

\begin{align*}
&\int_{\SL_{2}(\ZZ)\backslash \HH} \langle f, K_1(z,-\bar{z}')\rangle y^{k}\frac{\rmd x \rmd y}{y^2}\\
=
&2\frac{(2\pi i)^k}{\Gamma(k)}\frac{2\cdot\Gamma(k-1)}{(4\pi)^{k-1}}\sum\limits_{\gamma\in G_M}\sum\limits_{\substack{m\in\ZZ+\frq(\gamma)\\m>0}} c(m,\gamma)\bfe(mz')\fre_\gamma
\\=&\frac{\pi i^k}{2^{k-4}(k-1)}f(z').&
\end{align*}
\end{proof}

\begin{lemma}[Lipschitz Summation Formula]\label{lem:summation}
Let $k\in \CC$ with $\mathrm{Re}(k)>1$ and $z\in\HH$. Then for $x\in\RR$, 
\[\sum_{n=-\infty}^{\infty} \bfe(n x)(z+n)^{-k}=\frac{(-2 \pi i)^k}{\Gamma(k)} \sum_{\substack{r \in \mathbb{Z}-x \\ r>0}} r^{k-1} \bfe(r z)\]
where we chose a branch of the logarithm compatible with our choice of $\sqrt{\cdot}$.
\end{lemma}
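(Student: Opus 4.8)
The statement is the classical Lipschitz summation formula, so the plan is to run the standard Fourier‑analytic argument, taking care with the branch of $w^{-k}$ (the principal one, consistent with our convention for $\sqrt{\cdot}$). First I would record convergence: since $\mathrm{Re}(k)>1$ we have $|\bfe(nx)(z+n)^{-k}|=|z+n|^{-\mathrm{Re}(k)}=O\bigl((1+|n|)^{-\mathrm{Re}(k)}\bigr)$ locally uniformly in $z\in\HH$, so the left‑hand side converges absolutely and locally uniformly on $\HH$; and $|r^{k-1}\bfe(rz)|=O\bigl(r^{\mathrm{Re}(k)-1}e^{-2\pi r\,\mathrm{Im}(z)}\bigr)$, so the right‑hand side converges as well. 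Then I would introduce
\[
G(z)\coloneqq \bfe(xz)\sum_{n\in\ZZ}\bfe(nx)(z+n)^{-k}=\sum_{n\in\ZZ}\bfe\bigl(x(z+n)\bigr)(z+n)^{-k},
\]
which is holomorphic on $\HH$, and which by the reindexing $n\mapsto n-1$ satisfies $G(z+1)=G(z)$. Hence $G$ factors through $q=\bfe(z)$ on the punctured disc $0<|q|<1$ and admits a Laurent expansion $G(z)=\sum_{m\in\ZZ}a_m\bfe(mz)$ (locally uniformly convergent), with $a_m=\int_{0}^{1}G(t+iy_0)\,\bfe\bigl(-m(t+iy_0)\bigr)\,\rmd t$ for any $y_0=\mathrm{Im}(z)$ (indeed any $y_0>0$). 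Substituting the series for $G$, using $\bfe(-mn)=1$ to rewrite $\bfe(-m(t+iy_0))$ as $\bfe(-m(t+iy_0+n))$, and interchanging sum and integral (legitimate since $\sum_n\int_0^1|t+iy_0+n|^{-\mathrm{Re}(k)}\,\rmd t<\infty$), the sum over $n$ unfolds the unit segments into the full horizontal line $L_{y_0}=\{s\in\CC:\mathrm{Im}(s)=y_0\}$, giving
\[
a_m=\int_{L_{y_0}}s^{-k}\,\bfe\bigl((x-m)s\bigr)\,\rmd s.
\]

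The heart of the argument is the evaluation of this contour integral by shifting $L_{y_0}$. If $m\le x$ (equivalently $m-x\le 0$), the integrand $s^{-k}\bfe((x-m)s)$ is holomorphic on $\{\mathrm{Im}(s)\ge y_0\}$, and shifting $L_{y_0}$ upward to $L_Y$ and letting $Y\to\infty$ kills the integral: the bound $\int_{\RR}|t+iY|^{-\mathrm{Re}(k)}\,\rmd t=O(Y^{1-\mathrm{Re}(k)})$ (valid since $\mathrm{Re}(k)>1$), together with the decay $|\bfe((x-m)(t+iY))|=e^{-2\pi(x-m)Y}$ when $m<x$, forces $a_m=0$; this matches the fact that the claimed right‑hand side has no term with $r=m-x\le 0$. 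If $m>x$, put $c\coloneqq m-x>0$; now $\bfe((x-m)s)=e^{-2\pi i c s}$ decays as $\mathrm{Im}(s)\to-\infty$, so I would shift $L_{y_0}$ downward. The only obstruction is the branch cut $(-\infty,0]$ of $s^{-k}$, so (since the auxiliary segments $L_{-N}$ contribute $O(e^{-2\pi cN}N^{1-\mathrm{Re}(k)})\to 0$) the limiting contribution is that of a Hankel loop $\cH$ around $(-\infty,0]$. Unwrapping $\cH$ into the ray above the cut, the ray below it, and the small circle of radius $\delta$ about $0$, the two rays contribute $-2i\sin(\pi k)\int_\delta^\infty r^{-k}e^{2\pi i c r}\,\rmd r$, and the $\delta$‑independent total is computed from the analytic continuation of $\int_0^\infty r^{-k}e^{2\pi i c r}\,\rmd r=\Gamma(1-k)(-2\pi i c)^{k-1}$ together with the reflection formula $\Gamma(k)\Gamma(1-k)=\pi/\sin(\pi k)$, yielding
\[
a_m=\frac{-2\pi i}{\Gamma(k)}\,(-2\pi i c)^{k-1}=\frac{(-2\pi i)^k}{\Gamma(k)}\,(m-x)^{k-1}.
\]
(One must keep the loop around $0$ at a fixed positive radius when $\mathrm{Re}(k)\ge 1$; this is harmless, and equivalently one may first integrate by parts in $s$ to push the exponent of $s^{-k}$ below $1$ before running the deformation, then integrate back.)

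Finally I would assemble the two cases: $G(z)=\frac{(-2\pi i)^k}{\Gamma(k)}\sum_{m\in\ZZ,\,m>x}(m-x)^{k-1}\bfe(mz)$, and dividing by $\bfe(xz)$ and reindexing by $r=m-x\in\ZZ-x$, $r>0$, gives exactly
\[
\sum_{n\in\ZZ}\bfe(nx)(z+n)^{-k}=\frac{(-2\pi i)^k}{\Gamma(k)}\sum_{\substack{r\in\ZZ-x\\ r>0}}r^{k-1}\bfe(rz),
\]
which is the claim. I expect the main obstacle to be the contour deformation across the branch cut of $s^{-k}$ and the attendant branch‑ and orientation‑bookkeeping needed to land on the exact constant $(-2\pi i)^k$ (a convenient consistency check is $x=0,\ k=2$, where both sides reduce to $\sum_n(z+n)^{-2}=(2\pi i)^2\sum_{m\ge 1}m\,\bfe(mz)$); the convergence interchanges are routine given $\mathrm{Re}(k)>1$ and $\mathrm{Im}(z)>0$. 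As an alternative one can phrase the whole computation as Poisson summation applied to $h(t)=\bfe(tx)(z+t)^{-k}$ (the same Fourier‑transform evaluation reappears), and for the half‑integral and integral weights actually used in the paper one could even bypass contour integration by differentiating the $k=1$ case; but the argument above is uniform in $k$.
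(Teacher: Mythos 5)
The paper does not prove this lemma at all: it is quoted from the literature (Lipschitz's original paper and Pasles--Pribitkin, \cite[Thm.~1]{PPA01}) and used as a black box in the Hecke-kernel computation of Appendix C. So there is no in-paper argument to compare against; what you have written is the standard Fourier-analytic proof, and it is essentially correct. Your structure is the classical one: periodize to $G(z)=\sum_n \bfe(x(z+n))(z+n)^{-k}$, expand in $\bfe(mz)$, unfold the Fourier coefficient to $a_m=\int_{L_{y_0}}s^{-k}\bfe((x-m)s)\,\rmd s$, kill the coefficients with $m-x\le 0$ by shifting upward (the bound $O(Y^{1-\mathrm{Re}(k)})$ indeed also covers the borderline case $m=x$), and evaluate the remaining ones by shifting down around the cut; your Hankel bookkeeping gives the ray contribution $-2i\sin(\pi k)\int_{\delta}^{\infty}r^{-k}e^{2\pi i cr}\,\rmd r$ with the correct signs, and combined with $\int_0^\infty r^{-k}e^{2\pi icr}\,\rmd r=\Gamma(1-k)(-2\pi ic)^{k-1}$ and the reflection formula this lands exactly on $a_m=\frac{(-2\pi i)^k}{\Gamma(k)}(m-x)^{k-1}$, which matches the stated right-hand side (and your $k=2$, $x=0$ sanity check is correct). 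Two small technical remarks: for complex $k$ one has $|(z+n)^{-k}|=|z+n|^{-\mathrm{Re}(k)}e^{\mathrm{Im}(k)\arg(z+n)}$, not just $|z+n|^{-\mathrm{Re}(k)}$, but since $|\arg(z+n)|<\pi$ this only changes your convergence estimates by a bounded factor; and, as you yourself flag, for $\mathrm{Re}(k)\ge 1$ the two rays and the circle of radius $\delta$ are individually divergent as $\delta\to0$, so the evaluation must be justified either by keeping $\delta$ fixed and continuing analytically in $k$, or by your integration-by-parts device lowering the exponent before deforming --- either route is standard and closes the argument. In short, your proposal supplies a complete, self-contained proof of a statement the paper only cites.
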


\begin{acknowledgements}
We thank Jan Hendrik Bruinier, Kaiyuan Gu, Xuanlin Huang, Kai-Wen Lan, and Fei Si for helpful conversations and suggestions, and Xinyi Yuan for his interest in this project. We are also grateful to the two anonymous referees for their insightful comments, which helped to improve the introduction of the paper.

\end{acknowledgements}

\bibliographystyle{plain}
\bibliography{main}

\end{document}